\documentclass[11pt]{amsart}
 
\usepackage{fullpage}
\usepackage{amssymb}
\usepackage{amsmath}
\usepackage{amsxtra}
\usepackage{amscd}
\usepackage{graphicx}
\usepackage{amsfonts}
\usepackage{float}
\usepackage{enumerate}
\newtheorem*{theorem*}{Theorem}
\newtheorem{theorem}{Theorem}
\newtheorem{lemma}{Lemma} 
\newtheorem{corollary}{Corollary}
\newtheorem{prop}[theorem]{Proposition}

\newtheorem{definition}{Definition}

\usepackage{graphicx,color}
\usepackage{pstricks-add}
\usepackage{pgf,tikz,pgfplots}
\usepackage{mathrsfs}
\usetikzlibrary{arrows}
\pgfplotsset{compat=1.13}

\numberwithin{equation}{section}

\begin{document}

\title[]
  {Rail knotoids}
\author{Dimitrios Kodokostas}
\address{Department of Mathematics,
National Technical University of Athens,
Zografou campus,
{GR-15780} Athens, Greece.}
\email{dkodokostas@math.ntua.gr}

\author{Sofia Lambropoulou}
\address{Department of Mathematics,
National Technical University of Athens,
Zografou campus,
{GR-15780} Athens, Greece.}
\email{sofia@math.ntua.gr}
\urladdr{http://www.math.ntua.gr/$\sim$sofia}

\subjclass[2010]{57M27, 57M25}
\date{}


\keywords{rail arc, rail isotopy,  rail knotoid diagram, rail knotoid, knotoid handlebody of genus 2}

\begin{abstract} We work on the notions of rail arcs and rail isotopy  in $\mathbb{R}^3$, and we introduce the notions of rail knotoid diagrams and their equivalence. Our main result is that two rail arcs in $\mathbb{R}^3$ are rail isotopic if and only if their knotoid diagram projections  to the plane of two lines which we call rails, are equivalent. We also make a connection between the rail isotopy in $\mathbb{R}^3$ and the knot theory of the handlebody of genus $2$.
\end{abstract}

\dedicatory{Dedicated to Louis H. Kauffman for his 70th birthday}

\maketitle

\section*{Introduction} \label{section_introduction}

We study isotopies in $\mathbb{R}^3$ between arcs  that have their endpoints on two fixed parallel lines (we call them rails), that allow the endpoints to move freely on the rails but do not allow any other point of the arcs to  touch them. We call such arcs as rail arcs and such isotopies among them as rail isotopies.  As we remarked in \cite{KoLa1}, it turns out that rail isotopies are connected to the knot theory of the handlebody of genus~2. Rail arcs and rail isotopies are convenient renamings of what in \cite{GK1} are called open arcs and line isotopies respectively. It was proved in \cite{GK1} that rail isotopies can be studied diagrammatically with the notion of knotoid diagram which is a kind of generalization of  knot diagram. A planar knotoid diagram is what one gets by projecting a rail arc onto a plane perpendicular to the rails (keeping track of over/under data at crossings). 

Here we develop a new diagrammatic setting, which we call rail knotoid diagram, by projecting rail arcs onto the plane of the rails, and we prove that rail isotopy corresponds to (gives rise and comes from) an appropriately defined equivalence between such diagrams. Although we do not make use of any previous result on knotoids, the current article belongs in the general theory about them, and familiarizing with knotoids helps in putting the current work into context. Thus in \S \ref{section_knotoids} we recall some basic facts about knotoids. In \S \ref{section_rail_isotopies} we introduce the basic notions of rail arcs  and rail isotopy between such arcs and we develop the notion of triangle move for studying such isotopies. In \S \ref{section_handlebodies} we remark on the connection of the study of rail isotopy to the study of knot theory in the handlebody of genus~2.
In  \S \ref{rail_knotoids}   we introduce and study rail knotoid diagrams. We define a notion of equivalence between such diagrams and we prove that two rail arcs are rail isotopic if and only if their rail knotoid diagrams are equivalent.

We will be working in the piecewise linear (p.l.) category, thus all curves will be p.l. curves, all maps will be p.l. maps etc.  Due to the usual p.l. approximation theorems for the analogous smooth objects, our results hold in the smooth category as well.

\section{Rail isotopy} \label{section_rail_isotopies}

Henceforth $\mathbb{R}^3$ will be considered equipped with two given parallel lines $\ell_1,\ell_2$ (in this order) which we will call as \textit{rails}. We define:

\begin{definition} \rm
A \textit{rail arc} is any  oriented, connected, embedded arc $c$ in $\mathbb{R}^3$ with its interior in $\mathbb{R}^3-(\ell_1 \cup \ell_2)$, its first endpoint on $\ell_1$ and the last on $\ell_2$. We call two rail arcs  $c_1,c_2$  as \textit{rail isotopic}, if there exists an isotopy  of  $\mathbb{R}^3$ taking one onto the other (thus $c_1,c_2$ are connected by a homotopy of embeddings in $\mathbb{R}^3$), so that each rail maps onto itself (not necessarily pointwise) throughout the isotopy.  In particular, this implies that at each time throughout the isotopy, the image of the arc is a rail arc, and each endpoint remains on the same rail, but with the freedom to move up and down on it. We call such an isotopy as a \textit{rail isotopy} in $\mathbb{R}^3$.
\end{definition}

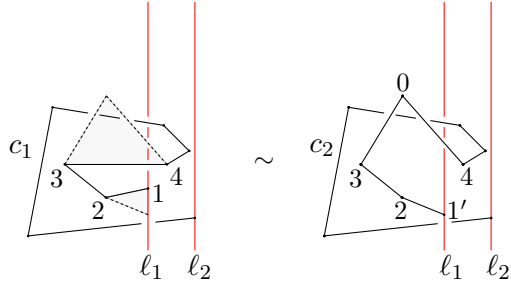
\begin{figure}[!h]
	\centering
	\psset{xunit=1.0cm,yunit=1.0cm,algebraic=true,dimen=middle,dotstyle=o,dotsize=5pt 0,linewidth=1.6pt,arrowsize=3pt 2,arrowinset=0.25}
	\begin{pspicture*}(-0.1039458738052424,0.06116059143102224)(6.715784802180208,3.933288375563131)
	\pspolygon[linewidth=0.pt,linecolor=lightgray,fillcolor=lightgray,fillstyle=solid,opacity=0.1](0.6445315826999247,1.6964695113948716)(1.1979962065470546,2.608469175390208)(2.00322258244179,1.6964695113948716)
	\pspolygon[linewidth=0.pt,linecolor=lightgray,fillcolor=lightgray,fillstyle=solid,opacity=0.1](1.1903889753849768,1.2576045696740883)(1.7507157590559494,1.377248967498395)(1.7507157590559494,1.0287192128094522)
	\psline[linewidth=0.4pt]{->}(-0.6586608091724795,5.749707869804877)(3.278727555591045,5.749707869804877)
	\psline[linewidth=0.4pt]{->}(-0.6586608091724795,5.749707869804877)(-0.6586608091724795,4.350591910714466)
	\psline[linewidth=0.4pt]{->}(-0.6586608091724795,5.749707869804877)(0.8749628356663519,4.738168870017557)
	\psline[linewidth=0.4pt,linestyle=dashed,dash=1pt 1pt](0.6445315826999247,1.6964695113948716)(1.1979962065470546,2.608469175390208)
	\psline[linewidth=0.4pt,linestyle=dashed,dash=1pt 1pt](1.1979962065470546,2.608469175390208)(2.00322258244179,1.6964695113948716)
	\psline[linewidth=0.4pt](0.6445315826999247,1.6964695113948716)(2.00322258244179,1.6964695113948716)
	\psline[linewidth=0.4pt](0.6445315826999247,1.6964695113948716)(1.1903889753849768,1.2576045696740883)
	\psline[linewidth=0.4pt](2.00322258244179,1.6964695113948716)(2.2979448314727335,1.8773675685835884)
	\psline[linewidth=0.4pt](2.2979448314727335,1.8773675685835884)(1.9586628065743275,2.216649593481997)
	\psline[linewidth=0.4pt](0.48114431104901,2.457430385345384)(0.15709644872051595,0.7463966880611413)
	\psline[linewidth=0.4pt,linecolor=red](1.7507157590559494,3.8583368107323617)(1.7507157590559494,2.332738656267294)
	\psline[linewidth=0.4pt,linecolor=red](1.7507157590559494,2.1947604305853257)(1.7507157590559494,2.05248087175696)
	\psline[linewidth=0.4pt,linecolor=red](1.7507157590559494,1.6146976138235303)(1.7507157590559494,0.5530732133349612)
	\psline[linewidth=0.4pt,linecolor=red](1.7507157590559494,1.9119991082678132)(1.7507157590559494,1.7552530021503596)
	\psline[linewidth=0.4pt](0.48114431104901,2.457430385345384)(0.9528997528226542,2.380551720760049)
	\psline[linewidth=0.4pt](1.1054122664252921,2.3556978296544338)(1.3771607537773767,2.311412890974834)
	\psline[linewidth=0.4pt](1.567414459270753,2.2804085834129504)(1.9586628065743275,2.216649593481997)
	\psline[linewidth=0.4pt,linecolor=red](2.3745569016110837,3.8583368107323617)(2.3745569016110837,0.5530732133349612)
	\psline[linewidth=0.4pt](4.581919947463449,1.6964695113948716)(5.1353845713105795,2.608469175390208)
	\psline[linewidth=0.4pt](5.1353845713105795,2.608469175390208)(5.940610947205315,1.6964695113948716)
	\psline[linewidth=0.4pt](4.581919947463449,1.6964695113948716)(5.127777340148501,1.2576045696740883)
	\psline[linewidth=0.4pt](5.940610947205315,1.6964695113948716)(6.235333196236258,1.8773675685835884)
	\psline[linewidth=0.4pt](6.235333196236258,1.8773675685835884)(5.896051171337852,2.216649593481997)
	\psline[linewidth=0.4pt](4.418532675812535,2.457430385345384)(4.094484813484041,0.7463966880611413)
	\psline[linewidth=0.4pt,linecolor=red](5.688104123819474,3.8583368107323617)(5.688104123819474,2.332738656267294)
	\psline[linewidth=0.4pt,linecolor=red](5.688104123819474,2.1947604305853257)(5.688104123819474,2.05248087175696)
	\psline[linewidth=0.4pt](4.418532675812535,2.457430385345384)(4.890288117586179,2.380551720760049)
	\psline[linewidth=0.4pt](5.0428006311888165,2.3556978296544338)(5.314549118540901,2.311412890974834)
	\psline[linewidth=0.4pt](5.504802824034278,2.2804085834129504)(5.896051171337852,2.216649593481997)
	\psline[linewidth=0.4pt,linecolor=red](6.311945266374608,3.8583368107323617)(6.311945266374608,0.5530732133349612)
	\psline[linewidth=0.4pt,linecolor=red](5.688104123819474,1.9119991082678132)(5.688104123819474,0.5530732133349612)
	\psline[linewidth=0.4pt](0.15709644872051595,0.7463966880611413)(1.6643261105468938,0.9090436340637899)
	\psline[linewidth=0.4pt](1.8494231157593275,0.929017672111554)(2.3745569016110837,0.9856854837097546)
	\psline[linewidth=0.4pt](4.094484813484041,0.7463966880611413)(5.601714475310418,0.9090436340637899)
	\psline[linewidth=0.4pt](5.786811480522852,0.929017672111554)(6.311945266374608,0.9856854837097546)
	\psline[linewidth=0.4pt](5.127777340148501,1.2576045696740883)(5.688104123819474,1.0287192128094522)
	\psline[linewidth=0.4pt,linestyle=dashed,dash=1pt 1pt](1.1903889753849768,1.2576045696740883)(1.7507157590559494,1.0287192128094522)
	\begin{scriptsize}
	\psdots[dotsize=1pt 0,dotstyle=*](-0.6586608091724795,5.749707869804877)
	\psdots[dotsize=1pt 0,dotstyle=*](3.278727555591045,5.749707869804877)
	\psdots[dotsize=1pt 0,dotstyle=*](-0.6586608091724795,4.350591910714466)
	\psdots[dotsize=1pt 0,dotstyle=*](0.8749628356663519,4.738168870017557)
	\psdots[dotsize=1pt 0,dotstyle=*](0.6445315826999247,1.6964695113948716)
	\psdots[dotsize=1pt 0,dotstyle=*](2.00322258244179,1.6964695113948716)
	\psdots[dotsize=1pt 0,dotstyle=*](1.1903889753849768,1.2576045696740883)
	\psdots[dotsize=1pt 0,dotstyle=*](2.2979448314727335,1.8773675685835884)
	\psdots[dotsize=1pt 0,dotstyle=*](1.9586628065743275,2.216649593481997)
	\psdots[dotsize=1pt 0,dotstyle=*](0.48114431104901,2.457430385345384)
	\psdots[dotsize=1pt 0,dotstyle=*](0.15709644872051595,0.7463966880611413)
	\psdots[dotsize=1pt 0,dotstyle=*](4.581919947463449,1.6964695113948716)
	\psdots[dotsize=1pt 0,dotstyle=*](5.1353845713105795,2.608469175390208)
	\psdots[dotsize=1pt 0,dotstyle=*](5.940610947205315,1.6964695113948716)
	\psdots[dotsize=1pt 0,dotstyle=*](4.581919947463449,1.6964695113948716)
	\psdots[dotsize=1pt 0,dotstyle=*](5.1353845713105795,2.608469175390208)
	\psdots[dotsize=1pt 0,dotstyle=*](5.1353845713105795,2.608469175390208)
	\psdots[dotsize=1pt 0,dotstyle=*](5.940610947205315,1.6964695113948716)
	\psdots[dotsize=1pt 0,dotstyle=*](4.581919947463449,1.6964695113948716)
	\psdots[dotsize=1pt 0,dotstyle=*](5.940610947205315,1.6964695113948716)
	\psdots[dotsize=1pt 0,dotstyle=*](4.581919947463449,1.6964695113948716)
	\psdots[dotsize=1pt 0,dotstyle=*](5.127777340148501,1.2576045696740883)
	\psdots[dotsize=1pt 0,dotstyle=*](5.940610947205315,1.6964695113948716)
	\psdots[dotsize=1pt 0,dotstyle=*](6.235333196236258,1.8773675685835884)
	\psdots[dotsize=1pt 0,dotstyle=*](6.235333196236258,1.8773675685835884)
	\psdots[dotsize=1pt 0,dotstyle=*](5.896051171337852,2.216649593481997)
	\psdots[dotsize=1pt 0,dotstyle=*](4.418532675812535,2.457430385345384)
	\psdots[dotsize=1pt 0,dotstyle=*](4.094484813484041,0.7463966880611413)
	\psdots[dotsize=1pt 0,dotstyle=*](4.418532675812535,2.457430385345384)
	\psdots[dotsize=1pt 0,dotstyle=*](5.896051171337852,2.216649593481997)
	\psdots[dotsize=1pt 0,dotstyle=*](5.127777340148501,1.2576045696740883)
	\psdots[dotsize=1pt 0,dotstyle=*](2.3745569016110837,0.9856854837097546)
	\psdots[dotsize=1pt 0,dotstyle=*](1.7507157590559494,1.377248967498395)
	\psdots[dotsize=1pt 0,dotstyle=*](4.094484813484041,0.7463966880611413)
	\psdots[dotsize=1pt 0,dotstyle=*](6.311945266374608,0.9856854837097546)
	\psdots[dotsize=1pt 0,dotstyle=*](5.127777340148501,1.2576045696740883)
	\psdots[dotsize=1pt 0,dotstyle=*](5.688104123819474,1.0287192128094522)
	\end{scriptsize}\small
	\rput[tl](1.8,1.42){$1$}
	\rput[tl](1.0,1.2){$2$}
	\rput[tl](5.0529875305472896,2.9){$0$}
	\psline[linewidth=0.4pt](1.1903889753849768,1.2576045696740883)(1.7507157590559494,1.377248967498395)
	\rput[tl](0.45,1.63){$3$}
	\rput[tl](2.04,1.65){$4$}
	\rput[tl](5.725343012737813,1.2333491421717862){$1'$}
	\rput[tl](5.023754683495527,1.2){$2$}
	\rput[tl](4.427404603639585,1.63){$3$}
	\rput[tl](5.9,1.65){$4$}
	\rput[tl](3.159083157766739,1.8){$\sim$}
	\normalsize
	\rput[tl](1.636336276366481,0.5){$\ell_1$}
	\rput[tl](2.278065319242304,0.5){$\ell_2$}
	\rput[tl](5.628108458322873,0.5){$\ell_1$}
	\rput[tl](6.25,0.5){$\ell_2$}
	\rput[tl](-0.1,2.016789443159007){$c_1$}
	\rput[tl](3.9,2.034329151390064){$c_2$}
	\end{pspicture*}
	\caption{Two triangle moves in $\mathbb{R}^3$ which are compatible for the rail arc $c_1$ and  transform it to  the rail arc $c_2$.}
	\label{figure_triangle_moves}
\end{figure}

Similarly to the case of isotopy between p.l. knots in $\mathbb{R}^3$,  rail isotopy between p.l. rail arcs can be effected via a finite sequence of  \textit{triangle moves} or \textit{elementary moves}: a rail arc $c$ is modified so that it either replaces its edge $AB$ by two new edges $AC,CB$, or vice versa, where the triangle $\Delta=ABC$ does not intersect $c$ or the rails at any other point (see Figure \ref{figure_triangle_moves}) or else it replaces edge $AB$ with $CB$, where $A,C$ lie on the same rail and the triangle $\Delta=ABC$ has  no other common points with the arc $c$, and no other common point with the rails other than segment $AC$. When it will be necessary to distinguish the second kind of move from the first one, we will be calling it as a \textit{space slide move}. Each triangle move, which from now on will be denoted $\Delta$ as the triangle itself, is actually the restriction on an edge (or on two consecutive edges) of the result of an isotopy $h_\Delta$ in $\mathbb{R}^3$ which fixes all points in the complement of the interior of a closed $3$-ball neighborhood of $\Delta$ (with the vertices of $\Delta$ on the boundary of the ball if we wish). This can readily be made to keep the rails fixed (although not necessarily pointwise) which means it is a rail isotopy. So instead of rail isotopies between arcs, we can indistinguishably use, if we wish, triangle moves. We talk in general about triangle moves in $\mathbb{R}^3$ without any reference to rail arcs, as long as the triangles $ABC $ do not intersect the rails as explained above. For a triangle move $\Delta$ replacing edge $AB$ by edges $BC,CA$, its \textit{inverse} move $\Delta^{-1}$ is the triangle move replacing edges $AC,CB$ by edge $AB$; the inverse of $\Delta^{-1}$ is $\Delta$.

If $\Delta_1,\ldots,\Delta_k$ are the triangles for a sequence of triangle moves in $\mathbb{R}^3$ and $h_{\Delta_1},\ldots, h_{\Delta_k}$ corresponding isotopies in space, instead of writing $h_{\Delta_k}\circ \cdots \circ h_{\Delta_1}$ we can write $\Delta_k\circ \cdots \circ \Delta_1$, and say that $\Delta_k\circ \cdots \circ \Delta_1$ is the \textit{composite move} of the triangle moves $\Delta_1, \ldots, \Delta_k$ (in this order). We call the last ones as \textit{submoves} of the former. If $\Delta_1,\ldots,\Delta_k$ are the triangles for a sequence of triangle moves for getting $c_2$ from $c_1$, we say $\Delta=\Delta_k\circ \cdots \circ \Delta_1$ is a \textit{composite move} for $c_1$ and  write $c_1 \overset{\Delta}{\sim} c_2$. Let us note that a triangle move in $\mathbb{R}^3$ performed in an edge of a rail arc is not necessary a triangle move of the arc itself as the triangle of the move may interfere with the rest of the arc. Similarly, whenever we get a rail arc $c_2$ from another rail arc $c_1$ via a sequence of triangle moves $\Delta_1,\ldots,\Delta_k$, it is not always true that we can replace any $\Delta_i$ by other moves $\delta_j$ in $\mathbb{R}^3$ that compose to it and still claim that we get $c_2$ form $c_1$, since for example the $\delta_j$'s may interfere with the rest of $(\Delta_{i-1}\circ\cdots\circ \Delta_1)(c_1)$.  But clearly, we can do so in case the triangles of the $\delta_j$'s are just part of $\Delta_i$. Also, two moves on distinct edges of an arc $c$ whose triangles have no common points other than possibly a common vertex of $c$, can be performed in either order and we can think of them as being performed simultaneously. We call two such moves as \textit{compatible for $c$} and extend the definition to any finite number of moves when each is performed on its own edge or pair of edges of $c$ and any two of their triangles have nothing else in common other than possibly a vertex of $c$. Such moves can be performed in any order; rigorously, this means that their composition is defined in any order, and the result is always the same.

 Finally, applying to an arc a sequence of moves, and then performing in reverse order their inverses, we return to the original arc. Thus:

\begin{lemma}\label{lemma_triangle_moves}
	For triangle moves $\Delta_i$ and rail arcs $c_1,c_2$ so that $c_1 \overset{\Delta_k \circ \cdots \circ  \Delta_1}{\sim} c_2$, the following are true:
	\begin{equation*}\label{equation_composition}
	\begin{split} 
	 \hspace{-9ex} & \bullet \left[  \text{For some } i, \Delta_i=\delta_l \circ \cdots \circ  \delta_1, \text{ and } \ \delta_j \subseteq \Delta_i, \forall j \right]   \Rightarrow c_1 \overset{\Delta_k \circ \cdots \circ \left( \delta_l \circ \cdots \circ  \delta_1 \right)  \circ \cdots \circ \Delta_1}{\sim} c_2 \\
	 \hspace{-9ex}	& \bullet \left[ \Delta_1,\ldots, \Delta_k \text{ compatible for } c_1 \text{ and } \tau \text{ is a permutation of } \{1,\ldots,k \} \right] \Rightarrow c_1 \overset{\Delta_{\tau(1)} \circ \cdots \circ \Delta_{\tau(k)}}{\sim} c_2  \\
	  \hspace{-9ex}	& \bullet  c_2 \overset{\Delta_1^{-1} \circ \cdots \circ \Delta_k^{-1}}{\sim} c_1  
		\end{split}
	\end{equation*}\normalsize 
\end{lemma}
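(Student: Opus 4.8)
The plan is to verify each of the three bullets in turn; all three merely formalize observations already recorded in the discussion preceding the lemma, so the work is bookkeeping rather than a new idea. Throughout I use the description of a triangle move $\Delta$ as the restriction to an edge (or to a pair of consecutive edges) of an ambient isotopy $h_\Delta$ of $\mathbb{R}^3$ supported inside a small closed ball around the triangle $\Delta$ and keeping the rails fixed (setwise).

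For the first bullet, suppose $\Delta_i$ replaces the edge $AB$ of the arc $c' := (\Delta_{i-1}\circ\cdots\circ\Delta_1)(c_1)$, and that $\Delta_i = \delta_l\circ\cdots\circ\delta_1$ with every $\delta_j$ contained in the triangle $\Delta_i$. Performing the $\delta_j$ one at a time amounts to pushing $AB$ across $\Delta_i$ in stages, the current polygonal path together with the next sub-triangle always lying inside $\Delta_i$. Since $\Delta_i$, being a legitimate move for $c'$, meets $c'$ only along $AB$ and meets the rails only as permitted, the same restriction holds for each $\delta_j$ against the corresponding intermediate arc. Hence every $\delta_j$ is a valid triangle move at its stage, every intermediate arc is a rail arc, and $(\delta_l\circ\cdots\circ\delta_1)(c') = \Delta_i(c')$. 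Splicing this replacement into the original sequence yields $c_1 \overset{\Delta_k\circ\cdots\circ(\delta_l\circ\cdots\circ\delta_1)\circ\cdots\circ\Delta_1}{\sim} c_2$.

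For the second bullet, compatibility of $\Delta_1,\ldots,\Delta_k$ for $c_1$ means their triangles are pairwise disjoint except possibly at vertices of $c_1$, and those vertices are fixed throughout each move; so we may choose the supporting balls, and hence the isotopies $h_{\Delta_1},\ldots,h_{\Delta_k}$, to have pairwise disjoint supports. Disjointly supported isotopies commute, so $h_{\Delta_{\tau(1)}}\circ\cdots\circ h_{\Delta_{\tau(k)}} = h_{\Delta_1}\circ\cdots\circ h_{\Delta_k}$ as maps of $\mathbb{R}^3$; in particular they agree on $c_1$ and send it to $c_2$. One also checks that performing the moves in the order $\tau$ keeps every intermediate object a rail arc: each $\Delta_j$ acts only on its own edge of $c_1$, leaving the other edges untouched, and those edges together with their (disjoint) triangles remain unaffected, so each remaining move stays valid no matter the order. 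For the third bullet, put $h := h_{\Delta_k}\circ\cdots\circ h_{\Delta_1}$, so $h(c_1)=c_2$; then $h^{-1} = h_{\Delta_1}^{-1}\circ\cdots\circ h_{\Delta_k}^{-1}$ sends $c_2$ to $c_1$, and $h_{\Delta_j}^{-1}=h_{\Delta_j^{-1}}$ is precisely the ambient isotopy realizing the inverse move. Reading $\Delta_1^{-1}\circ\cdots\circ\Delta_k^{-1}$ from right to left: applying $\Delta_k^{-1}$ to $c_2=\Delta_k\bigl((\Delta_{k-1}\circ\cdots\circ\Delta_1)(c_1)\bigr)$ simply undoes the last move, which is legitimate since $\Delta_k^{-1}$ is a valid move for $c_2$ exactly when $\Delta_k$ is a valid move for $(\Delta_{k-1}\circ\cdots\circ\Delta_1)(c_1)$, as hypothesized; iterating recovers $c_1$, so $c_2 \overset{\Delta_1^{-1}\circ\cdots\circ\Delta_k^{-1}}{\sim} c_1$.

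The only genuine (and mild) obstacle lies in the first bullet: one must be careful that a triangulation of $\Delta_i$ really does permit pushing the edge across sub-triangle by sub-triangle through rail arcs, i.e.\ that none of the intermediate polygonal paths escapes $\Delta_i$ and hence none collides with the rest of the arc or with the rails. Once that is granted, everything follows from the ball-supported-isotopy model of triangle moves together with the elementary fact that disjointly supported homeomorphisms commute.
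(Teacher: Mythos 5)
Your argument is correct and takes essentially the same route as the paper, which offers no separate proof but states the lemma as a direct consequence of the observations in the preceding paragraph (sub-moves whose triangles lie inside $\Delta_i$ cannot interfere with the rest of the arc, compatible moves may be performed in any order, and inverse moves performed in reverse order undo the sequence). The only slight overstatement is your claim that compatible moves admit pairwise disjoint supporting balls --- this fails when two triangles share a vertex of $c_1$, as the definition allows --- but the edge-by-edge argument you give alongside it covers that case, at the same level of rigor as the paper's own discussion.
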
\normalsize

Any rail arc without its endpoints lies in $\mathbb{R}^3-(\ell_1 \cup \ell_2)$ and  we call it as \textit{open rail arc}. The boundary of an open rail arc as a subset of $\mathbb{R}^3$ consists of two points, one on $\ell_1$ and the other on $\ell_2$. We call an isotopy of $\mathbb{R}^3-(\ell_1 \cup \ell_2)$ as \textit{rail isotopy}, if it moves one open rail arc onto another, keeping at all times the image of the open rail arc as an open rail arc (the boundary points of the images of the rail arc considered as subsets of $\mathbb{R}^3$,  are onto the rails). 

Clearly, the definitions imply that each rail arc corresponds to a unique open rail arc and vice versa (the first lies inside $\mathbb{R}^3$, and the second inside $\mathbb{R}^3-(\ell_1 \cup \ell_2)$), and also that a rail isotopy of rail arcs gives rise to a rail isotopy of open rail arcs and vice versa (the first takes place inside $\mathbb{R}^3$, and the second inside $\mathbb{R}^3-(\ell_1 \cup \ell_2)$).

\section{Rail isotopy and handlebodies} \label{section_handlebodies}

 The space $\mathbb{R}^3-(\ell_1 \cup \ell_2)$ is homeomorphic to the interior $\left( \mathcal{H}_2\right)^o $ of a handlebody $\mathcal{H}_2=A \times [0,1]$ of genus 2, where $A$ is an annular thickening of a figure eight plane curve, with boundary three circles, say $c_0,c_1,c_2$.  Let $h:\mathbb{R}^3-(\ell_1 \cup \ell_2)\rightarrow \left( \mathcal{H}_2\right)^o$ be a homeomorphism of the two manifolds, sending points close to $\ell_i$ to points close to $c_i \times [0,1]$. Then it is meaningful to repeat the definitions for rails, rail arcs, open rail arcs and rail isotopies, replacing (i) $\mathbb{R}^3$ by $\mathcal{H}_2-\left(  (c_0 \times [0,1]) \cup (A\times 0) \cup (A \times 1) \right) $, (ii) $\mathbb{R}^3-(\ell_1 \cup \ell_2)$ by $\left( \mathcal{H}_2\right)^o $ and (iii)   $\ell_i$ by $c_i\times [0,1]$ for $ i=1,2$.
 
 \begin{figure}[!h]
 	\centering
 	\psset{xunit=1.0cm,yunit=1.0cm,algebraic=true,dimen=middle,dotstyle=o,dotsize=5pt 0,linewidth=1.6pt,arrowsize=3pt 2,arrowinset=0.25}
 	\begin{pspicture*}(3.54009372196738,2.091493423557577)(11,7.3865635576372926)
 	\psaxes[labelFontSize=\scriptstyle,xAxis=true,yAxis=true,Dx=0.5,Dy=0.5,ticksize=-2pt 0,subticks=2]{->}(0,0)(3.54009372196738,2.091493423557577)(9.201685647360373,7.3865635576372926)
 	\psline[linewidth=0.8pt](3.8430382495594717,3.433065009275547)(3.8430382495594717,6.089371484643429)
 	\psline[linewidth=0.8pt](7.9723189951167654,3.433065009275547)(7.9723189951167654,6.089371484643429)
 	\parametricplot[linewidth=0.8pt]{1.3048037387004574}{3.4944870324672994}{1.*0.22605357698322218*cos(t)+0.*0.22605357698322218*sin(t)+4.664694613240124|0.*0.22605357698322218*cos(t)+1.*0.22605357698322218*sin(t)+4.961491675954828}
 	\parametricplot[linewidth=0.8pt,linestyle=dotted]{1.3992565315392431}{1.5210216797027574}{1.*2.7554543609281943*cos(t)+0.*2.7554543609281943*sin(t)+4.741308651960328|0.*2.7554543609281943*cos(t)+1.*2.7554543609281943*sin(t)+2.439896659641171}
 	\parametricplot[linewidth=0.8pt]{-1.231600170046141}{0.4156879139013022}{1.*0.5406984685006947*cos(t)+0.*0.5406984685006947*sin(t)+6.574629249428455|0.*0.5406984685006947*cos(t)+1.*0.5406984685006947*sin(t)+4.942736449718199}
 	\parametricplot[linewidth=0.8pt,linestyle=dotted]{1.5214082662210313}{1.9504817421980185}{1.*0.8816485718272703*cos(t)+0.*0.8816485718272703*sin(t)+6.920840689547784|0.*0.8816485718272703*cos(t)+1.*0.8816485718272703*sin(t)+4.31136484443712}
 	\parametricplot[linewidth=0.8pt]{4.772453968767467}{5.1947208373222304}{1.*3.132986786053057*cos(t)+0.*3.132986786053057*sin(t)+4.110214474216897|0.*3.132986786053057*cos(t)+1.*3.132986786053057*sin(t)+7.300980158359168}
 	\parametricplot[linewidth=0.8pt]{1.1953744642837782}{1.4015376813291982}{1.*3.4560472019835107*cos(t)+0.*3.4560472019835107*sin(t)+4.783774349999232|0.*3.4560472019835107*cos(t)+1.*3.4560472019835107*sin(t)+1.7112205469269897}
 	\parametricplot[linewidth=0.8pt]{1.1436894175271268}{4.826292135688437}{1.*0.12319058887240927*cos(t)+0.*0.12319058887240927*sin(t)+4.284282679922288|0.*0.12319058887240927*cos(t)+1.*0.12319058887240927*sin(t)+4.296035592177741}
 	\parametricplot[linewidth=0.8pt]{4.417783347125596}{5.8251760246802755}{1.*0.39564394901207567*cos(t)+0.*0.39564394901207567*sin(t)+4.450193233080985|0.*0.39564394901207567*cos(t)+1.*0.39564394901207567*sin(t)+4.78675803508732}
 	\parametricplot[linewidth=0.8pt]{0.021067184328270528}{1.8077082372963895}{1.*0.2855361301616859*cos(t)+0.*0.2855361301616859*sin(t)+4.519587171071026|0.*0.2855361301616859*cos(t)+1.*0.2855361301616859*sin(t)+4.605803747820963}
 	\parametricplot[linewidth=0.8pt]{4.107668549026615}{5.224867179713746}{1.*0.3788437659190997*cos(t)+0.*0.3788437659190997*sin(t)+6.568773680919235|0.*0.3788437659190997*cos(t)+1.*0.3788437659190997*sin(t)+4.763020140588033}
 	\parametricplot[linewidth=0.8pt]{5.681727366699875}{5.740076894650666}{1.*5.788259093820678*cos(t)+0.*5.788259093820678*sin(t)+1.5809025154846623|0.*5.788259093820678*cos(t)+1.*5.788259093820678*sin(t)+7.726618532262496}
 	\parametricplot[linewidth=0.8pt]{4.147288176279135}{4.674148774858113}{1.*0.6780348524712857*cos(t)+0.*0.6780348524712857*sin(t)+6.562190006289699|0.*0.6780348524712857*cos(t)+1.*0.6780348524712857*sin(t)+5.412787618893904}
 	\parametricplot[linewidth=0.8pt]{5.121467949776788}{5.419233131648573}{1.*3.457582585561001*cos(t)+0.*3.457582585561001*sin(t)+4.188135209050859|0.*3.457582585561001*cos(t)+1.*3.457582585561001*sin(t)+7.69770697552369}
 	\psline[linewidth=0.8pt](4.805059939360503,6.089371484643429)(4.805059939360503,5.420283349334116)
 	\psline[linewidth=0.8pt](4.805059939360503,3.433065009275547)(4.805059939360503,4.210672185820206)
 	\psline[linewidth=0.8pt](4.805059939360503,5.272167696658945)(4.805059939360503,4.340273381910983)
 	\psline[linewidth=0.8pt](5.27908911177121,6.089371484643429)(5.27908911177121,5.414111863805983)
 	\psline[linewidth=0.8pt](5.27908911177121,5.210452841377621)(5.27908911177121,4.482217549058021)
 	\psline[linewidth=0.8pt](5.27908911177121,4.327930410854716)(5.27908911177121,3.433065009275547)
 	\psline[linewidth=0.8pt](6.536268132905109,6.089371484643429)(6.536268132905109,5.3832544361653225)
 	\psline[linewidth=0.8pt](6.536268132905109,3.433065009275547)(6.536268132905109,4.303244468742188)
 	\psline[linewidth=0.8pt](6.536268132905109,4.439017150361096)(6.536268132905109,5.191938384793224)
 	\psline[linewidth=0.8pt](7.010297305315627,6.089371484643429)(7.010297305315627,5.5)
 	\psline[linewidth=0.8pt](7.010297305315627,4.5)(7.010297305315627,3.433065009275547)
 	\psline[linewidth=0.8pt](7.010297305315627,5.278339182187075)(7.010297305315627,4.698219542542648)
 	\parametricplot[linewidth=0.8pt,linestyle=dotted]{0.0}{3.141592653589793}{1.*2.0646403727786478*cos(t)+0.*0.8781148250354871*sin(t)+5.907678622338119|0.*2.0646403727786478*cos(t)+1.*0.8781148250354871*sin(t)+3.433065009275547}
 	\parametricplot[linewidth=0.8pt]{-3.141592653589793}{0.0}{1.*2.0646403727786478*cos(t)+0.*0.8781148250354871*sin(t)+5.907678622338119|0.*2.0646403727786478*cos(t)+1.*0.8781148250354871*sin(t)+3.433065009275547}
 	\parametricplot[linewidth=0.8pt,linestyle=dotted]{0.0}{3.141592653589793}{1.*0.23701458620536614*cos(t)+0.*0.18736480493772018*sin(t)+5.042074525565857|0.*0.23701458620536614*cos(t)+1.*0.18736480493772018*sin(t)+3.4330650092755475}
 	\parametricplot[linewidth=0.8pt]{-3.141592653589793}{0.0}{1.*0.23701458620536614*cos(t)+0.*0.18736480493772018*sin(t)+5.042074525565857|0.*0.23701458620536614*cos(t)+1.*0.18736480493772018*sin(t)+3.4330650092755475}
 	\parametricplot[linewidth=0.8pt,linestyle=dotted]{0.0}{3.141592653589793}{1.*0.2370145862052594*cos(t)+0.*0.1873648049376358*sin(t)+6.773282719110368|0.*0.2370145862052594*cos(t)+1.*0.1873648049376358*sin(t)+3.4330650092755475}
 	\parametricplot[linewidth=0.8pt]{-3.141592653589793}{0.0}{1.*0.2370145862052594*cos(t)+0.*0.1873648049376358*sin(t)+6.773282719110368|0.*0.2370145862052594*cos(t)+1.*0.1873648049376358*sin(t)+3.4330650092755475}
 	\psline[linewidth=0.8pt,linecolor=red](5.0420745255658606,5.778229509965787)(5.0420745255658606,5.377082950637194)
 	\psline[linewidth=0.8pt,linecolor=red](5.0420745255658606,5.216624326905757)(5.0420745255658606,4.401988237192306)
 	\psline[linewidth=0.8pt,linecolor=red](5.0420745255658606,3.180034102622129)(5.0420745255658606,2.7727160577654035)
 	\psline[linewidth=0.8pt,linecolor=red](5.0420745255658606,2.5)(5.0420745255658606,2.2666542444585627)
 	\psline[linewidth=0.8pt,linecolor=red](6.77328271911038,5.809086937606448)(6.77328271911038,5.407940378277855)
 	\psline[linewidth=0.8pt,linecolor=red](6.77328271911038,5.235138783490154)(6.77328271911038,4.556275375395611)
 	\psline[linewidth=0.8pt,linecolor=red](6.77328271911038,3.1553481605096003)(6.77328271911038,2.8961457683280476)
 	\psline[linewidth=0.8pt,linecolor=red](6.77328271911038,2.519685151111983)(6.77328271911038,2.2419683023460335)
 	\psline[linewidth=0.8pt,linecolor=red](5.0420745255658606,7.339773541724954)(5.0420745255658606,6.089371484643429)
 	\psline[linewidth=0.8pt,linecolor=red](6.77328271911038,7.347571877710344)(6.77328271911038,6.089371484643429)
 	\psline[linewidth=0.8pt,linecolor=red](5.0420745255658606,4.2723870411015294)(5.0420745255658606,3.433065009275547)
 	\psline[linewidth=0.8pt,linecolor=red](6.77328271911038,4.364959324023513)(6.77328271911038,3.433065009275547)
 	\parametricplot[linewidth=0.8pt]{-4.420169340701285}{1.2771690642823572}{1.*0.2370145862053656*cos(t)+0.*0.18736480493771993*sin(t)+5.042074525565866|0.*0.2370145862053656*cos(t)+1.*0.18736480493771993*sin(t)+6.089371484643429}
 	\parametricplot[linewidth=0.8pt]{-4.3802177485623925}{1.2453923791926385}{1.*0.23701458620533894*cos(t)+0.*0.1873648049376988*sin(t)+6.7732827191103695|0.*0.23701458620533894*cos(t)+1.*0.1873648049376988*sin(t)+6.089371484643429}
 	\parametricplot[linewidth=0.8pt]{-4.214887167162374}{1.0646583645245382}{1.*2.0646403727786655*cos(t)+0.*0.8781148250354948*sin(t)+5.907678622338119|0.*2.0646403727786655*cos(t)+1.*0.8781148250354948*sin(t)+6.089371484643429}
 	\parametricplot[linewidth=0.8pt]{1.193222464394093}{1.9600639305605871}{1.*2.0646403727786655*cos(t)+0.*0.8781148250354948*sin(t)+5.907678622338119|0.*2.0646403727786655*cos(t)+1.*0.8781148250354948*sin(t)+6.089371484643429}
 	\rput[tl](3.9,2.9){$c_0$}
 	\rput[tl](4.6,3.25){$c_1$}
 	\rput[tl](6.9,3.25){$c_2$}
 	\rput[tl](8.016338577581124,5.078256105961894){$\mathcal{H}_2=A\times[0,1]$}
 	\rput[tl](5.801611157730421,3.261243821366056){$A$}
 	\rput[tl](4.1,5.093852777932673){$\gamma$}
 	\rput[tl](4.7,2.45){$\ell_1$}
 	\rput[tl](6.82,2.45){$\ell_2$}
 	\begin{scriptsize}
 	\psdots[dotsize=2pt 0,dotstyle=*](4.805059939360503,4.611818745148797)
 	\psdots[dotsize=2pt 0,dotstyle=*](6.536268132905109,4.735248455711441)
 	\end{scriptsize}
 	\end{pspicture*}
 	\caption{The handlebody $\mathcal{H}_2$ of genus 2 standardly embedded in $\mathbb{R}^3$, a knot in $\mathcal{H}_2$ with two points on its boundary, and two `rail' lines $\ell_1,\ell_2$ in $\mathbb{R}^3$ through the two `holes' of the handlebody.}
 	\label{figure_handlebody_of_genus_2}
 \end{figure}
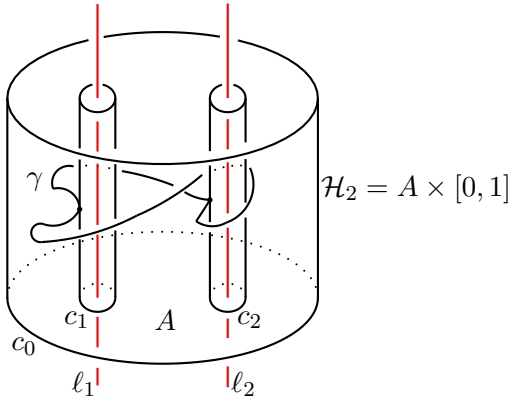

With these definitions, rail isotopy in $\mathcal{H}_2-\left(  (c_0 \times [0,1]) \cup (A\times 0) \cup (A \times 1) \right) $ corresponds by $h^{-1}$ to rail isotopy in $\mathbb{R}^3-(\ell_1 \cup \ell_2)$. But the first rail isotopy, clearly corresponds (gives rise in the natural way, and vice versa) to rail isotopy in $\mathcal{H}_2$, where now we allow the isotopies to touch the part $(c_0 \times [0,1]) \cup (A\times 0) \cup (A \times 1)$ of the boundary of the handlebody of genus two, retaining the rest of the above mentioned properties about keeping endpoints on the cylinders $c_i \times [0,1]$. And as already said, the second rail isotopy above, corresponds (gives rise in a natural way, and vice versa) to the rail isotopy of $\mathbb{R}^3$. Summarizing: 	

\begin{prop}
	Rail isotopies in $\mathcal{H}_2$ are in one to one correspondence  with rail isotopies in $\mathbb{R}^3$.
\end{prop}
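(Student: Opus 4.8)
The plan is to obtain the asserted correspondence as a composition of four elementary ones, three of which are already available from the preceding discussion, so that the only real work lies in the fourth. Throughout I abbreviate $\partial_0:=(c_0\times[0,1])\cup(A\times 0)\cup(A\times 1)$ and $M:=\mathcal{H}_2-\partial_0$, so that $M$, its interior $(\mathcal{H}_2)^o$, and the cylinders $c_1\times[0,1]$, $c_2\times[0,1]$ play inside $\mathcal{H}_2$ exactly the roles that $\mathbb R^3$, $\mathbb R^3-(\ell_1\cup\ell_2)$ and the rails $\ell_1,\ell_2$ play in space. I read the statement as saying that rail arcs in $\mathcal{H}_2$ and rail arcs in $\mathbb R^3$ are matched so that the two rail isotopy relations correspond; concretely, I want to traverse, by natural mutually inverse correspondences on isotopy classes, the chain \textbf{(i)} rail isotopy in $\mathbb R^3$; \textbf{(ii)} rail isotopy of open rail arcs in $\mathbb R^3-(\ell_1\cup\ell_2)$; \textbf{(iii)} rail isotopy of open rail arcs in $(\mathcal{H}_2)^o$; \textbf{(iv)} rail isotopy of rail arcs in $M$; \textbf{(v)} rail isotopy in $\mathcal{H}_2$.

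First I would dispose of the three easy links. The link \textbf{(i)}$\leftrightarrow$\textbf{(ii)} is precisely the passage between a rail arc and its associated open rail arc, together with the induced passage between their rail isotopies, recorded at the end of \S \ref{section_rail_isotopies}. The link \textbf{(ii)}$\leftrightarrow$\textbf{(iii)} is conjugation by the fixed homeomorphism $h$: since $h$ sends points near $\ell_i$ to points near $c_i\times[0,1]$, it carries open rail arcs to open rail arcs and rail isotopies to rail isotopies, while $h^{-1}$ undoes this. The link \textbf{(iii)}$\leftrightarrow$\textbf{(iv)} is the verbatim analogue of \textbf{(i)}$\leftrightarrow$\textbf{(ii)}, now for the triple $(M,(\mathcal{H}_2)^o,\{c_1\times[0,1],c_2\times[0,1]\})$ in place of $(\mathbb R^3,\mathbb R^3-(\ell_1\cup\ell_2),\{\ell_1,\ell_2\})$; this is legitimate because the definitions of \S \ref{section_rail_isotopies} were transported to exactly this triple, so the same closure-and-restriction argument applies word for word.

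The link that needs genuine work is \textbf{(iv)}$\leftrightarrow$\textbf{(v)}. One inclusion is free, since a rail isotopy carried out inside $M$ is, in particular, a rail isotopy in $\mathcal{H}_2$ of the kind allowed to touch $\partial_0$. For the reverse inclusion the idea is a collaring push-off. I would fix a p.l.\ collar $N\cong\partial_0\times[0,1]$ of $\partial_0$ in $\mathcal{H}_2$, with $\partial_0=\partial_0\times\{0\}$, chosen product-like along the four frontier circles $c_i\times\{0,1\}$ so that $N$ meets each rail cylinder $c_i\times[0,1]$ in a collar of those circles lying inside that cylinder. This collar provides a p.l.\ homeomorphism $\Phi\colon\mathcal{H}_2\to M$ which is the identity off $N$ and stretches the collar coordinate $[0,1]$ onto $(0,1]$; by the choice of $N$, $\Phi$ preserves each rail cylinder and takes rail arcs to rail arcs, and it arises as the terminal map of a collaring move of $\mathcal{H}_2$ onto $M$. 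Given a rail isotopy $H_t$ in $\mathcal{H}_2$ from $a_0$ to $a_1$, the conjugate $\Phi\circ H_t\circ\Phi^{-1}$ is then an ambient isotopy of $M$ carrying $\Phi(a_0)$ to $\Phi(a_1)$, that is, a rail isotopy in $M$; and, since the collaring move and its inverse are themselves rail isotopies in $\mathcal{H}_2$ joining $a_i$ to $\Phi(a_i)$, it follows that $a_0$ and $a_1$ are rail isotopic in $\mathcal{H}_2$ exactly when their push-offs $\Phi(a_0),\Phi(a_1)$ are rail isotopic in $M$.

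The step I expect to be the main obstacle is the careful construction of the collar $N$, and of the associated collaring move, near the ``corners'' of $\mathcal{H}_2$ along the circles $c_i\times\{0,1\}$ where $\partial_0$ abuts the rail cylinders: one cannot merely quote a collaring theorem, because $N$ and the push-off have to be arranged so as to keep each rail cylinder invariant and each moving endpoint on its own cylinder at every instant. Once that is done, the remaining points --- that the five correspondences are mutually inverse on isotopy classes, and that each respects orientations as well as the convention that the first endpoint lies on rail $1$ and the last on rail $2$ --- are routine checks.
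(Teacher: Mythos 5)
Your proposal is correct and follows essentially the same route as the paper: the paper proves the proposition by exactly this chain of natural correspondences, passing from rail isotopy in $\mathbb{R}^3$ to rail isotopy of open rail arcs in $\mathbb{R}^3-(\ell_1\cup\ell_2)$, across the homeomorphism $h$ onto $\left(\mathcal{H}_2\right)^o$, and then to rail isotopy in $\mathcal{H}_2-\left((c_0\times[0,1])\cup(A\times 0)\cup(A\times 1)\right)$ and in $\mathcal{H}_2$. The only difference is one of detail: the link you treat with the collar push-off is simply declared in the paper to ``clearly correspond'', so your argument is a more explicit rendering of the same proof rather than a different one.
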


Let us now notice that any knot in $\mathcal{H}_2$ is isotopic to some knot $k$ with a unique point on $c_1\times [0,1]$ and a unique point on $c_2 \times [0,1]$. Let us call $k$ as \textit{rail knot}, its points on $c_i\times [0,1], \ i=1,2$ as rail points  and the two arcs of $k$ with endpoints the rail point as its corresponding rail arcs. 
For two rail knots $k_1,k_2$, we can modify any isotopy of one onto the other so as at each stage its image is a rail knot. We get then a rail isotopy of the two rail arcs of $k_1$ to the two rail arcs of $k_2$. Throughout this isotopy, the images of the two arcs are disjoint except for the two rail points of the first arc which have the same images as the two rail points of the second. Let us in general, call  two rail isotopies of two rail arcs with the same endpoints, as \textit{matching} whenever these two properties hold.   Then the converse of the first observation about knots holds, and summarizing we get:  

\begin{prop}
	Knot isotopies in  $\mathcal{H}_2$ are in one to one correspondence with matching rail isotopies of two rail arcs with the same endpoints.
\end{prop}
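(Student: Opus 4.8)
The plan is to exhibit two explicit, mutually inverse operations between knot isotopies in $\mathcal{H}_2$ and matching rail isotopies of two rail arcs with the same endpoints: ``cutting'' a one–parameter family of rail knots at its two rail points to produce a matching pair of rail isotopies, and ``gluing'' a matching rail isotopy back into a family of rail knots, which is then realized by an ambient isotopy.

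First I would treat the passage from a knot isotopy to a matching rail isotopy. Given a knot isotopy of $\mathcal{H}_2$, i.e.\ an ambient isotopy $F_t$ with $F_0=\mathrm{id}$, I invoke the two observations preceding the statement: after an isotopy we may assume its starting and ending knots are rail knots $k_1,k_2$, and we may then modify $F$ (rel $t=0,1$) so that $k^t:=F_t(k_1)$ is a rail knot for every $t$, with rail points $p^t\in c_1\times[0,1]$ and $q^t\in c_2\times[0,1]$ depending p.l.\ on $t$. Cutting $k^t$ at $p^t$ and $q^t$ splits it into its two rail arcs; labelling them $a^t,b^t$ continuously from $t=0$ (possible since $[0,1]$ is connected), I obtain families $t\mapsto a^t$ and $t\mapsto b^t$. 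Each is a rail isotopy: interiors stay in $(\mathcal{H}_2)^o$, endpoints stay on the cylinders $c_1\times[0,1]$, $c_2\times[0,1]$, and the family is carried by $F$. The pair is \emph{matching}: since $k^t$ is embedded, $a^t\cap b^t=\{p^t,q^t\}$, and these are precisely the common rail points of $a^t$ and $b^t$. Thus $F$ is assigned a matching rail isotopy from the two rail arcs of $k_1$ to those of $k_2$.

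Next I would construct the inverse. Given a matching rail isotopy $(a^t,b^t)_{t\in[0,1]}$ of two rail arcs with the same endpoints, so that for each $t$ the arcs $a^t,b^t$ share their first endpoint $p^t$ and last endpoint $q^t$ and meet nowhere else, set $k^t:=a^t\cup b^t$. Then $k^t$ is a rail knot: it is a closed curve because the endpoints agree, it is embedded because $a^t$ and $b^t$ are embedded arcs meeting only where they are joined, and it meets $c_1\times[0,1]$ only at $p^t$ and $c_2\times[0,1]$ only at $q^t$ since the arc interiors lie in $(\mathcal{H}_2)^o$. So $t\mapsto k^t$ is a p.l.\ family of rail knots from $k^0$ to $k^1$. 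I then promote this family of circle embeddings to an ambient isotopy $F_t$ of $\mathcal{H}_2$ with $F_0=\mathrm{id}$ and $F_t(k^0)=k^t$ by the p.l.\ isotopy extension theorem, applied in a collar of $\partial\mathcal{H}_2$ around the moving boundary points $p^t,q^t$ (where the local picture is standard) and in the interior elsewhere; this $F_t$ is a knot isotopy from $k^0$ to $k^1$. The two constructions are visibly mutually inverse --- cutting then regluing returns the same family of rail knots, and gluing then cutting at the common rail points returns the two original families of rail arcs --- and combined with the fact that every knot isotopy becomes, after the normalization, such a family of rail knots, this yields the asserted one–to–one correspondence.

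The step I expect to be the main obstacle is the normalization used in the second paragraph --- modifying an arbitrary ambient isotopy of $\mathcal{H}_2$ so that \emph{every} intermediate knot is a rail knot. The argument is by general position: a generic such isotopy moves $k_1$ through knots that miss the faces $c_0\times[0,1]$, $A\times 0$, $A\times 1$ of $\partial\mathcal{H}_2$ and meet each of $c_1\times[0,1]$, $c_2\times[0,1]$ transversally in a single point, except at finitely many parameter values; at each such value one corrects by a small local push into $(\mathcal{H}_2)^o$ or out toward the relevant cylinder, retaining exactly one transverse contact with each of $c_1\times[0,1]$ and $c_2\times[0,1]$. Compactness of $[0,1]$ together with the p.l.\ category make finitely many corrections suffice, with $p^t,q^t$ tracing p.l.\ paths. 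A secondary, routine point is the boundary–aware isotopy extension in the third paragraph, which is standard once the two contacts are put in collar–standard form.
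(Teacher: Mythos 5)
Your proposal is correct and follows essentially the same route as the paper, which only sketches this argument in the paragraph preceding the proposition: isotope to rail knots, modify the ambient isotopy so every intermediate stage is a rail knot, cut at the two rail points to get a matching pair of rail isotopies, and conversely glue a matching pair back into a family of rail knots realized by an ambient isotopy. The extra detail you supply (the general-position normalization and the p.l.\ isotopy extension near the boundary) is exactly what the paper leaves implicit, so there is nothing to correct.
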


\section{A reminder on knotoids} \label{section_knotoids}

In this section we recall some facts about knotoids. A \textit{knotoid diagram} $K$ in an oriented surface $\Sigma$  is an immersion of the unit interval $[0,1]$ in $\Sigma$ with a finite number of double points each of which is a transversal self-intersection endowed with over/under data. These are the  crossings of $K$. The images of $0$ and $1$ are two distinct points called the \textit{endpoints} of $K$ and are specifically called  \textit{leg} and  \textit{head}, respectively, so that $K$  is naturally oriented from its leg to its head. The trivial knotoid diagram is assumed to be an immersed arc without any self-intersections. See Figure~\ref{figure_knotoids_examples} for some examples.

\begin{figure}[!h]
	\centering
	\psset{xunit=1.0cm,yunit=1.0cm,algebraic=true,dimen=middle,dotstyle=o,dotsize=5pt 0,linewidth=1.6pt,arrowsize=3pt 2,arrowinset=0.25}
	\begin{pspicture*}(1.5548788849060198,2.884231732728732)(9.993244060037942,4.231105013402756)
	\psaxes[labelFontSize=\scriptstyle,xAxis=true,yAxis=true,Dx=1.,Dy=1.,ticksize=-2pt 0,subticks=2]{->}(0,0)(1.5548788849060198,2.884231732728732)(9.993244060037942,4.231105013402756)
	\parametricplot[linewidth=0.8pt]{-1.013690829904336}{0.7595360487821852}{1.*0.26133569415365393*cos(t)+0.*0.26133569415365393*sin(t)+7.231218944945993|0.*0.26133569415365393*cos(t)+1.*0.26133569415365393*sin(t)+3.5384858157264754}
	\parametricplot[linewidth=0.8pt]{1.5020661116134124}{2.785872906422819}{1.*0.6788608434088035*cos(t)+0.*0.6788608434088035*sin(t)+6.578822907889188|0.*0.6788608434088035*cos(t)+1.*0.6788608434088035*sin(t)+3.1503465667286434}
	\parametricplot[linewidth=0.8pt]{1.180418092077292}{2.509348563790416}{1.*0.4311110845775459*cos(t)+0.*0.4311110845775459*sin(t)+6.712447871732008|0.*0.4311110845775459*cos(t)+1.*0.4311110845775459*sin(t)+3.634926066911217}
	\parametricplot[linewidth=0.8pt]{3.537022663596186}{5.211678691778128}{1.*0.7345446073989543*cos(t)+0.*0.7345446073989543*sin(t)+7.017693896503618|0.*0.7345446073989543*cos(t)+1.*0.7345446073989543*sin(t)+3.9615404429725958}
	\parametricplot[linewidth=0.8pt]{0.3081239163679511}{1.065842404640644}{1.*0.761862779759634*cos(t)+0.*0.761862779759634*sin(t)+6.507937958787707|0.*0.761862779759634*cos(t)+1.*0.761862779759634*sin(t)+3.366822751138694}
	\parametricplot[linewidth=0.8pt]{0.625912904886897}{1.2657556832867782}{1.*0.5398485944695817*cos(t)+0.*0.5398485944695817*sin(t)+6.983219212130641|0.*0.5398485944695817*cos(t)+1.*0.5398485944695817*sin(t)+3.4021742001819897}
	\parametricplot[linewidth=0.8pt]{2.5646116478235923}{3.1321998662402377}{1.*0.6138247968713376*cos(t)+0.*0.6138247968713376*sin(t)+9.275389887207874|0.*0.6138247968713376*cos(t)+1.*0.6138247968713376*sin(t)+3.0105276965645897}
	\parametricplot[linewidth=0.8pt]{2.052276850041885}{2.453375620735589}{1.*1.7246980124416833*cos(t)+0.*1.7246980124416833*sin(t)+10.205358198678107|0.*1.7246980124416833*cos(t)+1.*1.7246980124416833*sin(t)+2.4314339098565987}
	\parametricplot[linewidth=0.8pt]{0.5945562282967374}{2.037159523224833}{1.*0.34172561323901396*cos(t)+0.*0.34172561323901396*sin(t)+9.560318335433536|0.*0.34172561323901396*cos(t)+1.*0.34172561323901396*sin(t)+3.6548189354489367}
	\parametricplot[linewidth=0.8pt]{-0.8844060200507915}{0.5085656213377389}{1.*0.397367096706076*cos(t)+0.*0.397367096706076*sin(t)+9.496325018219947|0.*0.397367096706076*cos(t)+1.*0.397367096706076*sin(t)+3.6527455938203652}
	\parametricplot[linewidth=0.8pt]{0.00976561142166362}{0.8712950698113682}{1.*1.1507194630952235*cos(t)+0.*1.1507194630952235*sin(t)+8.578864647549722|0.*1.1507194630952235*cos(t)+1.*1.1507194630952235*sin(t)+2.949175408800748}
	\parametricplot[linewidth=0.8pt]{0.8000569774650561}{2.4142309181627706}{1.*0.23655597867502653*cos(t)+0.*0.23655597867502653*sin(t)+8.9935066167198|0.*0.23655597867502653*cos(t)+1.*0.23655597867502653*sin(t)+3.7903472191250236}
	\parametricplot[linewidth=0.8pt]{2.499793470687477}{4.376976836726559}{1.*0.4297599582844793*cos(t)+0.*0.4297599582844793*sin(t)+9.161061899385682|0.*0.4297599582844793*cos(t)+1.*0.4297599582844793*sin(t)+3.6903631423821137}
	\parametricplot[linewidth=0.8pt]{4.380158001139283}{4.911854802855721}{1.*1.1053601351364213*cos(t)+0.*1.1053601351364213*sin(t)+9.380119136377663|0.*1.1053601351364213*cos(t)+1.*1.1053601351364213*sin(t)+4.3294674634347}
	\parametricplot[linewidth=0.8pt]{0.05377884529915093}{1.0888961143680977}{1.*0.6265904606399518*cos(t)+0.*0.6265904606399518*sin(t)+3.934323055206616|0.*0.6265904606399518*cos(t)+1.*0.6265904606399518*sin(t)+3.403437843266223}
	\parametricplot[linewidth=0.8pt]{1.061451961642927}{2.68916245835109}{1.*0.3133582952018022*cos(t)+0.*0.3133582952018022*sin(t)+4.071930010601838|0.*0.3133582952018022*cos(t)+1.*0.3133582952018022*sin(t)+3.6850876101429573}
	\parametricplot[linewidth=0.8pt]{2.8806953056028495}{4.245328415352342}{1.*0.4693753465785539*cos(t)+0.*0.4693753465785539*sin(t)+4.243590678090111|0.*0.4693753465785539*cos(t)+1.*0.4693753465785539*sin(t)+3.7009987141351943}
	\parametricplot[linewidth=0.8pt]{4.398764324452543}{5.336422144022633}{1.*1.0222801819678498*cos(t)+0.*1.0222801819678498*sin(t)+4.347630164885868|0.*1.0222801819678498*cos(t)+1.*1.0222801819678498*sin(t)+4.254310475969408}
	\parametricplot[linewidth=0.8pt]{-0.8226576128239484}{0.9839233647142973}{1.*0.39261105774853094*cos(t)+0.*0.39261105774853094*sin(t)+4.677877932999696|0.*0.39261105774853094*cos(t)+1.*0.39261105774853094*sin(t)+3.7124677992966917}
	\parametricplot[linewidth=0.8pt]{0.9122041334284211}{2.76679456943174}{1.*0.2777263079603046*cos(t)+0.*0.2777263079603046*sin(t)+4.725320566549362|0.*0.2777263079603046*cos(t)+1.*0.2777263079603046*sin(t)+3.819744624179272}
	\parametricplot[linewidth=0.8pt]{5.018389246503913}{5.717779966748461}{1.*0.5944581179287135*cos(t)+0.*0.5944581179287135*sin(t)+3.8717960428580342|0.*0.5944581179287135*cos(t)+1.*0.5944581179287135*sin(t)+4.05984256365547}
	\parametricplot[linewidth=0.8pt]{1.383088606265577}{2.273151477266394}{1.*0.4250527587292638*cos(t)+0.*0.4250527587292638*sin(t)+1.9846971923266015|0.*0.4250527587292638*cos(t)+1.*0.4250527587292638*sin(t)+3.210827315011941}
	\parametricplot[linewidth=0.8pt]{4.37175047172312}{4.658236369346498}{1.*1.6633287707623736*cos(t)+0.*1.6633287707623736*sin(t)+2.6197149851800363|0.*1.6633287707623736*cos(t)+1.*1.6633287707623736*sin(t)+5.196170299565205}
	\parametricplot[linewidth=0.8pt]{4.587352322685157}{5.659554732863946}{1.*0.5171490902681593*cos(t)+0.*0.5171490902681593*sin(t)+2.594179640812658|0.*0.5171490902681593*cos(t)+1.*0.5171490902681593*sin(t)+4.048391550175069}
	\psline[linewidth=0.8pt](4.27454322404783,3.622606743971146)(4.27454322404783,3.622606743971146)
	\psline[linewidth=0.8pt](4.27454322404783,3.622606743971146)(4.27454322404783,3.622606743971146)
	\psline[linewidth=0.8pt](5.952501992602721,3.5170898593655413)(6.037721857489821,3.5602998542754243)
	\psline[linewidth=0.8pt](6.037721857489821,3.5602998542754243)(6.032532878335625,3.464892288921176)
	\psline[linewidth=0.8pt](8.623406905465249,3.1109903369109597)(8.68564283016517,3.180760742603273)
	\psline[linewidth=0.8pt](8.68564283016517,3.180760742603273)(8.7149478115771,3.091977647938969)
	\psline[linewidth=0.8pt](1.7786866506390717,3.6372883142320966)(1.877747076669504,3.622204903687225)
	\psline[linewidth=0.8pt](1.877747076669504,3.622204903687225)(1.815154246946719,3.5439577635075974)
	\psline[linewidth=0.8pt](4.134131244639585,3.581879414455725)(4.230866298014159,3.58608206948836)
	\psline[linewidth=0.8pt](4.230866298014159,3.58608206948836)(4.186138377348477,3.5002057283132184)
	\begin{scriptsize}
	\psdots[dotsize=2pt 0,dotstyle=*](5.942461422020274,3.386770135808418)
	\psdots[dotsize=2pt 0,dotstyle=*](7.233920208381669,3.597873975886729)
	\psdots[dotsize=2pt 0,dotstyle=*](8.661592167316302,3.0162931375751327)
	\psdots[dotsize=2pt 0,dotstyle=*](9.729529240653733,2.960412709319109)
	\psdots[dotsize=2pt 0,dotstyle=*](4.560007632958618,3.437118914037305)
	\psdots[dotsize=2pt 0,dotstyle=*](4.050874842181515,3.4929993422933285)
	\psdots[dotsize=2pt 0,dotstyle=*](1.7101057919014326,3.535279793066744)
	\psdots[dotsize=2pt 0,dotstyle=*](3.013982451208654,3.7463836331450553)
	\end{scriptsize}
	\end{pspicture*}
	\caption{Examples of knotoid diagrams.}
	\label{figure_knotoids_examples}
\end{figure}
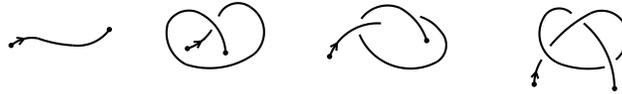

On the set of knotoid diagrams in $\Sigma$  the usual local  moves are allowed away from the endpoints. Namely the three Reidemeister moves together with planar isotopy, 
 which includes also the {\it swing moves} for the endpoints, whereby  an endpoint can be pulled within its region, without crossing any other arc of the diagram. See Figure~\ref{figure_knotoid_reidemeister_moves}. All these moves generate an equivalence relation in the set of knotoid diagrams in $\Sigma$ and the equivalence classes are called {\it knotoids}.  

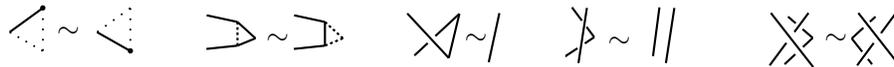
\begin{figure}[!h]
	\centering
	\psset{xunit=1.0cm,yunit=1.0cm,algebraic=true,dimen=middle,dotstyle=o,dotsize=5pt 0,linewidth=1.6pt,arrowsize=3pt 2,arrowinset=0.25}
	\begin{pspicture*}(0.5384030150416896,3.1453136553469774)(12.654373332312707,4.143559998126994)
	\psaxes[labelFontSize=\scriptstyle,xAxis=true,yAxis=true,Dx=0.5,Dy=0.5,ticksize=-2pt 0,subticks=2]{->}(0,0)(0.5384030150416896,3.1453136553469774)(12.654373332312707,4.143559998126994)
	\psline[linewidth=0.8pt](5.9745722612878644,3.965679933097659)(6.526162649653523,3.3735348686786497)
	\psline[linewidth=0.8pt](6.526162649653523,3.3735348686786497)(6.667560574354975,3.960880094361609)
	\psline[linewidth=0.8pt](3.307331385732172,3.9408697524405505)(3.7172423389348435,3.8524046011265356)
	\psline[linewidth=0.8pt](3.7172423389348435,3.8524046011265356)(3.9565311345834555,3.634869332355069)
	\psline[linewidth=0.8pt](3.7172423389348435,3.536978461407909)(3.303925328269058,3.4934714076536157)
	\psline[linewidth=0.8pt](3.9565311345834555,3.634869332355069)(3.7172423389348435,3.536978461407909)
	\psline[linewidth=0.8pt,linestyle=dashed,dash=1pt 1pt](3.7172423389348435,3.8524046011265356)(3.7172423389348435,3.536978461407909)
	\psplot[linewidth=0.4pt]{0.5384030150416896}{12.654373332312707}{(-4.746898439789713-0.*x)/-1.}
	\psline[linewidth=0.4pt]{->}(1.0815132602437787,4.746898439789713)(2.244092436353957,4.746898439789713)
	\psline[linewidth=0.8pt](4.469910561842351,3.9408697524405505)(4.8798215150450215,3.8524046011265356)
	\psline[linewidth=0.8pt](4.8798215150450215,3.8524046011265356)(4.8798215150450215,3.536978461407909)
	\psline[linewidth=0.8pt](4.8798215150450215,3.536978461407909)(4.466504504379237,3.4934714076536157)
	\psline[linewidth=0.8pt](8.362459203633898,4.038946323308519)(8.250182376971203,3.3057149008760947)
	\psline[linewidth=0.8pt](9.241271955480865,3.397784582893436)(9.326648420264412,4.04657670636543)
	\psline[linewidth=0.8pt](9.525038379744077,4.038946323308519)(9.412761553081381,3.3057149008760947)
	\psline[linewidth=0.8pt,linestyle=dashed,dash=1pt 1pt](4.8798215150450215,3.536978461407909)(5.1191103106936335,3.634869332355069)
	\psline[linewidth=0.8pt,linestyle=dashed,dash=1pt 1pt](5.1191103106936335,3.634869332355069)(4.8798215150450215,3.8524046011265356)
	\psline[linewidth=0.8pt](0.6874544057571992,3.7163936640801003)(1.1303566211355798,4.0380042209841225)
	\psline[linewidth=0.8pt,linestyle=dotted](0.6874544057571992,3.7163936640801003)(1.1303566211355798,3.46572549171585)
	\psline[linewidth=0.8pt,linestyle=dotted](1.1303566211355798,3.46572549171585)(1.1303566211355798,4.0380042209841225)
	\psline[linewidth=0.8pt,linestyle=dotted](1.8500335818673777,3.7163936640801003)(2.2929357972457582,4.0380042209841225)
	\psline[linewidth=0.8pt,linestyle=dotted](2.2929357972457582,4.0380042209841225)(2.2929357972457582,3.46572549171585)
	\psline[linewidth=0.8pt](2.2929357972457582,3.46572549171585)(1.8500335818673777,3.7163936640801003)
	\psline[linewidth=0.8pt](10.808425647559389,3.9714254956195245)(11.372154533214061,3.2512788554829157)
	\psline[linewidth=0.8pt](11.971004823669567,3.9714254956195245)(12.53473370932424,3.2512788554829157)
	\psline[linewidth=0.8pt](8.078692779370687,3.397784582893436)(8.078692779370687,3.397784582893436)
	\psline[linewidth=0.8pt](10.792245420888882,3.2741700046536466)(11.013968697562659,3.5736734806642088)
	\psline[linewidth=0.8pt](11.121057589998268,3.718328999055165)(11.308425647559389,3.9714254956195245)
	\psline[linewidth=0.8pt](11.234755387049098,3.5186046545900247)(11.364524150157152,3.6404283913853406)
	\psline[linewidth=0.8pt](11.364524150157152,3.6404283913853406)(11.229548465403681,3.7565702596615833)
	\psline[linewidth=0.8pt](11.138571110489373,3.834853099936686)(11.036417678710011,3.922752564491021)
	\psline[linewidth=0.8pt](12.198996854820189,3.922752564491021)(12.124975654764112,3.8547477117830202)
	\psline[linewidth=0.8pt](12.037883132557099,3.7747339570726215)(11.88339019802705,3.63279800832843)
	\psline[linewidth=0.8pt](11.88339019802705,3.63279800832843)(12.022395910027385,3.4559051302775488)
	\psline[linewidth=0.8pt](12.098641358292456,3.3588783492352303)(12.153214556478726,3.289430770767467)
	\psline[linewidth=0.8pt](11.95482459699906,3.2741700046536466)(12.175230503981282,3.5718939796225038)
	\psline[linewidth=0.8pt](12.271879729007138,3.702447610631487)(12.471004823669567,3.9714254956195245)
	\rput[tl](1.32,3.78){$\sim$}
	\rput[tl](4.1,3.7){$\sim$}
	\rput[tl](6.74,3.7){$\sim$}
	\rput[tl](8.645,3.65){$\sim$}
	\rput[tl](11.525,3.7){$\sim$}
	\psline[linewidth=0.8pt](6.246618136190322,3.5705516516998377)(6.069338585233445,3.406165158994369)
	\psline[linewidth=0.8pt](6.347887475681171,3.6644559483186256)(6.667560574354975,3.960880094361609)
	\psline[linewidth=0.8pt](8.078692779370687,3.397784582893436)(8.237077984082978,3.5023188180035483)
	\psline[linewidth=0.8pt](8.334494939311382,3.566614008454295)(8.4602119322162,3.6495872237714755)
	\psline[linewidth=0.8pt](8.164069244154234,4.04657670636543)(8.296422810105888,3.8691521884397426)
	\psline[linewidth=0.8pt](8.358184261030946,3.7863588343054655)(8.4602119322162,3.6495872237714755)
	\psline[linewidth=0.8pt](10.990635380368548,3.289430770767467)(11.132684966049267,3.422783443039162)
	\psline[linewidth=0.8pt](7.198879121504373,3.969556255505894)(7.057227517918134,3.3157796235694037)
	\begin{scriptsize}
	\psdots[dotsize=2pt 0,dotstyle=*](1.1303566211355798,4.0380042209841225)
	\psdots[dotsize=2pt 0,dotstyle=*](2.2929357972457582,3.46572549171585)
	\end{scriptsize}
	\end{pspicture*}
	\caption{Moves for knotoid diagrams.}
	\label{figure_knotoid_reidemeister_moves}
\end{figure}
	
	The moves consisting of pulling the arc adjacent to an endpoint over or under another arc, as shown in Figure~\ref{figure_forbidden_moves}, are the {\it forbidden moves} in the theory. Notice that, if both forbidden moves were allowed, any knotoid diagram in any surface could be clearly turned into the trivial knotoid diagram. 
	
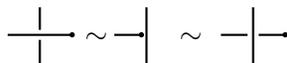
\begin{figure}[!h]
	\centering
	\psset{xunit=1.0cm,yunit=1.0cm,algebraic=true,dimen=middle,dotstyle=o,dotsize=5pt 0,linewidth=1.6pt,arrowsize=3pt 2,arrowinset=0.25}
	\begin{pspicture*}(0.4316028074167062,3.2447217946874587)(4.361643390934681,4.106678548434506)
	\psaxes[labelFontSize=\scriptstyle,xAxis=true,yAxis=true,Dx=0.5,Dy=0.5,ticksize=-2pt 0,subticks=2]{->}(0,0)(0.4316028074167062,3.2447217946874587)(4.361643390934681,4.106678548434506)
	\psline[linewidth=0.4pt]{->}(1.0815132602437787,4.746898439789713)(2.499093482488279,4.746898439789713)
	\rput[tl](1.6,3.75){$\sim$}
	\rput[tl](2.855,3.75){$\sim$}
	\psline[linewidth=0.8pt](0.5702392083690274,3.71488002400403)(1.4261682925094457,3.71488002400403)
	\psline[linewidth=0.8pt](0.9982037504392364,4.082567870008015)(0.9982037504392364,3.7811843896768793)
	\psline[linewidth=0.8pt](0.9982037504392364,3.6485756583311812)(0.9982037504392364,3.3170538299669308)
	\psline[linewidth=0.8pt](3.8333641949282367,4.082567870008015)(3.8333641949282367,3.3170538299669308)
	\psline[linewidth=0.8pt](3.4053996528580277,3.71488002400403)(3.7588764302724154,3.71488002400403)
	\psline[linewidth=0.8pt](3.907851959584055,3.71488002400403)(4.261328736998446,3.71488002400403)
	\psline[linewidth=0.8pt](2.4157839726837365,4.082567870008015)(2.4157839726837365,3.3170538299669308)
	\psline[linewidth=0.8pt](1.9878194306135275,3.71488002400403)(2.3484017423227126,3.71488002400403)
	\begin{scriptsize}
	\psdots[dotsize=1pt 0,dotstyle=*](1.0815132602437787,4.746898439789713)
	\psdots[dotsize=1pt 0,dotstyle=*](2.499093482488279,4.746898439789713)
	\psdots[dotsize=2pt 0,dotstyle=*](1.4261682925094457,3.71488002400403)
	\psdots[dotsize=2pt 0,dotstyle=*](4.261328736998446,3.71488002400403)
	\psdots[dotsize=2pt 0,dotstyle=*](2.3484017423227126,3.71488002400403)
	\end{scriptsize}
	\end{pspicture*}
	\caption{Forbidden knotoid moves.}
	\label{figure_forbidden_moves}
\end{figure}
	
The theory of knotoids was introduced by Turaev \cite{Tu} in 2010. The theory of  knotoids in the 2-sphere (spherical knotoids) extends classical knot theory and also proposes a new diagrammatic approach to  classical knot theory \cite{Tu}. 
 This approach promises reducing of the computational complexity of knot invariants, see  \cite{Tu,DST}.  In \cite{Tu} basic properties of knotoids were studied, including the introduction of several invariants of knotoids in the 2-sphere,  such as the complexity (or height \cite{GK1}) and the Jones/bracket polynomial. Knotoids in $S^2$ were classified by Bartholomew in \cite{Ba} up to 5 crossings by using Turaev's generalization of the bracket polynomial for knotoids. There is also a recent classification table for prime knotoids of positive complexity with up to 5 crossings \cite{KMT}, obtained by using the correspondence between  knotoids in $S^2$ and  knots in the thickened torus. New invariants for knotoids were introduced in \cite{Gthesis,GK1} in analogy with invariants from virtual knot theory. 

Planar knotoids surject to spherical knotoids, but do not inject \cite{Tu}.  For example, the first  two illustrations of Figure~\ref{figure_knotoids_examples} are equivalent as spherical knotoids but distinct as planar ones.  
 This means that planar knotoids provide a much richer combinatorial structure. This fact has interesting implications in the study of proteins.
 Indeed, recently knotoids have been studied in the field of biochemistry as they suggest new topological models for open protein chains \cite{GDBS,GGLDSK,DRGDSMRSS}.
 Such studies are enabled due to the following lifting of knotoids to open space curves (what we call rail arcs), proposed by G\"ug\"umc\"u and Kauffman in  \cite{GK1}. Namely,  an open space curve  in $\mathbb{R}^3$ projects to a planar knotoid diagram when projected along the two lines passing through its endpoints (what we call rails) and are perpendicular to a chosen projection plane, and this curve  can be viewed as a  lifting of the  diagram.  Figure~\ref{figure_projections_of_arcs} illustrates two such projections.  The method in \cite{GDBS,GGLDSK,DRGDSMRSS}  is to project an open protein chain to several planes and to consider all possible knotoid types obtained this way, choosing the dominant one for representing the protein. Then, the invariants introduced in \cite{Tu, Gthesis, GK1}  are used for determining its topological type. 

\begin{figure}[!h]
	\centering
	\psset{xunit=1.0cm,yunit=1.0cm,algebraic=true,dimen=middle,dotstyle=o,dotsize=5pt 0,linewidth=1.6pt,arrowsize=3pt 2,arrowinset=0.25}
	\begin{pspicture*}(3.190834007232368,2.1704652631105077)(8.32338166157489,6.674190853841482)
	\psaxes[labelFontSize=\scriptstyle,xAxis=true,yAxis=true,Dx=0.5,Dy=0.5,ticksize=-2pt 0,subticks=2]{->}(0,0)(3.190834007232368,2.1704652631105077)(8.32338166157489,6.674190853841482)
	\psline[linewidth=0.8pt](4.480477932711423,3.210771026470039)(5.250763258001653,4.215672783897377)
	\psline[linewidth=0.8pt](7.462275086307531,3.210771026470039)(8.23256041159776,4.215672783897377)
	\psline[linewidth=0.8pt](4.480477932711423,3.210771026470039)(7.462275086307531,3.210771026470039)
	\psline[linewidth=0.8pt,linecolor=red](5.7897163177173985,3.579614158436346)(5.7897163177173985,6.007590970387144)
	\psline[linewidth=0.8pt,linecolor=red](7.348194144442746,3.579614158436346)(7.348194144442746,4.56234264157902)
	\psline[linewidth=0.8pt,linecolor=red](7.348194144442746,4.750538252321563)(7.348194144442746,6.007590970387144)
	\parametricplot[linewidth=0.8pt]{1.3124896428446133}{2.0491738013646073}{1.*2.0441623491706924*cos(t)+0.*2.0441623491706924*sin(t)+6.730724779917389|0.*2.0441623491706924*cos(t)+1.*2.0441623491706924*sin(t)+3.123699349255933}
	\parametricplot[linewidth=0.8pt]{-0.897963385319354}{1.212895220174818}{1.*0.21069737658829024*cos(t)+0.*0.21069737658829024*sin(t)+7.336666585591969|0.*0.21069737658829024*cos(t)+1.*0.21069737658829024*sin(t)+4.843622530429749}
	\parametricplot[linewidth=0.8pt]{3.456529644292434}{5.190203848893322}{1.*0.643976505465421*cos(t)+0.*0.643976505465421*sin(t)+7.171847961037518|0.*0.643976505465421*cos(t)+1.*0.643976505465421*sin(t)+5.250697016198361}
	\parametricplot[linewidth=0.8pt]{1.850915222861518}{2.8256036869461454}{1.*1.1396477477429465*cos(t)+0.*1.1396477477429465*sin(t)+7.66327245733914|0.*1.1396477477429465*cos(t)+1.*1.1396477477429465*sin(t)+4.912363758490943}
	\parametricplot[linewidth=0.8pt]{-1.013690829904336}{0.7595360487821852}{1.*0.26133569415365393*cos(t)+0.*0.26133569415365393*sin(t)+7.231218944945993|0.*0.26133569415365393*cos(t)+1.*0.26133569415365393*sin(t)+3.5384858157264754}
	\parametricplot[linewidth=0.8pt]{0.42592965653394305}{1.8792143378973862}{1.*0.13285260566959992*cos(t)+0.*0.13285260566959992*sin(t)+7.18568062011666|0.*0.13285260566959992*cos(t)+1.*0.13285260566959992*sin(t)+3.790516595140854}
	\parametricplot[linewidth=0.8pt]{1.6219613144402374}{2.1081175146628013}{1.*2.41737740702649*cos(t)+0.*2.41737740702649*sin(t)+7.0270183312744265|0.*2.41737740702649*cos(t)+1.*2.41737740702649*sin(t)+1.502886623286431}
	\parametricplot[linewidth=0.8pt]{1.1181887922759286}{2.503745877034218}{1.*0.35393406681780265*cos(t)+0.*0.35393406681780265*sin(t)+6.721722502254331|0.*0.35393406681780265*cos(t)+1.*0.35393406681780265*sin(t)+3.715306198104689}
	\parametricplot[linewidth=0.8pt]{3.5397602144799913}{5.044968730984081}{1.*0.7394757223657821*cos(t)+0.*0.7394757223657821*sin(t)+7.127969499924894|0.*0.7394757223657821*cos(t)+1.*0.7394757223657821*sin(t)+4.015621785682471}
	\parametricplot[linewidth=0.8pt]{0.45843591282691365}{1.1506056541748808}{1.*0.9649778905080494*cos(t)+0.*0.9649778905080494*sin(t)+6.482854298407436|0.*0.9649778905080494*cos(t)+1.*0.9649778905080494*sin(t)+3.1525669947261172}
	\psline[linewidth=0.8pt](5.250763258001653,4.215672783897377)(5.718082577653644,4.215672783897378)
	\psline[linewidth=0.8pt](5.861350057781157,4.215672783897378)(7.276560404378989,4.215672783897377)
	\psline[linewidth=0.8pt](7.4198278845065015,4.215672783897377)(8.23256041159776,4.215672783897377)
	\psline[linewidth=0.8pt,linecolor=red](5.7897163177173985,3.028799957058603)(5.7897163177173985,2.227117008155029)
	\psline[linewidth=0.8pt,linecolor=red](7.348194144442743,3.028799957058602)(7.348194144442743,2.227117008155028)
	\psline[linewidth=0.8pt](5.973555288270382,4.985574415114078)(6.055658461648811,5.0531773034462235)
	\psline[linewidth=0.8pt](6.055658461648811,5.0531773034462235)(5.949869841797671,5.042229723821125)
	\psline[linewidth=0.8pt](5.958483004458589,3.6366050139586417)(6.02697826001612,3.703712237432131)
	\psline[linewidth=0.8pt](6.02697826001612,3.703712237432131)(5.932421377748845,3.6877756288811985)
	\psline[linewidth=0.8pt,linecolor=red](5.7897163177173985,5.966145079751118)(5.7897163177173985,6.1734378516285355)
	\psline[linewidth=0.8pt,linecolor=red](7.348194144442746,6.007590970387144)(7.348194144442746,6.214883742264562)
	\psline[linewidth=0.8pt]{->}(0.,7.532357133936048)(3.868811445485548,7.532357133936048)
	\psline[linewidth=0.8pt]{->}(3.868811445485548,7.532357133936048)(0.,7.532357133936048)
	\psline[linewidth=0.8pt](4.480477932711423,3.210771026470039)(3.806023814636113,5.633481213766879)
	\psline[linewidth=0.8pt](3.806023814636113,5.633481213766879)(4.576309139926343,6.638382971194217)
	\parametricplot[linewidth=0.8pt]{1.8293691007221915}{3.2706397293898215}{1.*0.26128939657191*cos(t)+0.*0.26128939657191*sin(t)+4.892243295439127|0.*0.26128939657191*cos(t)+1.*0.26128939657191*sin(t)+4.674075984483759}
	\parametricplot[linewidth=0.8pt]{4.498554911872299}{6.155893305355071}{1.*0.42704020139799914*cos(t)+0.*0.42704020139799914*sin(t)+4.552950742029711|0.*0.42704020139799914*cos(t)+1.*0.42704020139799914*sin(t)+4.741830220239119}
	\parametricplot[linewidth=0.8pt]{2.9320834961138402}{4.359412311086289}{1.*0.4147888324807016*cos(t)+0.*0.4147888324807016*sin(t)+4.6057186939789405|0.*0.4147888324807016*cos(t)+1.*0.4147888324807016*sin(t)+4.713732295111519}
	\parametricplot[linewidth=0.8pt]{2.145577323512249}{2.8077799445123093}{1.*0.9713077458815136*cos(t)+0.*0.9713077458815136*sin(t)+5.117691547180983|0.*0.9713077458815136*cos(t)+1.*0.9713077458815136*sin(t)+4.481753300972342}
	\psline[linewidth=0.8pt,linecolor=green](3.2737751765523413,4.958014368813275)(3.8155169824366526,5.097567892403155)
	\psline[linewidth=0.8pt,linecolor=green](4.58963912781445,5.296982949360193)(5.707957112671319,5.585063384699601)
	\psline[linewidth=0.8pt,linecolor=green](5.894806056582166,5.633195960436934)(7.786815461530115,6.120580497236117)
	\psline[linewidth=0.8pt,linecolor=green](3.4697826181764713,4.340771678285877)(4.0054700198086195,4.478765578015369)
	\psline[linewidth=0.8pt,linestyle=dotted,linecolor=green](4.58963912781445,5.296982949360193)(4.028099591921847,5.152329500027591)
	\psline[linewidth=0.8pt,linestyle=dotted,linecolor=green](4.2934132021705524,4.552940185283673)(4.633126526730202,4.640450860918803)
	\parametricplot[linewidth=0.8pt]{-0.1256914607471451}{1.7936841201714606}{1.*0.13993360620807846*cos(t)+0.*0.13993360620807846*sin(t)+4.856363216454292|0.*0.13993360620807846*cos(t)+1.*0.13993360620807846*sin(t)+4.790206958112255}
	\psline[linewidth=0.8pt,linecolor=green](4.633126526730202,4.640450860918803)(6.228337634804768,5.051379699312829)
	\psline[linewidth=0.8pt](4.576309139926343,6.638382971194217)(4.900901734336691,5.472412204430727)
	\psline[linewidth=0.8pt](4.944721827294024,5.315005817886624)(5.078791052921769,4.833415046881697)
	\psline[linewidth=0.8pt](5.113776188520828,4.7077447571640265)(5.250763258001653,4.215672783897377)
	\psline[linewidth=0.8pt](4.597416812804654,4.729965698565064)(4.65603974902441,4.785790054421361)
	\psline[linewidth=0.8pt](4.65603974902441,4.785790054421361)(4.6750735912359875,4.707108924482469)
	\begin{scriptsize}
	\psdots[dotsize=2pt 0,dotstyle=*](5.7897163177173985,3.579614158436346)
	\psdots[dotsize=2pt 0,dotstyle=*](7.348194144442746,3.579614158436346)
	\psdots[dotsize=2pt 0,dotstyle=*](7.348194144442746,6.007590970387144)
	\psdots[dotsize=2pt 0,dotstyle=*](5.7897163177173985,4.938390172463857)
	\psdots[dotsize=1pt 0,dotstyle=*](3.868811445485548,7.532357133936048)
	\psdots[dotsize=3pt 0,dotstyle=*,linecolor=darkgray](0.,7.532357133936048)
	\psdots[dotsize=2pt 0,dotstyle=*](4.58963912781445,5.296982949360193)
	\psdots[dotsize=1pt 0,dotstyle=*](-1.971208170456346,6.671785579751239)
	\psdots[dotsize=2pt 0,dotstyle=*](4.633126526730202,4.640450860918803)
	\end{scriptsize}
	\end{pspicture*}
	\caption{Projections of a rail arc as knotoid diagrams.}
\label{figure_projections_of_arcs}
\end{figure}
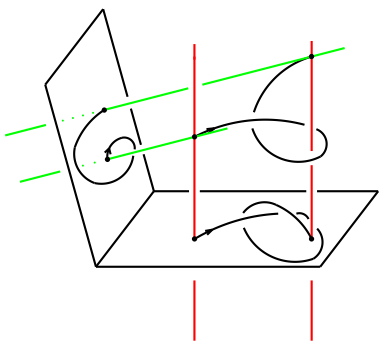

A  line isotopy in \cite{GK1}  between two open space curves is what we call rail isotopy, and G\"ug\"umc\"u and Kauffman have proved (in our new terminology) the following:
\begin{theorem*}
	Two rail arcs are rail isotopic if and only if their knotoid diagram projections in a plane perpendicular to the rails are equivalent.
\end{theorem*}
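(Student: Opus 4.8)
The plan is to adapt the classical proof of Reidemeister's theorem to rail arcs and knotoid diagrams. First I would fix coordinates on $\mathbb{R}^3$ so that $\ell_1,\ell_2$ are parallel to the $z$-axis, set $\Pi=\{z=0\}$, and let $\mathrm{pr}\colon\mathbb{R}^3\to\Pi$ be the orthogonal projection; then each rail $\ell_i$ collapses to a single point $p_i\in\Pi$. After a preliminary small rail isotopy every rail arc in question may be assumed in general position for $\mathrm{pr}$ (the projection an immersion with finitely many transverse double points, none at the endpoints, and no edge except the two endpoint-edges projecting into $\{p_1,p_2\}$), so that $\mathrm{pr}(c)$ together with its over/under data is a knotoid diagram $D_c$ with leg and head at $p_1$ and $p_2$. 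I would work throughout with the knotoid equivalence of \S\ref{section_knotoids} applied to such diagrams, and I would keep in mind the structural fact that, because $\mathrm{pr}$ is taken along the rails, the endpoints of $D_c$ are pinned at $p_1,p_2$ for \emph{every} rail arc $c$; in particular a forbidden move, which drags an endpoint across a strand, can never be produced.

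For the direction ``equivalent diagrams imply rail isotopic arcs'' I would first show that any two rail arcs $c,c'$ with $D_c=D_{c'}$ are rail isotopic: linearly interpolating the $z$-coordinates over the common projection preserves, at every crossing, the prescribed order of the two heights (both $c$ and $c'$ have the over-strand above the under-strand there), so no new double point is created and the rails are untouched, and this rail isotopy is realized by a finite sequence of triangle moves. I would then realize each generating knotoid move by a rail isotopy: a Reidemeister move carried out in a small disk of $\Pi$ disjoint from $p_1,p_2$ comes from an ambient isotopy of $\mathbb{R}^3$ supported in a box over that disk, hence disjoint from the rails; a planar isotopy $\phi_t$ of $\Pi$ fixing $p_1,p_2$ comes from the fibered ambient isotopy $(x,y,z)\mapsto(\phi_t(x,y),z)$, which fixes both rails pointwise. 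Interleaving the first observation with these lifts along a sequence of knotoid moves joining $D_{c_1}$ to $D_{c_2}$ then produces a rail isotopy from $c_1$ to $c_2$.

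For the converse I would use the reduction, from the discussion preceding Lemma~\ref{lemma_triangle_moves}, of a rail isotopy to a finite composite of triangle moves, so that by induction it suffices to show that a single triangle move $\Delta$ between rail arcs induces a knotoid equivalence; by the first bullet of Lemma~\ref{lemma_triangle_moves} I may first replace $\Delta$ by any composite $\delta_l\circ\cdots\circ\delta_1$ of triangle moves with $\delta_j\subseteq\Delta$ for all $j$. The first genuine step is a subdivision lemma, proved as in Reidemeister's theorem: triangulate $\Delta$ finely and perturb the subdivision so that every sub-move is of one of two types — (a) its triangle is disjoint from the rails and $\mathrm{pr}$ restricted to it is an embedding onto a non-degenerate triangle in general position with respect to the rest of the current diagram; or (b) it is a sub-move of a space slide move, whose triangle has an edge lying on a rail. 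An ordinary $\Delta$ yields only type (a); a space slide triangle, having one edge on a rail, is cut into triangles of type (b) by keeping a vertex fixed off the rail. The concluding step is the finite case analysis. In type (b) the replaced edge and its replacement both project onto the same segment $p_i\,\mathrm{pr}(B)$, and no crossing sign on it can change — a strand realizing such a change would meet the three-dimensional triangle, which the definition of a space slide move excludes — so the move is the identity on $D$. In type (a) it is a planar isotopy of $D$ (an edge, possibly the endpoint-edge, slid across a triangle meeting nothing, with $p_1,p_2$ staying fixed), or a Reidemeister~I, II, or III move according to the standard trichotomy (a kink of the arc bounded by the triangle; one further strand spanning the triangle over or under; a crossing of two further strands inside the triangle across which the moved portion slides). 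In every subcase the endpoints remain at $p_1,p_2$, so no forbidden move arises; hence each triangle move induces a knotoid equivalence, and together with the previous direction this proves the theorem.

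The step I expect to be the main obstacle is the subdivision/general-position lemma: exactly as in Reidemeister's theorem, the delicate part is arguing that an arbitrarily large and badly placed triangle can be cut into pieces that each project cleanly and that the edges successively built up inside $\Delta$ do not interfere with the later sub-triangles. The feature new to the present setting is the bookkeeping with the rails — that an ordinary triangle never meets a rail, that a space slide triangle can always be split through triangles still having an edge on its rail, and hence, together with the pinning of the endpoints at $p_1,p_2$, that no forbidden move is ever needed.
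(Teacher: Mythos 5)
A preliminary remark: the paper does not actually prove this statement itself -- it is quoted from \cite{GK1} -- so the only proof available for comparison is the paper's proof in \S\ref{rail_knotoids} of the analogous theorem for projection onto the plane of the rails. Your plan has essentially that architecture: reduce rail isotopy to triangle moves, subdivide a triangle move into pieces each of which projects to a single Reidemeister move or planar isotopy (this is the content of the paper's Lemmas~\ref{lemma_nice_and_good_moves} and \ref{lemma_good_moves}, and you correctly identify it as the main technical burden), treat the slide triangles with an edge on a rail separately, and, for the converse, lift each diagram move to triangle moves and finish with a ``same projection implies rail isotopic'' lemma (the paper's Lemma~\ref{lemma_arcs_with_same projection}). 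Your height-interpolation proof of that last lemma is sound and is genuinely simpler in the perpendicular-plane setting, since convex combination preserves the strict height order at each crossing and the interiors of rail arcs never project to $p_1,p_2$. Your analysis of a slide triangle is also correct: a strand whose over/under relation with the moved endpoint edge changed would have to pierce the vertical triangle, so the projected diagram is literally unchanged; and for an ordinary triangle the swept planar region cannot contain $p_1$ or $p_2$, since otherwise the vertical rail over that point would meet the spatial triangle.

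The one genuine gap is in the direction from diagram equivalence to rail isotopy. Knotoid equivalence as recalled in \S\ref{section_knotoids} includes swing moves and unrestricted planar isotopy, so a given equivalence between $D_{c_1}$ and $D_{c_2}$ will in general pass through diagrams whose endpoints are not at $p_1,p_2$; such intermediate diagrams are not projections of rail arcs with respect to the fixed rails, and an isotopy of $\Pi$ that moves $p_1$ or $p_2$ admits no fibered lift preserving the rails. Your lifting step only treats Reidemeister moves in disks missing $p_1,p_2$ and planar isotopies fixing $p_1,p_2$, so you still need the reduction: two diagrams with endpoints at $p_1,p_2$ that are knotoid equivalent are already equivalent through pinned diagrams, i.e.\ by Reidemeister moves away from $p_1,p_2$ together with ambient isotopies of $\Pi$ fixing both points. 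This is true, but it requires an argument -- for instance, compose the movie of diagrams with a continuously chosen family of plane homeomorphisms returning the wandering endpoints to $p_1,p_2$; the discrepancy accumulated at the end is an element of the pure mapping class group of the plane with two marked points, which is infinite cyclic generated by the full twist about both marked points, and that twist acts trivially on a pinned diagram because it can be supported in an annulus lying outside a disk containing the whole diagram. Without some such step (or another way of handling the endpoint-moving moves) the backward implication is not established. Note that this issue is specific to the perpendicular-plane theorem: in the paper's rail-plane setting the rails appear in the diagrams and the rail knotoid moves are defined relative to them, so the endpoints never detach from the rails and no re-pinning is needed.
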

In \cite{KoLa1} we observed that this lifting of knotoids in 3-space is related to the knot theory of the handlebody of genus~2.  Some other recent works on knotoids include: the theory of braidoids \cite{Gthesis,GL1,GL2}, a study of biquandle coloring invariants  \cite{GN1}, the study of knots that are knotoid equivalent \cite{AdHen} and the construction of double branched covers of knotoids \cite{BBHL}. For a survey on the subject the interested reader may consult \cite{GKL}.

\section{Rail isotopy and rail knotoids} \label{rail_knotoids}

	We are now going to  investigate rail isotopy in $\mathbb{R}^3$ in a new diagrammatic setting by projecting the rail arcs to the plane $\pi$ defined by the rails $\ell_1,\ell_2$, which we can call as \textit{rail plane}. Figure \ref{figure_knotoids_rail_and_planar} conveys a feeling of the difference between the new setting (rightmost figure) and that of the usual planar knotoids (leftmost figure).

	Let us note that projections of rail arcs onto planes can be as bad as projections of knots to planes, but clearly any rail arc is rail isotopic to one with a generic projection to any given plane, meaning it has only a finite number of intersection points with itself and the rails, all of them double points. Thus from now on we can restrict attention to such projections. Keeping track of the over/under crossings at the  double points of the onto the plane $\pi$ of the rails $\ell_1,\ell_2$, we get a generic immersion of the unit interval in the plane with its endpoints on the two rails. This projection is actually just a planar knotoid diagram $c_{pr}$ on $\pi$, whose endpoints are on the rails (the leg on $\ell_1$ and the head on $\ell_2$).

\begin{figure}[!h]
	\centering
	\psset{xunit=1.0cm,yunit=1.0cm,algebraic=true,dimen=middle,dotstyle=o,dotsize=5pt 0,linewidth=1.6pt,arrowsize=3pt 2,arrowinset=0.25}
	\begin{pspicture*}(0.8597242462018629,1.8086164125766302)(11.482376605887998,6.307935331842192)
	\psaxes[labelFontSize=\scriptstyle,xAxis=true,yAxis=true,Dx=0.5,Dy=0.5,ticksize=-2pt 0,subticks=2]{->}(0,0)(0.8597242462018629,1.8086164125766302)(11.482376605887998,6.307935331842192)
	\psline[linewidth=0.8pt](4.82466144914252,3.098595068780185)(5.250763258001653,4.215672783897377)
	\psline[linewidth=0.8pt](7.807374111156448,3.098595068780185)(8.23347592001558,4.215672783897377)
	\psline[linewidth=0.8pt](4.82466144914252,3.098595068780185)(7.807374111156448,3.098595068780185)
	\psline[linewidth=0.8pt,linecolor=red](5.7897163177173985,3.579614158436346)(5.7897163177173985,6.007590970387144)
	\psline[linewidth=0.8pt,linecolor=red](7.348194144442746,3.579614158436346)(7.348194144442746,4.56234264157902)
	\psline[linewidth=0.8pt,linecolor=red](7.348194144442746,4.750538252321563)(7.348194144442746,6.007590970387144)
	\parametricplot[linewidth=0.8pt]{1.3124896428446133}{2.0491738013646073}{1.*2.0441623491706924*cos(t)+0.*2.0441623491706924*sin(t)+6.730724779917389|0.*2.0441623491706924*cos(t)+1.*2.0441623491706924*sin(t)+3.123699349255933}
	\parametricplot[linewidth=0.8pt]{-0.897963385319354}{1.212895220174818}{1.*0.21069737658829024*cos(t)+0.*0.21069737658829024*sin(t)+7.336666585591969|0.*0.21069737658829024*cos(t)+1.*0.21069737658829024*sin(t)+4.843622530429749}
	\parametricplot[linewidth=0.8pt]{3.456529644292434}{5.190203848893322}{1.*0.643976505465421*cos(t)+0.*0.643976505465421*sin(t)+7.171847961037518|0.*0.643976505465421*cos(t)+1.*0.643976505465421*sin(t)+5.250697016198361}
	\parametricplot[linewidth=0.8pt]{1.727268012851006}{2.949250896956655}{1.*0.9302669017999806*cos(t)+0.*0.9302669017999806*sin(t)+7.493161332981229|0.*0.9302669017999806*cos(t)+1.*0.9302669017999806*sin(t)+5.088688895717715}
	\psline[linewidth=0.8pt](5.250763258001653,4.215672783897377)(5.718082577653644,4.215672783897378)
	\psline[linewidth=0.8pt](5.861350057781157,4.215672783897378)(7.276560404378989,4.215672783897378)
	\psline[linewidth=0.8pt](7.4198278845065015,4.215672783897378)(8.23347592001558,4.215672783897377)
	\psline[linewidth=0.8pt,linecolor=red](5.789716317717398,2.9373759900107856)(5.7897163177173985,2.227117008155029)
	\psline[linewidth=0.8pt,linecolor=red](7.3481941444427425,2.9373759900107856)(7.348194144442743,2.227117008155029)
	\psline[linewidth=0.8pt](5.973555288270382,4.985574415114078)(6.055658461648811,5.0531773034462235)
	\psline[linewidth=0.8pt](6.055658461648811,5.0531773034462235)(5.949869841797671,5.042229723821125)
	\psline[linewidth=0.8pt,linecolor=red](5.7897163177173985,5.966145079751118)(5.7897163177173985,6.1734378516285355)
	\psline[linewidth=0.8pt,linecolor=red](7.348194144442746,6.007590970387144)(7.348194144442746,6.214883742264562)
	\psline[linewidth=0.8pt]{->}(0.,7.532357133936048)(3.868811445485548,7.532357133936048)
	\parametricplot[linewidth=0.8pt]{1.312489642844614}{2.049173801364607}{1.*2.0441623491706937*cos(t)+0.*2.0441623491706937*sin(t)+10.599536225402938|0.*2.0441623491706937*cos(t)+1.*2.0441623491706937*sin(t)+3.123699349255931}
	\parametricplot[linewidth=0.8pt]{-0.8979633853193576}{1.21289522017483}{1.*0.21069737658828933*cos(t)+0.*0.21069737658828933*sin(t)+11.205478031077519|0.*0.21069737658828933*cos(t)+1.*0.21069737658828933*sin(t)+4.843622530429749}
	\parametricplot[linewidth=0.8pt]{3.4565296442924307}{5.190203848893328}{1.*0.6439765054654198*cos(t)+0.*0.6439765054654198*sin(t)+11.040659406523064|0.*0.6439765054654198*cos(t)+1.*0.6439765054654198*sin(t)+5.250697016198358}
	\parametricplot[linewidth=0.8pt]{1.7272680128510032}{2.9492508969566544}{1.*0.9302669017999797*cos(t)+0.*0.9302669017999797*sin(t)+11.361972778466775|0.*0.9302669017999797*cos(t)+1.*0.9302669017999797*sin(t)+5.088688895717715}
	\psline[linewidth=0.8pt,linecolor=red](11.217005589928295,3.579614158436346)(11.217005589928295,4.56234264157902)
	\psline[linewidth=0.8pt,linecolor=red](11.217005589928295,4.750538252321563)(11.217005589928295,6.007590970387144)
	\psline[linewidth=0.8pt](9.84236673375593,4.985574415114078)(9.924469907134359,5.0531773034462235)
	\psline[linewidth=0.8pt](9.924469907134359,5.0531773034462235)(9.81868128728322,5.042229723821125)
	\psline[linewidth=0.8pt,linecolor=red](11.217005589928295,6.007590970387144)(11.217005589928295,6.214883742264562)
	\psline[linewidth=0.8pt]{->}(3.868811445485548,7.532357133936048)(0.,7.532357133936048)
	\parametricplot[linewidth=0.8pt]{-1.0136908299043377}{0.7595360487821827}{1.*0.2613356941536533*cos(t)+0.*0.2613356941536533*sin(t)+3.3624074994604447|0.*0.2613356941536533*cos(t)+1.*0.2613356941536533*sin(t)+3.5384858157264762}
	\parametricplot[linewidth=0.8pt]{1.6219613144402378}{2.1081175146628013}{1.*2.417377407026493*cos(t)+0.*2.417377407026493*sin(t)+3.1582068857888794|0.*2.417377407026493*cos(t)+1.*2.417377407026493*sin(t)+1.5028866232864277}
	\parametricplot[linewidth=0.8pt]{1.1181887922759253}{2.5037458770342216}{1.*0.3539340668178012*cos(t)+0.*0.3539340668178012*sin(t)+2.8529110567687828|0.*0.3539340668178012*cos(t)+1.*0.3539340668178012*sin(t)+3.715306198104691}
	\parametricplot[linewidth=0.8pt]{3.5397602144799905}{5.044968730984079}{1.*0.7394757223657815*cos(t)+0.*0.7394757223657815*sin(t)+3.259158054439347|0.*0.7394757223657815*cos(t)+1.*0.7394757223657815*sin(t)+4.015621785682471}
	\parametricplot[linewidth=0.8pt]{0.4584359128269141}{1.1506056541748801}{1.*0.9649778905080505*cos(t)+0.*0.9649778905080505*sin(t)+2.6140428529218873|0.*0.9649778905080505*cos(t)+1.*0.9649778905080505*sin(t)+3.1525669947261163}
	\psline[linewidth=0.8pt](2.0896715589730412,3.6366050139586417)(2.1581668145305724,3.703712237432131)
	\psline[linewidth=0.8pt](2.1581668145305724,3.703712237432131)(2.0636099322632973,3.6877756288811985)
	\psline[linewidth=0.8pt,linecolor=red](9.658527763202947,6.1734378516285355)(9.658527763202947,3.579614158436346)
	\parametricplot[linewidth=0.8pt]{0.7145938139315864}{1.1770747742420844}{1.*0.7407898062604519*cos(t)+0.*0.7407898062604519*sin(t)+2.992353926740024|0.*0.7407898062604519*cos(t)+1.*0.7407898062604519*sin(t)+3.2329902236120276}
	\psline[linewidth=0.8pt](1.381951812516105,4.215672783897377)(0.9558500036569724,3.098595068780185)
	\psline[linewidth=0.8pt](0.9558500036569724,3.098595068780185)(3.9385626656709,3.098595068780185)
	\psline[linewidth=0.8pt](3.9385626656709,3.098595068780185)(4.3646644745300325,4.215672783897377)
	\psline[linewidth=0.8pt](4.3646644745300325,4.215672783897377)(1.381951812516105,4.215672783897377)
	\begin{scriptsize}
	\psdots[dotsize=2pt 0,dotstyle=*](7.348194144442746,6.007590970387144)
	\psdots[dotsize=2pt 0,dotstyle=*](5.7897163177173985,4.938390172463857)
	\psdots[dotsize=1pt 0,dotstyle=*](3.868811445485548,7.532357133936048)
	\psdots[dotsize=3pt 0,dotstyle=*,linecolor=darkgray](0.,7.532357133936048)
	\psdots[dotsize=2pt 0,dotstyle=*](9.658527763202947,4.938390172463857)
	\psdots[dotsize=2pt 0,dotstyle=*](11.217005589928295,6.007590970387144)
	\psdots[dotsize=2pt 0,dotstyle=*](11.217005589928295,6.007590970387144)
	\psdots[dotsize=2pt 0,dotstyle=*](11.217005589928295,6.007590970387144)
	\psdots[dotsize=2pt 0,dotstyle=*](1.9209048722318505,3.579614158436346)
	\psdots[dotsize=2pt 0,dotstyle=*](3.479382698957198,3.579614158436346)
	\psdots[dotsize=2pt 0,dotstyle=*](3.479382698957198,3.579614158436346)
	\end{scriptsize}
	\rput[tl](5.72,2.13){$\ell_1$}
	\rput[tl](7.27,2.13){$\ell_2$}
	\rput[tl](9.6,3.5){$\ell_1$}
	\rput[tl](11.118526357978949,3.5){$\ell_2$}
	\end{pspicture*}
	\caption{In the middle we see a rail arc in space and a plane perpendicular to the rails, whereas on the left and right we see  the knotoid diagram projections of this rail arc on the perpendicular plane, and on the plane of the rails respectively.}
	\label{figure_knotoids_rail_and_planar}
\end{figure}
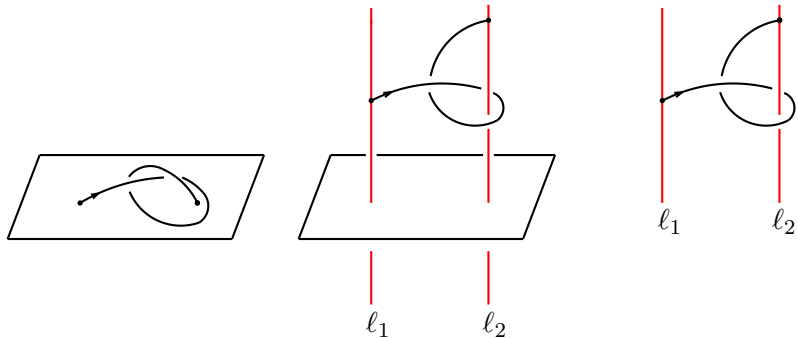

So first we define:

\begin{definition} \rm
	 A \textit{planar rail knotoid diagram} or just \textit{rail knotoid diagram} is an immersion of the unit interval in the rail plane $\pi$ of the rails with only a finite number of transversal intersection points with itself and the rails.  All intersection points,  except  for the endpoints, are double points with additional over/under data.  The endpoints, the first one on $\ell_1$ and the second one on $\ell_2$,  are trivalent.
	 
	 Two rail knotoid diagrams on $\pi$ are \textit{rail equivalent} whenever one can be obtained from the other via a finite sequence of the \textit{rail knotoid equivalence moves} defined locally in Figure \ref{figure_rail_moves}, which include the usual Reidemester moves $\Omega_1, \ \Omega_2, \ \Omega_3$ and their versions where parts of the rails are involved, along with slide \textit{slide moves} which involve the rails and the endpoints of the rail knotoids, and finally some planar rail isotopy moves or just\textit{ planar isotopies} of $\pi$.
	 
	 Clearly, equivalence between rail knotoid diagrams as defined is indeed an equivalence relation  in the set of all planar rail knotoid diagrams. We call the equivalence classes  as \textit{planar rail knotoids} or simply as rail knotoids.
	\end{definition}

	  Although rather obvious, it is no harm to call upon some more terminology: if a move $\delta_1$ is applied to a diagram $c_1$ resulting to a diagram $c_2$, and $\delta_2$ is applied to $c_2$ resulting to $c_3$, we call $\delta_2$ as successive to $\delta_1$, we say that they compose and write $\delta_2 \circ \delta_1$, and we also say that their composition is applied to $c_1$ resulting to $c_3$. Similarly we define the notion of a sequence of successive moves (or just a sequence of moves), and of their composition.

  We prefer the planar isotopies of Figure \ref{figure_rail_moves} instead of plane isotopies in $\pi$, so that the above equivalence relation which we'll try  to investigate can get an entirely diagrammatic setting.

\begin{figure}[!h]
	\centering
	\newrgbcolor{eqeqeq}{0.8784313725490196 0.8784313725490196 0.8784313725490196}
	\psset{xunit=1.0cm,yunit=1.0cm,algebraic=true,dimen=middle,dotstyle=o,dotsize=5pt 0,linewidth=1.6pt,arrowsize=3pt 2,arrowinset=0.25}
	\begin{pspicture*}(-2,-6.818205735001157)(12.5,3.1962160687602714)
	\pscircle[linewidth=0.4pt,linestyle=dotted,linecolor=eqeqeq,fillcolor=eqeqeq,fillstyle=solid,opacity=0.45](1.2019127851903506,2.4540743605874535){0.6055794355355599}
	\pscircle[linewidth=0.4pt,linestyle=dotted,linecolor=eqeqeq,fillcolor=eqeqeq,fillstyle=solid,opacity=0.45](2.800876210249418,2.4540743605874535){0.6055794355355599}
	\pscircle[linewidth=0.4pt,linestyle=dotted,linecolor=eqeqeq,fillcolor=eqeqeq,fillstyle=solid,opacity=0.45](4.399839635308485,2.4540743605874535){0.6055794355355599}
	\pscircle[linewidth=0.4pt,linestyle=dotted,linecolor=eqeqeq,fillcolor=eqeqeq,fillstyle=solid,opacity=0.45](5.998803060367552,2.4540743605874535){0.6055794355355599}
	\pscircle[linewidth=0.4pt,linestyle=dotted,linecolor=eqeqeq,fillcolor=eqeqeq,fillstyle=solid,opacity=0.45](7.59776648542662,2.4540743605874535){0.6055794355355599}
	\pscircle[linewidth=0.4pt,linestyle=dotted,linecolor=eqeqeq,fillcolor=eqeqeq,fillstyle=solid,opacity=0.45](9.196729910485686,2.4540743605874535){0.6055794355355599}
	\pscircle[linewidth=0.4pt,linestyle=dotted,linecolor=eqeqeq,fillcolor=eqeqeq,fillstyle=solid,opacity=0.45](1.9283105506386062,0.7357140552259755){0.6055794355355599}
	\pscircle[linewidth=0.4pt,linestyle=dotted,linecolor=eqeqeq,fillcolor=eqeqeq,fillstyle=solid,opacity=0.45](3.527273975697674,0.7357140552259755){0.6055794355355599}
	\pscircle[linewidth=0.4pt,linestyle=dotted,linecolor=eqeqeq,fillcolor=eqeqeq,fillstyle=solid,opacity=0.45](6.794124581268798,-4.456165556353921){0.6055794355355599}
	\pscircle[linewidth=0.4pt,linestyle=dotted,linecolor=eqeqeq,fillcolor=eqeqeq,fillstyle=solid,opacity=0.45](8.393088006327865,-4.456165556353921){0.6055794355355599}
	\pscircle[linewidth=0.4pt,linestyle=dotted,linecolor=eqeqeq,fillcolor=eqeqeq,fillstyle=solid,opacity=0.45](6.959618287821182,0.7573961006442667){0.6055794355355599}
	\pscircle[linewidth=0.4pt,linestyle=dotted,linecolor=eqeqeq,fillcolor=eqeqeq,fillstyle=solid,opacity=0.45](8.55858171288025,0.7573961006442667){0.6055794355355599}
	\pscircle[linewidth=0.4pt,linestyle=dotted,linecolor=eqeqeq,fillcolor=eqeqeq,fillstyle=solid,opacity=0.45](2.575932038113259,-0.9261513686166785){0.6055794355355599}
	\pscircle[linewidth=0.4pt,linestyle=dotted,linecolor=eqeqeq,fillcolor=eqeqeq,fillstyle=solid,opacity=0.45](4.1748954631723265,-0.9261513686166785){0.6055794355355599}
	\pscircle[linewidth=0.4pt,linestyle=dotted,linecolor=eqeqeq,fillcolor=eqeqeq,fillstyle=solid,opacity=0.45](6.0724903698992865,-0.793516348055761){0.6055794355355599}
	\pscircle[linewidth=0.4pt,linestyle=dotted,linecolor=eqeqeq,fillcolor=eqeqeq,fillstyle=solid,opacity=0.45](7.671453794958354,-0.793516348055761){0.6055794355355599}
	\pscircle[linewidth=0.4pt,linestyle=dotted,linecolor=eqeqeq,fillcolor=eqeqeq,fillstyle=solid,opacity=0.45](6.054168044256003,-2.7122237153944893){0.6055794355355599}
	\pscircle[linewidth=0.4pt,linestyle=dotted,linecolor=eqeqeq,fillcolor=eqeqeq,fillstyle=solid,opacity=0.45](7.6531314693150705,-2.7122237153944893){0.6055794355355599}
	\pscircle[linewidth=0.4pt,linestyle=dotted,linecolor=eqeqeq,fillcolor=eqeqeq,fillstyle=solid,opacity=0.45](9.252094894374137,-2.7122237153944893){0.6055794355355599}
	\pscircle[linewidth=0.4pt,linestyle=dotted,linecolor=eqeqeq,fillcolor=eqeqeq,fillstyle=solid,opacity=0.45](4.486114096850756,-6.012513502764692){0.6055794355355599}
	\pscircle[linewidth=0.4pt,linestyle=dotted,linecolor=eqeqeq,fillcolor=eqeqeq,fillstyle=solid,opacity=0.45](6.085077521909823,-6.012513502764692){0.6055794355355599}
	\pscircle[linewidth=0.4pt,linestyle=dotted,linecolor=eqeqeq,fillcolor=eqeqeq,fillstyle=solid,opacity=0.45](0.9781930543758244,-2.8406259353567025){0.6055794355355599}
	\pscircle[linewidth=0.4pt,linestyle=dotted,linecolor=eqeqeq,fillcolor=eqeqeq,fillstyle=solid,opacity=0.45](2.577156479434892,-2.8406259353567025){0.6055794355355599}
	\pscircle[linewidth=0.8pt,linecolor=eqeqeq,fillcolor=eqeqeq,fillstyle=solid,opacity=0.45](4.176119904493959,-2.8406259353567025){0.6055794355355599}
	\pscircle[linewidth=0.4pt,linestyle=dotted,linecolor=eqeqeq,fillcolor=eqeqeq,fillstyle=solid,opacity=0.45](1.8413084330996161,-4.496841871047476){0.6055794355355599}
	\pscircle[linewidth=0.4pt,linestyle=dotted,linecolor=eqeqeq,fillcolor=eqeqeq,fillstyle=solid,opacity=0.45](3.4402718581586837,-4.496841871047476){0.6055794355355599}
	\psline[linewidth=0.4pt](1.3227311151030459,0.7357140552259755)(1.9283105506386062,1.15722537516398)
	\psline[linewidth=0.4pt](1.9283105506386062,1.15722537516398)(2.5338899861741666,0.7357140552259755)
	\psline[linewidth=0.4pt](1.5675446466473142,1.2221032155296498)(1.653132278104357,1.1067128869641871)
	\psline[linewidth=0.4pt](1.7765456109401818,0.9403254687958016)(1.8528228390660708,0.8374875439032694)
	\psline[linewidth=0.4pt](2.0125703983283754,0.6221138615498806)(2.2890764546298983,0.24932489492230125)
	\psline[linewidth=0.4pt](2.2041184799509934,1.1075617982061026)(2.2890764546298983,1.2221032155296498)
	\psline[linewidth=0.4pt](1.5675446466473142,0.24932489492230125)(2.049870149557713,0.899602245853167)
	\psline[linewidth=0.4pt](2.9216945401621133,0.7357140552259755)(3.527273975697674,0.31420273528797105)
	\psline[linewidth=0.4pt](3.527273975697674,0.31420273528797105)(4.132853411233234,0.7357140552259755)
	\psline[linewidth=0.4pt](3.1665080717063816,1.2221032155296498)(3.451786264125138,0.8374875439032694)
	\psline[linewidth=0.4pt](3.385558738975869,0.544651778473185)(3.8880398796889657,1.2221032155296498)
	\psline[linewidth=0.4pt](3.2572598635841747,0.3716776099631304)(3.1665080717063816,0.24932489492230125)
	\psline[linewidth=0.4pt](6.037065243343325,1.849704889073963)(5.91524850033442,2.4693007518466326)
	\psline[linewidth=0.4pt](5.91524850033442,2.4693007518466326)(6.538302508736716,2.72915034636763)
	\psline[linewidth=0.4pt](5.613768576130681,2.632624235269829)(5.896019862401664,2.2601304115062484)
	\psline[linewidth=0.4pt](6.053459312263512,2.052353775781831)(6.185563473335151,1.8780127311659658)
	\psline[linewidth=0.4pt](5.613768576130681,2.632624235269829)(6.042656241063381,2.599646126840949)
	\psline[linewidth=0.4pt](6.337108495589661,2.577005050468609)(6.595548706118908,2.5571330187407826)
	\psline[linewidth=0.4pt](7.514211925393488,2.4693007518466326)(7.636028668402393,1.849704889073963)
	\psline[linewidth=0.4pt](7.514211925393488,2.4693007518466326)(8.137265933795783,2.72915034636763)
	\psline[linewidth=0.4pt](8.811695426248816,2.632624235269829)(9.383490323453286,1.8780127311659658)
	\psline[linewidth=0.4pt](8.811695426248816,2.632624235269829)(9.793475556237045,2.5571330187407826)
	\psline[linewidth=0.4pt](7.784526898394218,1.8780127311659658)(8.194512131177976,2.5571330187407826)
	\psline[linewidth=0.4pt](9.23499209346146,1.849704889073963)(9.200778042426252,2.0237276378890243)
	\psline[linewidth=0.4pt](9.148872881110874,2.287732588384257)(9.113175350452554,2.4693007518466326)
	\psline[linewidth=0.4pt](9.113175350452554,2.4693007518466326)(9.284705597001457,2.540838795760539)
	\psline[linewidth=0.4pt](9.544873727518096,2.6493439691649994)(9.73622935885485,2.72915034636763)
	\psline[linewidth=0.4pt](2.800876210249418,1.8484949250518938)(2.3914611766415104,2.4540743605874535)
	\psline[linewidth=0.4pt](4.399839635308485,1.8484949250518938)(4.8092546689163935,2.4540743605874535)
	\psline[linewidth=0.4pt](4.8092546689163935,2.4540743605874535)(3.990424601700578,2.4540743605874535)
	\psline[linewidth=0.4pt](1.2019127851903506,1.8484949250518938)(1.6113278187982583,2.4540743605874535)
	\psline[linewidth=0.4pt](0.792497751582443,2.4540743605874535)(1.0685014158182162,2.4540743605874535)
	\psline[linewidth=0.4pt](1.335324154562485,2.4540743605874535)(1.6113278187982583,2.4540743605874535)
	\psline[linewidth=0.4pt](1.2019127851903506,3.0596537961230132)(1.2019127851903506,1.8484949250518938)
	\psline[linewidth=0.4pt](2.800876210249418,1.8484949250518938)(2.800876210249418,3.0596537961230132)
	\psline[linewidth=0.4pt](4.399839635308485,1.8484949250518938)(4.399839635308486,2.2932918511635787)
	\psline[linewidth=0.4pt](4.399839635308486,2.6148568700113293)(4.399839635308485,3.0596537961230132)
	\psline[linewidth=0.4pt](3.8880398796889657,0.24932489492230125)(3.802755135622831,0.3643068668717411)
	\psline[linewidth=0.4pt](3.6890390457067843,0.5176203393350868)(3.6115338233874428,0.6221138615498806)
	\psline[linewidth=0.4pt](6.794124581268798,-4.456165556353921)(7.381952019835337,-4.31061365766528)
	\psline[linewidth=0.8pt,linecolor=red](6.794124581268798,-3.8505861208183596)(6.794124581268798,-5.061744991889482)
	\psline[linewidth=0.8pt,linecolor=red](8.393088006327865,-5.061744991889482)(8.393088006327865,-3.8505861208183596)
	\psline[linewidth=0.4pt](8.980915444894404,-4.31061365766528)(8.393088006327865,-4.005690626324728)
	\psline[linewidth=0.4pt](6.297386629642253,-4.109787352424411)(6.691510450497595,-4.259389112322574)
	\psline[linewidth=0.4pt](6.867134831921718,-4.3260527219594005)(6.967081380735694,-4.363990493814622)
	\psline[linewidth=0.4pt](7.146907137566169,-4.4322488640863655)(7.395588872874564,-4.526643628594447)
	\psline[linewidth=0.4pt](7.896350054701321,-4.109787352424411)(8.275271171934888,-4.253618460825053)
	\psline[linewidth=0.4pt](8.498989755297437,-4.338537696917726)(8.99455229793363,-4.526643628594447)
	\psline[linewidth=0.4pt](6.96042131165065,1.3629750037578878)(6.958815263991713,0.1518171975306455)
	\psline[linewidth=0.4pt](8.559384736709717,1.3629750037578878)(8.557778689050782,0.1518171975306455)
	\psline[linewidth=0.4pt](2.142380331405235,-1.3489508132167476)(3.1814807121525703,-0.9200475900492665)
	\psline[linewidth=0.4pt](3.1814807121525703,-0.9200475900492665)(2.5177827988334274,-0.32337021536556343)
	\psline[linewidth=0.4pt](4.116746223892495,-0.32337021536556343)(3.7413437564643024,-1.3489508132167476)
	\psline[linewidth=0.4pt](6.073293393728754,-0.18793744494213982)(6.071687346069819,-1.3990952511693822)
	\psline[linewidth=0.4pt](5.595980222008932,-1.1672335294925795)(5.965914461075204,-0.8771016576890154)
	\psline[linewidth=0.4pt](8.244192651267518,-0.9902310023579743)(8.147963942848708,-0.41979916661894245)
	\psline[linewidth=0.4pt](7.6722568187878215,-0.18793744494213982)(7.670650771128886,-1.3990952511693822)
	\psline[linewidth=0.4pt](7.1949436470679995,-1.1672335294925795)(7.526502790707905,-1.1113013283811553)
	\psline[linewidth=0.4pt](7.861773763104863,-1.0547429622053284)(8.244192651267518,-0.9902310023579743)
	\psline[linewidth=0.4pt](6.24620752059624,-0.6572735330535524)(6.549000517789641,-0.41979916661894245)
	\psline[linewidth=0.8pt,linecolor=red](6.054168044256003,-2.1066442798589353)(6.054168044256003,-3.317803150930043)
	\psline[linewidth=0.8pt,linecolor=red](7.6531314693150705,-2.1066442798589353)(7.6531314693150705,-3.317803150930043)
	\psline[linewidth=0.4pt](8.944977353685925,-2.7108432494062917)(9.632029257101959,-3.1837916667675366)
	\psline[linewidth=0.4pt](8.944977353685925,-2.7108432494062917)(9.857488078611562,-2.7272418784792833)
	\psline[linewidth=0.8pt,linecolor=red](9.252094894374137,-2.850204289967203)(9.252094894374137,-2.75719931436935)
	\psline[linewidth=0.8pt,linecolor=red](9.252094894374137,-2.5786861646899446)(9.252094894374137,-2.1066442798589353)
	\psline[linewidth=0.4pt](6.188535028924575,-2.7187771188667105)(6.659561228493426,-2.7272418784792833)
	\psline[linewidth=0.4pt](6.165659414816347,-2.9990025880005913)(6.434102406983824,-3.1837916667675366)
	\psline[linewidth=0.4pt](8.03306583204289,-3.1837916667675366)(7.842169443609521,-2.877307753512482)
	\psline[linewidth=0.4pt](7.842169443609521,-2.877307753512482)(8.258524653552493,-2.7272418784792833)
	\psline[linewidth=0.4pt](5.74705050356779,-2.7108432494062917)(5.962823054030592,-2.7147208739892372)
	\psline[linewidth=0.4pt](5.74705050356779,-2.7108432494062917)(5.940131001103015,-2.8437547736718916)
	\psline[linewidth=0.8pt,linecolor=red](9.252094894374137,-3.317803150930043)(9.252094894374137,-3.095282231407122)
	\psline[linewidth=0.8pt,linecolor=red](4.486917120680223,-5.406934599651071)(4.485311073021288,-6.618092405878313)
	\psline[linewidth=0.4pt](4.009603948960401,-6.386230684201511)(4.3795381880266735,-6.096098812397946)
	\psline[linewidth=0.4pt](6.657816378218986,-6.209228157066905)(6.561587669800177,-5.638796321327874)
	\psline[linewidth=0.8pt,linecolor=red](6.085880545739291,-5.406934599651071)(6.084274498080355,-6.618092405878313)
	\psline[linewidth=0.4pt](5.608567374019469,-6.386230684201511)(5.940126517659374,-6.330298483090086)
	\psline[linewidth=0.4pt](6.275397490056332,-6.27374011691426)(6.657816378218986,-6.209228157066905)
	\psline[linewidth=0.4pt](4.659831247547709,-5.876270687762483)(4.96262424474111,-5.638796321327874)
	\psline[linewidth=0.4pt](4.585534938101867,-2.8406259353567025)(3.7667048708860515,-2.8406259353567025)
	\psline[linewidth=0.4pt](0.5687780207679167,-2.8406259353567025)(0.84478168500369,-2.8406259353567025)
	\psline[linewidth=0.4pt](1.1116044237479588,-2.8406259353567025)(1.387608087983732,-2.8406259353567025)
	\psline[linewidth=0.8pt,linecolor=red](0.9781930543758244,-2.235046499821143)(0.9781930543758244,-3.446205370892262)
	\psline[linewidth=0.8pt,linecolor=red](2.577156479434892,-3.446205370892262)(2.577156479434892,-2.235046499821143)
	\psline[linewidth=0.8pt,linecolor=red](4.176119904493959,-3.446205370892262)(4.17611990449396,-3.0014084447805773)
	\psline[linewidth=0.8pt,linecolor=red](4.17611990449396,-2.6798434259328268)(4.176119904493959,-2.235046499821143)
	\psline[linewidth=0.4pt](1.2357289975640557,-4.496841871047476)(1.8413084330996161,-4.075330551109471)
	\psline[linewidth=0.4pt](1.8413084330996161,-4.075330551109471)(2.4468878686351765,-4.496841871047476)
	\psline[linewidth=0.4pt](1.480542529108324,-4.0104527107438015)(1.5661301605653668,-4.125843039309264)
	\psline[linewidth=0.4pt](1.6895434934011917,-4.2922304574776495)(1.7658207215270807,-4.395068382370182)
	\psline[linewidth=0.4pt](1.9255682807893852,-4.61044206472357)(2.202074337090908,-4.98323103135115)
	\psline[linewidth=0.8pt,linecolor=red](2.1171163624120033,-4.124994128067349)(2.202074337090908,-4.0104527107438015)
	\psline[linewidth=0.8pt,linecolor=red](1.480542529108324,-4.98323103135115)(1.962868032018723,-4.332953680420284)
	\psline[linewidth=0.4pt](2.834692422623123,-4.496841871047476)(3.4402718581586837,-4.9183531909854805)
	\psline[linewidth=0.4pt](3.4402718581586837,-4.9183531909854805)(4.045851293694244,-4.496841871047476)
	\psline[linewidth=0.4pt](3.0795059541673915,-4.0104527107438015)(3.364784146586148,-4.395068382370182)
	\psline[linewidth=0.8pt,linecolor=red](3.298556621436879,-4.687904147800266)(3.8010377621499756,-4.0104527107438015)
	\psline[linewidth=0.8pt,linecolor=red](3.1702577460451846,-4.860878316310321)(3.0795059541673915,-4.98323103135115)
	\psline[linewidth=0.4pt](3.8010377621499756,-4.98323103135115)(3.715753018083841,-4.868249059401711)
	\psline[linewidth=0.4pt](3.602036928167794,-4.714935586938364)(3.5245317058484527,-4.61044206472357)
	\begin{scriptsize}
	\psdots[dotsize=2pt 0,dotstyle=*](0.39261628860097386,6.117624065117547)
	\psdots[dotsize=2pt 0,dotstyle=*](1.9915797136600413,6.117624065117547)
	\psdots[dotsize=2pt 0,dotstyle=*](0.39261628860097386,4.306835161430932)
	\psdots[dotsize=2pt 0,dotstyle=*](1.1190140540492295,4.399263759756069)
	\psdots[dotsize=2pt 0,dotstyle=*](1.3227311151030459,0.7357140552259755)
	\psdots[dotsize=2pt 0,dotstyle=*](2.5338899861741666,0.7357140552259755)
	\psdots[dotsize=2pt 0,dotstyle=*](1.9283105506386062,1.15722537516398)
	\psdots[dotsize=2pt 0,dotstyle=*](2.9216945401621133,0.7357140552259755)
	\psdots[dotsize=2pt 0,dotstyle=*](4.132853411233234,0.7357140552259755)
	\psdots[dotsize=2pt 0,dotstyle=*](3.527273975697674,0.31420273528797105)
	\psdots[dotsize=2pt 0,dotstyle=*](6.037065243343325,1.849704889073963)
	\psdots[dotsize=2pt 0,dotstyle=*](6.538302508736716,2.72915034636763)
	\psdots[dotsize=2pt 0,dotstyle=*](5.613768576130681,2.632624235269829)
	\psdots[dotsize=2pt 0,dotstyle=*](5.91524850033442,2.4693007518466326)
	\psdots[dotsize=2pt 0,dotstyle=*](6.185563473335151,1.8780127311659658)
	\psdots[dotsize=2pt 0,dotstyle=*](6.595548706118908,2.5571330187407826)
	\psdots[dotsize=2pt 0,dotstyle=*](7.784526898394218,1.8780127311659658)
	\psdots[dotsize=2pt 0,dotstyle=*](7.636028668402393,1.849704889073963)
	\psdots[dotsize=2pt 0,dotstyle=*](8.194512131177976,2.5571330187407826)
	\psdots[dotsize=2pt 0,dotstyle=*](8.137265933795783,2.72915034636763)
	\psdots[dotsize=2pt 0,dotstyle=*](7.514211925393488,2.4693007518466326)
	\psdots[dotsize=2pt 0,dotstyle=*](9.23499209346146,1.849704889073963)
	\psdots[dotsize=2pt 0,dotstyle=*](9.383490323453286,1.8780127311659658)
	\psdots[dotsize=2pt 0,dotstyle=*](9.793475556237045,2.5571330187407826)
	\psdots[dotsize=2pt 0,dotstyle=*](9.73622935885485,2.72915034636763)
	\psdots[dotsize=2pt 0,dotstyle=*](9.113175350452554,2.4693007518466326)
	\psdots[dotsize=2pt 0,dotstyle=*](8.811695426248816,2.632624235269829)
	\psdots[dotsize=2pt 0,dotstyle=*](2.800876210249418,1.8484949250518938)
	\psdots[dotsize=2pt 0,dotstyle=*](2.3914611766415104,2.4540743605874535)
	\psdots[dotsize=2pt 0,dotstyle=*](3.990424601700578,2.4540743605874535)
	\psdots[dotsize=2pt 0,dotstyle=*](4.399839635308485,1.8484949250518938)
	\psdots[dotsize=2pt 0,dotstyle=*](4.8092546689163935,2.4540743605874535)
	\psdots[dotsize=2pt 0,dotstyle=*](0.792497751582443,2.4540743605874535)
	\psdots[dotsize=2pt 0,dotstyle=*](1.6113278187982583,2.4540743605874535)
	\psdots[dotsize=2pt 0,dotstyle=*](1.2019127851903506,1.8484949250518938)
	\psdots[dotsize=1pt 0,dotstyle=*](0.39261628860097386,4.416553163732603)
	\psdots[dotsize=1pt 0,dotstyle=*](20.062072706363153,1.6973975772734986)
	\psdots[dotsize=1pt 0,dotstyle=*](15.841905056907082,1.2397890369710258)
	\psdots[dotsize=1pt 0,dotstyle=*](15.910828812360071,-3.9520905746088704)
	\psdots[dotsize=2pt 0,dotstyle=*](6.794124581268798,-4.456165556353921)
	\psdots[dotsize=2pt 0,dotstyle=*](7.381952019835337,-4.31061365766528)
	\psdots[dotsize=2pt 0,dotstyle=*](8.980915444894404,-4.31061365766528)
	\psdots[dotsize=2pt 0,dotstyle=*](8.393088006327865,-4.005690626324728)
	\psdots[dotsize=1pt 0,dotstyle=*](16.683678791537552,-1.0041876569566486)
	\psdots[dotsize=2pt 0,dotstyle=*](8.558589445174942,0.7632272034920484)
	\psdots[dotsize=1pt 0,dotstyle=*](15.480337815186614,-3.2911004608386305)
	\psdots[dotsize=2pt 0,dotstyle=*](2.142380331405235,-1.3489508132167476)
	\psdots[dotsize=2pt 0,dotstyle=*](3.1814807121525703,-0.9200475900492665)
	\psdots[dotsize=2pt 0,dotstyle=*](3.1814807121525703,-0.9200475900492665)
	\psdots[dotsize=2pt 0,dotstyle=*](2.5177827988334274,-0.32337021536556343)
	\psdots[dotsize=2pt 0,dotstyle=*](4.116746223892495,-0.32337021536556343)
	\psdots[dotsize=2pt 0,dotstyle=*](3.7413437564643024,-1.3489508132167476)
	\psdots[dotsize=1pt 0,dotstyle=*](16.88593046737494,-0.05959323796191729)
	\psdots[dotsize=2pt 0,dotstyle=*](5.595980222008932,-1.1672335294925795)
	\psdots[dotsize=2pt 0,dotstyle=*](8.244192651267518,-0.9902310023579743)
	\psdots[dotsize=2pt 0,dotstyle=*](8.147963942848708,-0.41979916661894245)
	\psdots[dotsize=2pt 0,dotstyle=*](7.1949436470679995,-1.1672335294925795)
	\psdots[dotsize=2pt 0,dotstyle=*](8.244192651267518,-0.9902310023579743)
	\psdots[dotsize=2pt 0,dotstyle=*](6.549000517789641,-0.41979916661894245)
	\psdots[dotsize=2pt 0,dotstyle=*](8.244192651267518,-0.9902310023579743)
	\psdots[dotsize=1pt 0,dotstyle=*](13.220542554433694,-3.7091625840779257)
	\psdots[dotsize=1pt 0,dotstyle=*](13.275907538322144,-1.8640397536731463)
	\psdots[dotsize=2pt 0,dotstyle=*](8.944977353685925,-2.7108432494062917)
	\psdots[dotsize=2pt 0,dotstyle=*](9.632029257101959,-3.1837916667675366)
	\psdots[dotsize=2pt 0,dotstyle=*](8.944977353685925,-2.7108432494062917)
	\psdots[dotsize=2pt 0,dotstyle=*](9.857488078611562,-2.7272418784792833)
	\psdots[dotsize=2pt 0,dotstyle=*](6.659561228493426,-2.7272418784792833)
	\psdots[dotsize=2pt 0,dotstyle=*](6.434102406983824,-3.1837916667675366)
	\psdots[dotsize=2pt 0,dotstyle=*](8.03306583204289,-3.1837916667675366)
	\psdots[dotsize=2pt 0,dotstyle=*](7.842169443609521,-2.877307753512482)
	\psdots[dotsize=2pt 0,dotstyle=*](7.842169443609521,-2.877307753512482)
	\psdots[dotsize=2pt 0,dotstyle=*](8.258524653552493,-2.7272418784792833)
	\psdots[dotsize=2pt 0,dotstyle=*](5.74705050356779,-2.7108432494062917)
	\psdots[dotsize=2pt 0,dotstyle=*](5.74705050356779,-2.7108432494062917)
	\psdots[dotsize=2pt 0,dotstyle=*](5.74705050356779,-2.7108432494062917)
	\psdots[dotsize=1pt 0,dotstyle=*](14.419363940263123,-1.3420270484392156)
	\psdots[dotsize=1pt 0,dotstyle=*](17.849733146086084,-4.56110539738179)
	\psdots[dotsize=2pt 0,dotstyle=*](4.009603948960401,-6.386230684201511)
	\psdots[dotsize=2pt 0,dotstyle=*](6.657816378218986,-6.209228157066905)
	\psdots[dotsize=2pt 0,dotstyle=*](6.561587669800177,-5.638796321327874)
	\psdots[dotsize=2pt 0,dotstyle=*](5.608567374019469,-6.386230684201511)
	\psdots[dotsize=2pt 0,dotstyle=*](6.657816378218986,-6.209228157066905)
	\psdots[dotsize=2pt 0,dotstyle=*](4.96262424474111,-5.638796321327874)
	\psdots[dotsize=2pt 0,dotstyle=*](6.657816378218986,-6.209228157066905)
	\psdots[dotsize=1pt 0,dotstyle=*](12.9900434378369,1.6160449478863925)
	\psdots[dotsize=1pt 0,dotstyle=*](12.766323707022373,-3.6786553480577635)
	\psdots[dotsize=2pt 0,dotstyle=*](2.167741445826984,-2.8406259353567025)
	\psdots[dotsize=2pt 0,dotstyle=*](4.585534938101867,-2.8406259353567025)
	\psdots[dotsize=2pt 0,dotstyle=*](4.585534938101867,-2.8406259353567025)
	\psdots[dotsize=2pt 0,dotstyle=*](3.7667048708860515,-2.8406259353567025)
	\psdots[dotsize=2pt 0,dotstyle=*](0.5687780207679167,-2.8406259353567025)
	\psdots[dotsize=2pt 0,dotstyle=*](1.387608087983732,-2.8406259353567025)
	\psdots[dotsize=1pt 0,dotstyle=*](16.706276744145104,2.436350627539713)
	\psdots[dotsize=1pt 0,dotstyle=*](16.619274626606114,-2.7962052987337382)
	\psdots[dotsize=2pt 0,dotstyle=*](1.2357289975640557,-4.496841871047476)
	\psdots[dotsize=2pt 0,dotstyle=*](1.8413084330996161,-4.075330551109471)
	\psdots[dotsize=2pt 0,dotstyle=*](1.8413084330996161,-4.075330551109471)
	\psdots[dotsize=2pt 0,dotstyle=*](2.4468878686351765,-4.496841871047476)
	\psdots[dotsize=2pt 0,dotstyle=*](2.834692422623123,-4.496841871047476)
	\psdots[dotsize=2pt 0,dotstyle=*](3.4402718581586837,-4.9183531909854805)
	\psdots[dotsize=2pt 0,dotstyle=*](3.4402718581586837,-4.9183531909854805)
	\psdots[dotsize=2pt 0,dotstyle=*](4.045851293694244,-4.496841871047476)
	\psline[linewidth=0.4pt](0.9781930543758244,-3.2252452405218657)(1.387608087983732,-2.8406259353567025)
	\psline[linewidth=0.4pt](2.577156479434892,-3.2252452405218657)(2.167741445826984,-2.8406259353567025)
	\psline[linewidth=0.4pt](4.176119904493959,-3.2252452405218657)(4.585534938101867,-2.8406259353567025)
	\psdots[dotsize=2pt 0,dotstyle=*](0.9781930543758244,-3.2252452405218657)
	\psdots[dotsize=2pt 0,dotstyle=*](2.577156479434892,-3.2252452405218657)
	\psdots[dotsize=2pt 0,dotstyle=*](4.176119904493959,-3.2252452405218657)
	\end{scriptsize}
	\rput[tl](0.1,2.65){$\Omega_1$}
	\rput[tl](9.931152635914795,2.65){$\Omega_2$}
	\rput[tl](0.7,0.94){$\Omega_3$}
	\rput[tl](9.22,0.94){$planar \ isotopy \ 1$}
	\rput[tl](-1,-0.8){$planar \ isotopy \ 2$}
	\rput[tl](8.4,-0.6){$planar \ isotopy \ 3$}
	\rput[tl](-1.4,-2.6){$rail \ \Omega_1$}
	\rput[tl](10.033080593204987,-2.6){$rail \ \Omega_2$}
	\rput[tl](-0.45,-4.3){$rail \ \Omega_3$}
	\rput[tl](9.2,-4.3){$slide \ move$}
	\rput[tl](-0.15,-6){$rail \ planar \ isotopy \ 3$}
	\rput[tl](1.8788440099896417,2.45){$\sim$}
	\rput[tl](3.5,2.45){$\sim$}
	\rput[tl](6.73,2.45){$\sim$}
	\rput[tl](8.3,2.45){$\sim$}
	\rput[tl](2.6,0.9){$\sim$}
	\rput[tl](7.65,0.9){$\sim$}
	\rput[tl](6.75,-0.75){$\sim$}
	\rput[tl](3.23,-0.9){$\sim$}
	\rput[tl](6.74,-2.65){$\sim$}
	\rput[tl](8.385,-2.65){$\sim$}
		\rput[tl](1.65,-2.75){$\sim$}
		\rput[tl](3.25,-2.75){$\sim$}
		\rput[tl](2.46,-4.35){$\sim$}
		\rput[tl](7.43,-4.35){$\sim$}		
	\rput[tl](5.157,-6){$\sim$}
	\normalsize
	\end{pspicture*}
	\caption{The set of rail knotoid equivalence moves, which generate the equivalence in the set of planar rail knotoid diagrams. The moves are split in two parts: those which do not involve the rails, and those which do involve them. Dots denote vertices, and rails get a red colour. The moving part in 
		$\Omega_3$-moves can be on top, bottom or in the middle, and especially in the rail $\Omega_3$ moves, the rail part can be on top or bottom of the other non-moving part. The non-moving part in planar isotopies of type 3 can be either on top or bottom, whereas the non-moving arc in slide moves can only be on top or bottom of all other parts.}
	\label{figure_rail_moves}
\end{figure}
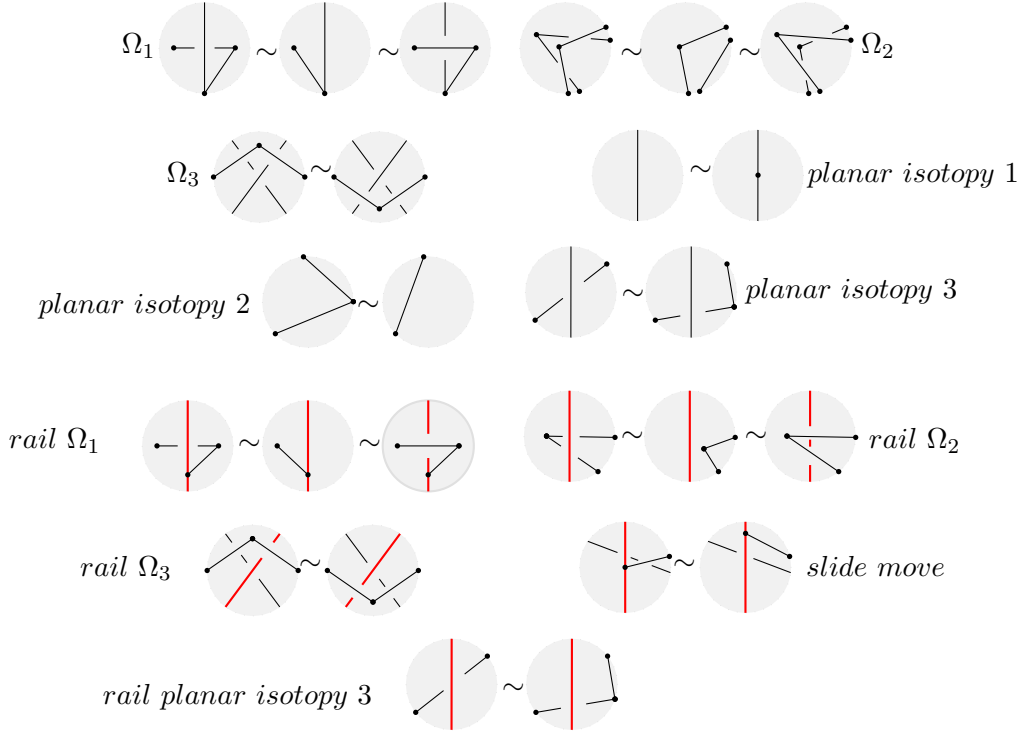

	For a triangle move $\Delta$ between two rail arcs $c_1,c_2$ in $\mathbb{R}^3$, let us denote $\Delta_{pr}$ the projection on $\pi$ of the edges of the triangle $\Delta$, keeping track of over/under data with respect to the rails and to the  projections $c_{1pr},c_{2pr}$ of $c_1,c_2$.
	Let us call $\Delta$ as \textit{nice} whenever $\Delta_{pr}$ is a finite composition of Reidermeister, slide or planar isotopy moves between $c_{1pr},c_{2pr}$. And let us call $\Delta=ABC$ which exchanges edge $AB$ with edges $AC,CB$ as \textit{good},  whenever the following entering-exiting condition is satisfied: for $\Delta_{pr}$ the entering and exiting edges (if any) $MA_{pr}, BN_{pr}$ of $c_{1pr}$ (and $c_{2pr}$) do not intersect the interior of $ABC_{pr}$. The following is true:

\begin{lemma}\label{lemma_nice_and_good_moves}
	Any triangle move $\Delta$ between two rail arcs $c_1,c_2$ is a composition of other triangle moves $\Delta_i$, each being a nice or good one,  with triangles satisfying $\Delta_i\subseteq \Delta$ for all  $i$.
\end{lemma}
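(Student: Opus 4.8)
\emph{Proof sketch.} Write $\Delta=ABC$, let $P$ be the plane of $\Delta$, and recall that $\Delta$ exchanges the edge $AB$ of $c_1$ with the two edges $AC,CB$; a space slide move is handled in exactly the same way, the only difference being that the edge $MA$ is absent and the sub-moves near the sliding endpoint are slide moves. If $P$ is parallel to the direction of projection, then $\Delta_{pr}$ is a segment, so $ABC_{pr}$ has empty interior, $\Delta$ is vacuously good, and the trivial decomposition suffices. So assume $P$ is not parallel to the projection direction; then projection restricts to an affine isomorphism $P\to\pi$, carrying the angular sector of $\Delta$ at each vertex onto the corresponding sector of $\Delta_{pr}$ and carrying every simple arc in $P$ to a simple arc in $\pi$.

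The plan is the usual device for turning a triangle move into local moves: realise $\Delta$ as a composite of small triangle moves via a shelled triangulation chosen with care near $A$ and $B$. Fix a triangulation $\mathcal{T}$ of the closed triangle $\Delta$ and a shelling $t_1,\dots,t_m$ of its $2$-simplices that grows out of $AB$, so that $D_j:=t_1\cup\cdots\cup t_j$ is a disk bounded by a subarc of $AB$ and a simple arc $\gamma_j$, with $t_{j+1}\cap D_j$ a boundary path of $t_{j+1}$. Peeling $t_{j+1}$ is a triangle move $\delta_{j+1}$ with triangle $t_{j+1}\subseteq\Delta$, performed on the arc obtained from $c_1$ by replacing $AB$ with $(AB\cap\partial D_j)\cup\gamma_j$, and $\delta_m\circ\cdots\circ\delta_1=\Delta$ as a move in $\mathbb{R}^3$; by the first item of Lemma~\ref{lemma_triangle_moves} it suffices to choose $\mathcal{T}$ so that every $\delta_i$ is nice or good. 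Since each $\gamma_j\subseteq P$ is simple, every $\gamma_{j,pr}$ is a simple arc, so throughout the process the moving strand never crosses itself in the plane, and the only crossings of the intermediate diagrams that ever change are those of $\gamma_{j,pr}$ against the ``far part'' of $c_{pr}$ (the projection of everything outside $MA\cup AB\cup BN$), against the rails, or against the segments $MA_{pr},BN_{pr}$. Now require $\mathcal{T}$ to be generic and fine — interior vertices and edges projecting off all crossings and vertices of $c_{pr}$ and off its intersections with the rails, and each $t_{i,pr}$ small enough to produce no ``forbidden'' configuration — and, crucially, \emph{aligned at $A$ and $B$}: whenever the direction of $MA$ at $A$ projects into the open sector of $\Delta_{pr}$ at $A_{pr}$, insist that $\mathcal{T}$ contain the edge $AA'$ with $A'\in\Delta$ a point near $A$ whose projection lies on the ray $A_{pr}M_{pr}$ (such $A'$ exists because $P\to\pi$ maps the sector at $A$ isomorphically onto the sector at $A_{pr}$), and symmetrically at $B$. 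These are finitely many compatible constraints, so such a $\mathcal{T}$ exists; we may also arrange the shelling near $A,B$ so that the second edge of the moving arc adjacent to a peeled triangle at $A$ or $B$ likewise misses that triangle in projection.

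With such a $\mathcal{T}$ every $\delta_i$ is nice or good. If the triangle of $\delta_i$ has neither $A$ nor $B$ as a vertex, then $\delta_{i,pr}$ merely slides an embedded piece of strand across a small disk, over or under whatever lies in it; each of $MA,BN$ lies entirely on one side of $P$ (in the degenerate case $MA\subset P$ it meets $\Delta_{pr}$ only at $A_{pr}$ and is irrelevant), so each of its crossings with the moving strand is an honest over/under crossing, as are those of the far part of $c$ and of the rails. Hence $\delta_{i,pr}$ is a composition of planar isotopies, of $\Omega_1,\Omega_2,\Omega_3$ moves and their rail versions, and of slide moves when $\delta_i$ sits at the leg or head of $c$ on a rail; i.e. $\delta_i$ is nice. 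If the triangle of $\delta_i$ has $A$ (resp. $B$) as a vertex, then by the alignment requirement $MA_{pr}$ (resp. $BN_{pr}$) runs along the boundary of $t_{i,pr}$ and not through its interior, while genericity keeps the other entering/exiting edge and the rest of the picture out of that interior too; so $\delta_i$ satisfies the entering-exiting condition and is good. This yields the required decomposition.

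The real obstacle is exactly the behaviour of the sub-moves abutting $A$ and $B$. There the moving strand is pinned while the neighbouring external edge $MA$ (or $BN$) stays fixed beside it, so if $MA_{pr}$ points into $\Delta_{pr}$ then a small sub-move at $A$ is genuinely of forbidden type in the plane — it alters the number of crossings between two arcs sharing the endpoint $A_{pr}$ — and cannot be written as Reidemeister, slide and planar isotopy moves; this is precisely the phenomenon the notion of a good move is designed to quarantine, and the only way around it is the alignment device, routing a triangulation edge out of $A$ along the preimage of the ray $A_{pr}M_{pr}$ so that $MA_{pr}$ is confined to the boundary of every small triangle at $A$. The remaining work — legitimacy of the shelling, the inclusions $\delta_i\subseteq\Delta$, the identity $\delta_m\circ\cdots\circ\delta_1=\Delta$, and keeping the entering/exiting edges clear of the relevant interiors — is routine but fiddly.
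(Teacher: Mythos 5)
Your overall skeleton (subdivide $\Delta$, peel small triangles by a shelling, invoke Lemma~\ref{lemma_triangle_moves} to legitimise the composition, treat the corners $A,B$ specially) is sound and close in spirit to what the paper does across this lemma and Lemma~\ref{lemma_good_moves}. But the corner treatment, which you yourself identify as the crux, rests on a false claim. A small sub-move at $A$ whose projected triangle contains the direction of $M_{pr}A_{pr}$ is \emph{not} of forbidden type: $A_{pr}$ is an ordinary interior point of the diagram (if $A$ were the leg on $\ell_1$ there would be no entering edge at all), so the single new crossing created between the outgoing strand and the incoming edge $M_{pr}A_{pr}$ is a kink, i.e.\ exactly an $\Omega_1$ move, provided the triangle at $A$ is small enough that its projection meets nothing else of the diagram. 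The forbidden moves concern the univalent endpoints (leg/head), not corners of the polygonal arc. This $\Omega_1$ observation is precisely how the paper disposes of the corner (its move $\Delta_1=AB'C'$ is nice because $\Delta_{1pr}$ is an $\Omega_1$ move), so your assertion that such a move ``cannot be written as Reidemeister, slide and planar isotopy moves'' and that ``the only way around it is the alignment device'' is wrong.

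Moreover, the alignment device itself introduces a defect you never address: by construction the triangulation edge $AA'$ projects onto a subsegment of $M_{pr}A_{pr}$, so every intermediate arc that contains $AA'$ (and some must, since $AA'$ is an interior edge of $\mathcal{T}$ lying on $\gamma_j$ between the two flanking peels) has a projection with two overlapping edges -- not a rail knotoid diagram at all. This makes the ``nice'' claim ill-formed for any peel performed while $AA'$ sits on the moving arc, and it produces good moves anchored at arcs with non-generic projections, which clashes with the standing restriction to generic projections and blocks the intended downstream use of the lemma (feeding the good moves into Lemma~\ref{lemma_good_moves} to get Corollary~\ref{corollary_nice_moves}). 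One could patch this by peeling the two triangles flanking $AA'$ consecutively and arguing around the degenerate stage, but you neither notice nor do this; the cleaner repair is simply to drop the alignment and accept the one ``bad'' small corner triangle, which, taken small enough, is nice because its projection is an $\Omega_1$ move. As a side remark, for the peels away from $A$ and $B$ you do not need the fineness/genericity analysis at all for this lemma: their entering and exiting edges lie in the plane $P$ of $\Delta$, so (projection being injective on $P$ in the non-degenerate case) they automatically satisfy the entering-exiting condition and are good.
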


\begin{proof}

	Let $\Delta=ABC$ be a move which, say for definiteness, replaces edge $AB$ of $c_1$ by edges $AC,CB$ of  $c_2$. If  on $\pi$ it happens for example that the entering edge, say $MA_{pr}$, intersects the interior of $ABC_{pr}$, then by  small triangle moves we put a new vertex on $c_1$ at a point  $B'$ nearby $A$  on edge $AB$. We take care that the triangle of each one of these moves is a tiny triangle part of $ABC$ which projects its vertices away from edges of $c_{pr}$ other than $AB$ and whose projection on $\pi$ contains no vertices of $c_{pr}$ other than $A,B$.
 	Let $\delta$ be the composition of these moves. Let $C'$ be a point close to $A$ on the segment $AC$, and let $\Delta_1$ be the triangle move replacing $AB'$ by $AC',C'B'$. Let $\Delta_2$ be the triangle move replacing $C'B',B'B$ by $C'B$. Let $\Delta_3$ be the triangle move replacing $C'B$ by $C'C,CB$.  Then $\Delta=   \Delta_3 \circ \Delta_2 \circ \Delta_1 \circ \delta$ (no question here about if these moves compose).
	
	Note now that each submove of $\delta$ projects on $\pi$ to a planar isotopy of type $2$ or $3$, thus $\delta$ is a nice move. Also, choosing $B',C'$ appropriately close to $A$,  the projection $\Delta_{1pr}$ is an $\Omega_1$ move, thus $\Delta_1$ is a nice move. Finally $\Delta_{2pr},\Delta_{3pr}$ are good moves with respect to their entering edge. Since the triangle of $\delta$ and all of $\Delta_i$'s is a subset of $ABC$ we would have finished, if only $\Delta_{2},\Delta_{3}$ were good with respect to their exiting edge as well.   
	
	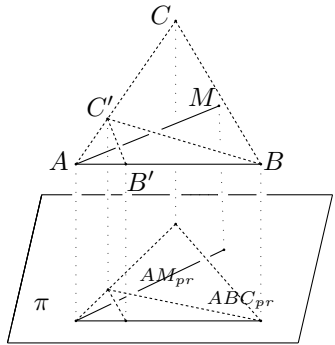
\begin{figure}[!h]
		\centering
		\psset{xunit=1.0cm,yunit=1.0cm,algebraic=true,dimen=middle,dotstyle=o,dotsize=5pt 0,linewidth=1.6pt,arrowsize=3pt 2,arrowinset=0.25}
		\begin{pspicture*}(0.9039745464411381,0.18595033906040837)(5.810601579135646,4.9)
		\psaxes[labelFontSize=\scriptstyle,xAxis=true,yAxis=true,Dx=1.,Dy=1.,ticksize=-2pt 0,subticks=2]{->}(0,0)(0.9039745464411381,0.18595033906040837)(5.810601579135646,4.695636757874054)
		\psline[linewidth=0.4pt]{->}(-2.160245098369651,4.880482899318799)(-0.5612816733105834,4.880482899318799)
		\psline[linewidth=0.4pt](-2.160245098369651,0.18595033906040837)(-2.160245098369651,4.695636757874054)
		\psline[linewidth=0.4pt]{->}(-2.160245098369651,4.880482899318799)(-2.160245098369651,3.481366940228388)
		\psline[linewidth=0.4pt]{->}(-2.160245098369651,4.880482899318799)(-0.5331218225677232,3.5228195290636695)
		\psline[linewidth=0.4pt](2.159921269396657,2.687579042046982)(4.618002623124626,2.687579042046982)
		\psline[linewidth=0.4pt,linestyle=dashed,dash=1pt 1pt](3.485736802706625,4.594043714032869)(4.618002623124626,2.687579042046982)
		\psline[linewidth=0.4pt,linestyle=dashed,dash=1pt 1pt](2.159921269396657,2.687579042046982)(3.485736802706625,4.594043714032869)
		\psline[linewidth=0.4pt](1.249459091049676,0.3100283475438989)(1.7050794441005366,2.2811246449738487)
		\psline[linewidth=0.4pt](1.249459091049676,0.3100283475438989)(5.115172351394384,0.3100283475438989)
		\psline[linewidth=0.4pt](5.115172351394384,0.3100283475438989)(5.570792704445244,2.2811246449738487)
		\psline[linewidth=0.4pt](1.3172267828429507,0.6032036753901218)(1.7050794441005366,2.2811246449738487)
		\psline[linewidth=0.4pt,linestyle=dashed,dash=1pt 1pt](2.159921269396657,0.6032036753901218)(3.485736802706625,1.8923926813588636)
		\psline[linewidth=0.4pt,linestyle=dashed,dash=1pt 1pt](3.485736802706625,1.8923926813588636)(4.618002623124626,0.6032036753901218)
		\psline[linewidth=0.4pt,linestyle=dashed,dash=1pt 1pt](2.5806621879779996,3.2925860563573863)(2.823051589627747,2.687579042046982)
		\psline[linewidth=0.4pt,linestyle=dashed,dash=1pt 1pt](2.5806621879779996,3.2925860563573863)(4.618002623124626,2.687579042046982)
		\psline[linewidth=0.4pt,linestyle=dashed,dash=1pt 1pt](2.5874038478937966,1.01887676609967)(2.823051589627747,0.6032036753901218)
		\psline[linewidth=0.4pt,linestyle=dashed,dash=1pt 1pt](2.5874038478937966,1.01887676609967)(4.618002623124626,0.6032036753901218)
		\psline[linewidth=0.4pt](2.159921269396657,2.687579042046982)(2.6876535180912806,2.9028633167852242)
		\psline[linewidth=0.4pt](3.29801180114528,3.151854240919169)(4.060793259552503,3.4630250163704948)
		\psline[linewidth=0.4pt](2.159921269396657,0.6032036753901218)(4.618002623124626,0.6032036753901218)
		\psline[linewidth=0.4pt](1.249459091049676,0.3100283475438989)(1.7050794441005366,2.2811246449738487)
		\psline[linewidth=0.4pt,linestyle=dotted](2.159921269396657,2.687579042046982)(2.159921269396657,0.6032036753901218)
		\psline[linewidth=0.4pt,linestyle=dotted](2.823051589627747,2.687579042046982)(2.823051589627747,0.6032036753901218)
		\psline[linewidth=0.4pt,linestyle=dotted](4.618002623124626,2.687579042046982)(4.618002623124626,0.6032036753901218)
		\psline[linewidth=0.4pt,linestyle=dotted](2.5806621879779996,3.2925860563573863)(2.582160812673773,2.787156097571605)
		\psline[linewidth=0.4pt,linestyle=dotted](2.582743287256272,2.5907092452965226)(2.5874038478937966,1.01887676609967)
		\psline[linewidth=0.4pt,linestyle=dotted](3.485736802706625,2.5908041856009985)(3.485736802706625,1.8923926813588636)
		\psline[linewidth=0.4pt,linestyle=dotted](3.485736802706625,3.384026355602877)(3.485736802706625,4.594043714032869)
		\psline[linewidth=0.4pt,linestyle=dotted](4.060793259552503,3.4630250163704948)(4.076978989599283,3.0084332283486037)
		\psline[linewidth=0.4pt,linestyle=dotted](4.092280926437204,2.578663622205184)(4.1289018780680955,1.5501289912163103)
		\psline[linewidth=0.4pt,linestyle=dotted](3.5631566878634113,2.2811246449738487)(4.047030970093326,2.2811246449738487)
		\psline[linewidth=0.4pt](1.7050794441005366,2.2811246449738487)(2.109774298329194,2.2811246449738487)
		\psline[linewidth=0.4pt](2.2350582033944772,2.2811246449738487)(2.5477008930182636,2.2811246449738487)
		\psline[linewidth=0.4pt](2.624440580337376,2.2811246449738487)(2.777955125344037,2.2811246449738487)
		\psline[linewidth=0.4pt](2.9050876640307264,2.2811246449738487)(3.4083169175498385,2.2811246449738487)
		\psline[linewidth=0.4pt](3.5631566878634113,2.2811246449738487)(4.047030970093326,2.2811246449738487)
		\psline[linewidth=0.4pt](4.559937709257037,2.2811246449738487)(4.182515769117702,2.2811246449738487)
		\psline[linewidth=0.4pt](4.695422508281413,2.2811246449738487)(5.570792704445244,2.2811246449738487)
		\psline[linewidth=0.4pt](2.159921269396657,0.6032036753901218)(2.6283296356558017,0.8284713789937908)
		\psline[linewidth=0.4pt](2.7325658926052965,0.8786008470534281)(2.8305364468373657,0.9257170033016431)
		\psline[linewidth=0.4pt](2.982773925982307,0.9989312962129258)(3.8093409258521698,1.3964452249719281)
		\psline[linewidth=0.4pt](3.9448346510437498,1.4616070848472649)(4.1289018780680955,1.5501289912163103)
		\psline[linewidth=0.4pt](2.7801685196387553,2.94060409358979)(3.1310919557550805,3.083760585579484)
		\begin{scriptsize}
		\psdots[dotsize=1pt 0,dotstyle=*](-2.160245098369651,4.880482899318799)
		\psdots[dotsize=1pt 0,dotstyle=*](-0.5612816733105834,4.880482899318799)
		\psdots[dotsize=1pt 0,dotstyle=*](-2.160245098369651,3.481366940228388)
		\psdots[dotsize=1pt 0,dotstyle=*](-0.5331218225677231,3.5228195290636695)
		\psdots[dotsize=1pt 0,dotstyle=*](2.159921269396657,2.687579042046982)
		\psdots[dotsize=1pt 0,dotstyle=*](3.485736802706625,4.594043714032869)
		\psdots[dotsize=1pt 0,dotstyle=*](4.618002623124626,2.687579042046982)
		\psdots[dotsize=1pt 0,dotstyle=*](2.159921269396657,0.6032036753901218)
		\psdots[dotsize=1pt 0,dotstyle=*](4.618002623124626,0.6032036753901218)
		\psdots[dotsize=1pt 0,dotstyle=*](3.485736802706625,1.8923926813588636)
		\psdots[dotsize=1pt 0,dotstyle=*](2.5806621879779996,3.2925860563573863)
		\psdots[dotsize=1pt 0,dotstyle=*](2.823051589627747,2.687579042046982)
		\psdots[dotsize=1pt 0,dotstyle=*](2.823051589627747,0.6032036753901218)
		\psdots[dotsize=1pt 0,dotstyle=*](2.5874038478937966,1.01887676609967)
		\psdots[dotsize=1pt 0,dotstyle=*](4.060793259552503,3.4630250163704948)
		\psdots[dotsize=1pt 0,dotstyle=*](4.1289018780680955,1.5501289912163103)
		\end{scriptsize}
		\small
		\rput[tl](1.8,2.85){$A$}
		\rput[tl](4.65,2.85){$B$}
		\rput[tl](3.15,4.8){$C$}
		\rput[tl](2.83,2.6){$B'$}
		\rput[tl](2.3,3.6){$C'$}
		\rput[tl](3.65,3.7){$M$}
		\rput[tl](1.6,0.9){$\pi$}
		\tiny
		\rput[tl](3,1.3){$AM_{pr}$}
		\rput[tl](3.9,1){$ABC_{pr}$}
		\normalsize
		\end{pspicture*}
		\caption{The only original vertices of $c_1$ are $M,A,B$. The small triangles (subtriangles of $ABC$) of the moves inserting vertex $B'$ to $c_1$ are not shown.}
		\label{figure_entering_edge_condition}
	\end{figure}

	If still any one of $\Delta_2,\Delta_3$ is not a good move, then working similarly for it, but this time for the exiting edge, we write it as a composition of submoves which are nice or satisfy the exiting edge condition. Since for the latter moves the entering edge condition is automatically satisfied, they are good, and by Lemma \ref{lemma_triangle_moves}  we are done.
	
	If $\Delta$ is a space slide move, observe that one of the entering or exiting conditions is  automatically satisfied as there exists no such edge for the move. Also, let $AB$ be the edge that become $AC$ performing the move, and $BC$ be part of a rail. The arguments developed above work for the vertex $A$ with the slight modification that $C'B$ is not replaced by the two edges $C'C, CB$, but rather just by the sigle edge $C'C$.
	\end{proof}

\begin{lemma}\label{lemma_good_moves}
	Any good triangle move $\Delta$ between two rail arcs $c_1,c_2$ is a finite composition  of nice moves $\Delta_i$ with triangles satisfying $\Delta_i\subseteq \Delta, \ \forall i$.
\end{lemma}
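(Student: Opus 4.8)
The plan is to imitate the classical proof that an ambient triangle move between knots projects to a finite sequence of Reidemeister moves, keeping careful track of the rails. Write $\Delta = ABC$ for the given good move taking $c_1$ to $c_2$, so that $AB$ is exchanged with $AC\cup CB$. We may assume, as we do throughout, that the projection is generic: $c_{1pr},c_{2pr}$ are honest rail knotoid diagrams, the triangle $ABC_{pr}$ is nondegenerate, and $AB_{pr},AC_{pr},CB_{pr}$ meet $c_{pr}$ and the rails $\ell_1,\ell_2$ transversally, away from crossings, from vertices, and from one another's intersection points (this reduction is the usual p.l.\ general-position argument). Because $\Delta$ is good, the entering and exiting edges $MA_{pr},BN_{pr}$ miss the interior of $ABC_{pr}$ altogether, so the only features of the diagram present inside $ABC_{pr}$ are: finitely many crossings of $c_{pr}$; finitely many arcs of $c_{pr}$ crossing transversally through it, none of them adjacent to $A$ or $B$; finitely many arcs of $\ell_1$ or $\ell_2$ crossing through it; and finitely many points where such an arc of $c_{pr}$ meets a rail. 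There are no rail--rail intersections since $\ell_1,\ell_2$ are parallel.

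Next I would choose a triangulation $\mathcal{T}$ of the triangle $ABC$, with $A,B,C$ among its vertices and $AB$ as one of its edges, fine enough that the projection of every $2$-simplex of $\mathcal{T}$ contains in its closure at most one of the features listed above, and so that no edge of a $2$-simplex of $\mathcal{T}$ has its projection passing through a crossing or a vertex of $c_{pr}$ or through a point of $\ell_i\cap c_{pr}$, or running along $\ell_1$ or $\ell_2$. Such a $\mathcal{T}$ exists by the standard device of first slicing $ABC$ by planes lying over a finite family of ``vertical'' lines chosen to separate all the features and then refining. Ordering the $2$-simplices of $\mathcal{T}$ as $T_1,\dots,T_n$ so that for each $j$ the triangle $T_j$ meets the boundary of $ABC\setminus(T_1\cup\cdots\cup T_{j-1})$ in one or two edges lying on the current polygonal path from $A$ to $B$, the corresponding elementary moves $\Delta_1,\dots,\Delta_n$ are well defined, each has its triangle contained in $ABC$, and $\Delta=\Delta_n\circ\cdots\circ\Delta_1$. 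By the first bullet of Lemma~\ref{lemma_triangle_moves}, $c_1\overset{\Delta_n\circ\cdots\circ\Delta_1}{\sim}c_2$, so it remains only to verify that each $\Delta_j$ is nice.

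For this I would run through the few possibilities for $T_{j,pr}$ according to the single feature it may enclose. If $T_{j,pr}$ encloses no feature, $\Delta_j$ sweeps the moving edge across a disk free of arcs and rails and $\Delta_{j,pr}$ is a planar isotopy, or an $\Omega_1$ move if a small kink is created or removed. If $T_{j,pr}$ meets a single transverse arc of $c_{pr}$, then $\Delta_{j,pr}$ is an $\Omega_2$ move, possibly together with a planar isotopy; if it meets a single crossing of $c_{pr}$, it is an $\Omega_3$; if it meets a single transverse arc of a rail, it is a rail $\Omega_2$ (or rail $\Omega_1$ if a kink straddling the rail is created or removed); and if it meets a point where an arc of $c_{pr}$ crosses a rail, it is a rail $\Omega_3$. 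Finally, if $A$ or $B$ is itself the leg or head of the knotoid, so that $ABC$ touches a rail only at that vertex, the $2$-simplices incident to it contribute swing/planar-isotopy moves of the endpoint within its region together with slide and rail moves of the types already listed. In every case $\Delta_{j,pr}$ is a finite composition of Reidemeister, rail, slide and planar isotopy moves between the relevant intermediate diagrams, i.e.\ $\Delta_j$ is nice; this finishes the argument.

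The main obstacle is the bookkeeping of the middle step: producing one triangulation $\mathcal{T}$ whose $2$-simplices simultaneously (i) enclose at most one feature each and (ii) have all their edges projecting clear of crossings, vertices, rail--arc intersections and of the rails themselves, while (iii) remaining an honest triangulation, so that the induced elementary moves compose to $\Delta$ with every triangle inside $ABC$. Points (i)--(ii) are exactly the finite combinatorial general-position input of the Reidemeister theorem, now enlarged by the two rail lines; point (iii) is automatic because each $2$-simplex of $\mathcal{T}$ lies in $ABC$, which by hypothesis meets the rest of $c$ and the rails only along the edges being exchanged. The only other technical point, to be dispatched by the usual p.l.\ perturbation, is the reduction to a generic configuration at the outset; the goodness hypothesis is precisely what rules out the adjacent edges $MA,BN$ producing a non-generic or non-Reidemeister local picture near $A$ and $B$.
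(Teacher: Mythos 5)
Your Case-(I)-style argument — triangulate the triangle finely so that each small simplex projects over at most one feature (a crossing, a transverse strand, a rail strand, a rail--arc intersection, or nothing), order the simplices so the elementary moves compose to $\Delta$ by Lemma \ref{lemma_triangle_moves}, and check that each projected elementary move is one of the generating moves — is essentially the paper's proof of the case in which $\Delta$ exchanges $AB$ with $AC\cup CB$, and that part is fine.

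However, there is a genuine gap: you never treat the case in which the good move $\Delta$ is a \emph{space slide move}, i.e.\ the triangle $ABC$ has a whole side $AC$ lying on a rail and the move slides the endpoint of the rail arc along that rail (replacing $AB$ by $CB$). These moves cannot be avoided: Lemma \ref{lemma_nice_and_good_moves} decomposes an arbitrary triangle move (including space slide moves) into nice and good pieces, and a space slide piece can itself only be good, so the present lemma must cover it in order for the subsequent Lemma and Corollary \ref{corollary_nice_moves} to follow. Your setup explicitly excludes this configuration: you assume $AB$ is exchanged with $AC\cup CB$, you put the triangle's sides transverse to $\ell_1,\ell_2$, and you require that no edge of the triangulation $\mathcal{T}$ project onto a rail — all of which fail when $AC\subset\ell_i$; moreover the projected move in this case is the \emph{slide move} generator (moving a trivalent endpoint along the rail), which your case analysis never produces. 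The paper handles this separately: it cuts $AC$ at its intersection points with the projected arc, decomposes $\Delta$ into triangles $BAY_1, BY_1Y_2,\dots$ by Lemma \ref{lemma_triangle_moves}, and inside each such triangle isolates a thin triangle next to the rail whose move projects exactly to a slide move, the two remaining pieces being moves of the type your argument (and the paper's Case (I)) already covers. You would need to add an argument of this kind (or otherwise show how a rail-side triangle can be swept while producing only generating moves) before the proof is complete.
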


\begin{proof}
	Let us call $c_{pr}$  the projection of what is left in common after removing the non-common points of $c_1,c_2$  on $\Delta$. We consider two cases:
	
	Case (I). The move $\Delta=ABC$ exchanges  the edge $AB$ with the edges $AC,CB$ between  $c_1$ and $c_2$ (it replaces  $AB$ by $AC,CB$ or vice versa). Then no side of the triangle $\Delta$ can be part of a rail, and the move $\Delta$ is not a slide move.
	
	Since $\Delta$ is good, the entering and exiting edges (if any) on $\pi$,  say $MA_{pr}$ and $BN_{pr}$, do not intersect the interior of $\Delta_{pr}$. But the rails, as well as the  projection $c_{pr}$ might do. As always our projections keep track of over/under data.
	
	Since no rail and no arc of $c_1,c_2$ pierces triangle $\Delta$, the parts of $S=\ell_1 \cup \ell_2 \cup c_{pr}$ that intersect $\Delta_{pr}$ are equipped with data rendering them either entirely over or entirely under $\Delta_{pr}$. The  vertices and crossing points of $S $ in $\Delta_{pr}$ are finite and have to appear in the interior of $\Delta_{pr}$. Thus in the interior of $\Delta_{pr}$ we can consider small enough, disjoint triangles  around each one of these vertices and crossings, and we can extend these triangles to a finite triangulation of the whole triangle $\Delta_{pr}$, taking care of putting no vertex or side of the triangulation on $S$. Then each triangle $\delta_i$ of the triangulation contains a part of $S$  falling to one of four types: (i) $\delta_i$ contains a single crossing point of $S$ with branches through it that intersect two sides of $\delta_i$ at interior points, (ii) $\delta_i$ contains a single vertex of $S$ and parts of the two edges with endpoint this vertex, that intersect one or two sides of $\delta$ at interior points, (iii) $\delta_i$ contains only a part of an edge that intersect two sides of $\delta$ at interior points, (iv) $\delta_i$ contains no no point of $S$. Constructing the triangulation, it is convenient to consider triangles of types (i), (ii) that look as in Figure \ref{figure_cases_of_projected_moves}.

	\begin{figure}[!h]
		\centering
		\newrgbcolor{eqeqeq}{0.8784313725490196 0.8784313725490196 0.8784313725490196}
		\psset{xunit=1.0cm,yunit=1.0cm,algebraic=true,dimen=middle,dotstyle=o,dotsize=5pt 0,linewidth=1.6pt,arrowsize=3pt 2,arrowinset=0.25}
		\begin{pspicture*}(-0.6888621129048845,0.25524452407850734)(11.892586521983258,4.66537336136039)
		\pspolygon[linewidth=0.4pt,linestyle=dashed,dash=1pt 1pt,linecolor=eqeqeq,fillcolor=eqeqeq,fillstyle=solid,opacity=0.1](2.5934914589609326,4.3139750959131975)(1.734227147313643,2.6498302898114767)(3.920456598466873,2.6498302898114767)
		\pspolygon[linewidth=0.pt,linecolor=eqeqeq,fillcolor=eqeqeq,fillstyle=solid,opacity=0.1](5.43232671642856,4.3139750959131975)(4.57306240478127,2.6498302898114767)(6.7592918559345,2.6498302898114767)
		\pspolygon[linewidth=0.pt,linecolor=eqeqeq,fillcolor=eqeqeq,fillstyle=solid,opacity=0.1](8.271161973896188,4.3139750959131975)(7.411897662248897,2.6498302898114767)(9.598127113402128,2.6498302898114767)
		\pspolygon[linewidth=0.pt,linecolor=eqeqeq,fillcolor=eqeqeq,fillstyle=solid,opacity=0.1](1.2556495560164187,2.247390042584267)(0.39638524436912914,0.5832452364825462)(2.5826146955223592,0.5832452364825462)
		\pspolygon[linewidth=0.pt,linecolor=eqeqeq,fillcolor=eqeqeq,fillstyle=solid,opacity=0.1](4.094484813484046,2.247390042584267)(3.235220501836756,0.5832452364825462)(5.421449952989986,0.5832452364825462)
		\pspolygon[linewidth=0.pt,linecolor=eqeqeq,fillcolor=eqeqeq,fillstyle=solid,opacity=0.1](7.36990596801248,2.236534911682713)(6.51064165636519,0.5723901055809923)(8.69687110751842,0.5723901055809923)
		\pspolygon[linewidth=0.pt,linecolor=eqeqeq,fillcolor=eqeqeq,fillstyle=solid,opacity=0.1](10.208741225480107,2.236534911682713)(9.349476913832817,0.5723901055809923)(11.535706364986048,0.5723901055809923)
		\psline[linewidth=0.4pt]{->}(-0.6695375726110528,6.804753923346491)(2.1692976848565744,6.804753923346491)
		\psline[linewidth=0.4pt]{->}(-0.6695375726110528,6.804753923346491)(-0.6695375726110528,5.40563796425608)
		\psline[linewidth=0.4pt]{->}(-0.6695375726110528,6.804753923346491)(-2.0073794755555667,4.73816887001756)
		\psline[linewidth=0.4pt](2.200622763559666,2.317110096225533)(3.3324587669776013,3.9095770312670566)
		\psline[linewidth=0.4pt,linestyle=dashed,dash=1pt 1pt](4.57306240478127,2.6498302898114767)(5.43232671642856,4.3139750959131975)
		\psline[linewidth=0.4pt](5.0394580210272935,2.317110096225533)(6.171294024445229,3.9095770312670566)
		\psline[linewidth=0.4pt,linestyle=dashed,dash=1pt 1pt](7.411897662248897,2.6498302898114767)(8.271161973896188,4.3139750959131975)
		\psline[linewidth=0.4pt,linestyle=dashed,dash=1pt 1pt](8.271161973896188,4.3139750959131975)(9.598127113402128,2.6498302898114767)
		\psline[linewidth=0.4pt,linestyle=dashed,dash=1pt 1pt](9.598127113402128,2.6498302898114767)(7.411897662248897,2.6498302898114767)
		\psline[linewidth=0.4pt,linestyle=dashed,dash=1pt 1pt](2.232455546836078,3.6147536458486025)(2.5934914589609326,4.3139750959131975)
		\psline[linewidth=0.4pt,linestyle=dashed,dash=1pt 1pt](2.5934914589609326,4.3139750959131975)(3.0966175358022983,3.683005507743285)
		\psline[linewidth=0.4pt](1.9637268558675396,3.777968193660319)(2.6457859342417906,3.132124641571423)
		\psline[linewidth=0.4pt](2.7812520661310156,3.003851401640917)(3.4509067208236646,2.369753631268228)
		\psline[linewidth=0.4pt](4.802562113335167,3.777968193660319)(4.972209177201309,3.617328938495033)
		\psline[linewidth=0.4pt](5.641863831893957,2.983231168122345)(5.930207047642776,2.7101982116168246)
		\psline[linewidth=0.4pt](6.081715933957452,2.566734044929476)(6.289741978291292,2.369753631268228)
		\psline[linewidth=0.4pt,linestyle=dashed,dash=1pt 1pt](5.43232671642856,4.3139750959131975)(5.937531969839292,3.680398015816128)
		\psline[linewidth=0.4pt,linestyle=dashed,dash=1pt 1pt](6.034976045056455,3.558193560830669)(6.7592918559345,2.6498302898114767)
		\psline[linewidth=0.4pt](7.641397370802794,3.777968193660319)(7.811044434668936,3.617328938495033)
		\psline[linewidth=0.4pt](8.480699089361584,2.983231168122345)(8.769042305110403,2.7101982116168246)
		\psline[linewidth=0.4pt](8.92055119142508,2.566734044929476)(9.128577235758918,2.369753631268228)
		\psline[linewidth=0.4pt](7.878293278494921,2.317110096225533)(8.05724291185045,2.568888068737383)
		\psline[linewidth=0.4pt](8.192835190700716,2.7596632517708986)(8.76365814037567,3.5627978670112452)
		\psline[linewidth=0.4pt](8.8479639250631,3.6814141454668152)(9.010129281912857,3.9095770312670566)
		\psline[linewidth=0.4pt,linestyle=dashed,dash=1pt 1pt](0.39638524436912914,0.5832452364825462)(2.5826146955223592,0.5832452364825462)
		\psline[linewidth=0.4pt](1.3566863003495,1.522514930265145)(0.19207717365909724,0.9466093181654913)
		\psline[linewidth=0.4pt](1.3566863003495,1.522514930265145)(2.623678646968729,1.0489925380943186)
		\psline[linewidth=0.4pt,linestyle=dashed,dash=1pt 1pt](4.094484813484046,2.247390042584267)(3.235220501836756,0.5832452364825462)
		\psline[linewidth=0.4pt,linestyle=dashed,dash=1pt 1pt](3.235220501836756,0.5832452364825462)(5.421449952989986,0.5832452364825462)
		\psline[linewidth=0.4pt,linestyle=dashed,dash=1pt 1pt](5.421449952989986,0.5832452364825462)(4.094484813484046,2.247390042584267)
		\psline[linewidth=0.4pt]{->}(-0.6695375726110528,6.804753923346491)(2.974178179310998,6.804753923346491)
		\psline[linewidth=0.4pt,linestyle=dashed,dash=1pt 1pt](1.2556495560164187,2.247390042584267)(0.7901900114074163,1.345930418214929)
		\psline[linewidth=0.4pt,linestyle=dashed,dash=1pt 1pt](0.6454399071948038,1.0655916087905009)(0.39638524436912914,0.5832452364825462)
		\psline[linewidth=0.4pt,linestyle=dashed,dash=1pt 1pt](1.2556495560164187,2.247390042584267)(1.923862252464672,1.409385595399158)
		\psline[linewidth=0.4pt,linestyle=dashed,dash=1pt 1pt](2.1251920763906753,1.1568981932624478)(2.5826146955223592,0.5832452364825462)
		\psline[linewidth=0.4pt](3.0309124311267244,0.9466093181654913)(3.415965752933083,1.1370203014763294)
		\psline[linewidth=0.4pt](3.6485132405237843,1.2520163118233802)(4.195521557817127,1.522514930265145)
		\psline[linewidth=0.4pt](4.195521557817127,1.522514930265145)(4.726454301581019,1.3240855209796485)
		\psline[linewidth=0.4pt](4.973387366808421,1.2317974056926393)(5.462513904436356,1.0489925380943186)
		\psline[linewidth=0.4pt,linestyle=dashed,dash=1pt 1pt,linecolor=eqeqeq](2.5934914589609326,4.3139750959131975)(1.734227147313643,2.6498302898114767)
		\psline[linewidth=0.4pt,linestyle=dashed,dash=1pt 1pt,linecolor=eqeqeq](1.734227147313643,2.6498302898114767)(3.920456598466873,2.6498302898114767)
		\psline[linewidth=0.4pt,linestyle=dashed,dash=1pt 1pt,linecolor=eqeqeq](3.920456598466873,2.6498302898114767)(2.5934914589609326,4.3139750959131975)
		\psline[linewidth=0.4pt]{->}(-0.6695375726110528,6.804753923346491)(5.444718839385009,6.793898792444937)
		\psline[linewidth=0.4pt,linestyle=dashed,dash=1pt 1pt](6.51064165636519,0.5723901055809923)(8.69687110751842,0.5723901055809923)
		\psline[linewidth=0.4pt](6.441116455374437,1.134049421376562)(8.322796530247622,1.5908734857966422)
		\psline[linewidth=0.4pt,linestyle=dashed,dash=1pt 1pt](9.349476913832817,0.5723901055809923)(11.535706364986048,0.5723901055809923)
		\psline[linewidth=0.4pt,linestyle=dashed,dash=1pt 1pt](10.208741225480107,2.236534911682713)(9.349476913832817,0.5723901055809923)
		\psline[linewidth=0.4pt,linestyle=dashed,dash=1pt 1pt](10.208741225480107,2.236534911682713)(11.535706364986048,0.5723901055809923)
		\psline[linewidth=0.4pt,linestyle=dashed,dash=1pt 1pt](7.36990596801248,2.236534911682713)(6.890282886806318,1.3076446404859654)
		\psline[linewidth=0.4pt,linestyle=dashed,dash=1pt 1pt](6.816760001185773,1.1652522164360473)(6.51064165636519,0.5723901055809923)
		\psline[linewidth=0.4pt,linestyle=dashed,dash=1pt 1pt](7.36990596801248,2.236534911682713)(7.905779075821217,1.5644973256602774)
		\psline[linewidth=0.4pt,linestyle=dashed,dash=1pt 1pt](7.9979212507865185,1.4489419750890375)(8.69687110751842,0.5723901055809923)
		\psline[linewidth=0.4pt](9.279951712842063,1.134049421376562)(9.595276954303946,1.2106023701707764)
		\psline[linewidth=0.4pt](9.79542031505861,1.259192087810637)(10.73142152606343,1.4864293760314606)
		\psline[linewidth=0.4pt](10.874627039602512,1.521196032497596)(11.16163178771525,1.5908734857966422)
		\rput[tl](1.3109260385352308,3.8177810322731816){$(i)$}
		\rput[tl](-0.3312840990712215,1.6988002095551598){$(ii)$}
		\rput[tl](6.237556451354587,1.8842110315429865){$(iii)$}
		\psline[linewidth=0.4pt,linestyle=dashed,dash=1pt 1pt](1.734227147313643,2.6498302898114767)(2.171312442957512,3.4963372547926443)
		\psline[linewidth=0.4pt,linestyle=dashed,dash=1pt 1pt](3.920456598466873,2.6498302898114767)(3.1832554373296578,3.5743530574671696)
		\psline[linewidth=0.4pt,linestyle=dashed,dash=1pt 1pt](3.920456598466873,2.6498302898114767)(3.2403325806528858,2.6498302898114767)
		\psline[linewidth=0.4pt,linestyle=dashed,dash=1pt 1pt](3.0692410917641277,2.6498302898114767)(2.542805741337181,2.6498302898114767)
		\psline[linewidth=0.4pt,linestyle=dashed,dash=1pt 1pt](2.3453924849270757,2.6498302898114767)(1.734227147313643,2.6498302898114767)
		\psline[linewidth=0.4pt,linestyle=dashed,dash=1pt 1pt](4.57306240478127,2.6498302898114767)(5.161821609711241,2.6498302898114767)
		\psline[linewidth=0.4pt,linestyle=dashed,dash=1pt 1pt](5.411878401164041,2.6498302898114767)(6.7592918559345,2.6498302898114767)
		\psline[linewidth=0.4pt](5.08833177711179,3.50737214034971)(5.460771527750112,3.154707951692181)
		\psline[linewidth=0.4pt](7.927167034579417,3.50737214034971)(8.29960678521774,3.154707951692181)
		\begin{scriptsize}
		\psdots[dotsize=2pt 0,dotstyle=*](-0.6695375726110528,6.804753923346491)
		\psdots[dotsize=2pt 0,dotstyle=*](2.1692976848565744,6.804753923346491)
		\psdots[dotsize=2pt 0,dotstyle=*](-0.6695375726110528,5.40563796425608)
		\psdots[dotsize=2pt 0,dotstyle=*](-2.0073794755555667,4.73816887001756)
		\psdots[dotsize=2pt 0](1.734227147313643,2.6498302898114767)
		\psdots[dotsize=2pt 0](2.5934914589609326,4.3139750959131975)
		\psdots[dotsize=2pt 0](3.920456598466873,2.6498302898114767)
		\psdots[dotsize=2pt 0](4.57306240478127,2.6498302898114767)
		\psdots[dotsize=2pt 0](5.43232671642856,4.3139750959131975)
		\psdots[dotsize=2pt 0](6.7592918559345,2.6498302898114767)
		\psdots[dotsize=2pt 0](7.411897662248897,2.6498302898114767)
		\psdots[dotsize=2pt 0](8.271161973896188,4.3139750959131975)
		\psdots[dotsize=2pt 0](9.598127113402128,2.6498302898114767)
		\psdots[dotsize=2pt 0](1.2556495560164187,2.247390042584267)
		\psdots[dotsize=2pt 0](0.39638524436912914,0.5832452364825462)
		\psdots[dotsize=2pt 0](2.5826146955223592,0.5832452364825462)
		\psdots[dotsize=2pt 0,dotstyle=*](1.3566863003495,1.522514930265145)
		\psdots[dotsize=2pt 0](4.094484813484046,2.247390042584267)
		\psdots[dotsize=2pt 0](3.235220501836756,0.5832452364825462)
		\psdots[dotsize=2pt 0](5.421449952989986,0.5832452364825462)
		\psdots[dotsize=2pt 0,dotstyle=*](4.195521557817127,1.522514930265145)
		\psdots[dotsize=2pt 0,dotstyle=*](4.195521557817127,1.522514930265145)
		\psdots[dotsize=2pt 0,dotstyle=*](2.974178179310998,6.804753923346491)
		\psdots[dotsize=2pt 0,dotstyle=*](5.444718839385009,6.793898792444937)
		\psdots[dotsize=2pt 0](6.51064165636519,0.5723901055809923)
		\psdots[dotsize=2pt 0](7.36990596801248,2.236534911682713)
		\psdots[dotsize=2pt 0](8.69687110751842,0.5723901055809923)
		\psdots[dotsize=2pt 0](9.349476913832817,0.5723901055809923)
		\psdots[dotsize=2pt 0](10.208741225480107,2.236534911682713)
		\psdots[dotsize=2pt 0](11.535706364986048,0.5723901055809923)
		\end{scriptsize}
		\end{pspicture*}
		\caption{A triangle move $\Delta$ in space, decomposes to smaller moves that project on $\pi$ to triangles that contain either no parts of the projected arcs and the rails, or else they do so in the three ways shown here.}
		\label{figure_cases_of_projected_moves}
	\end{figure}
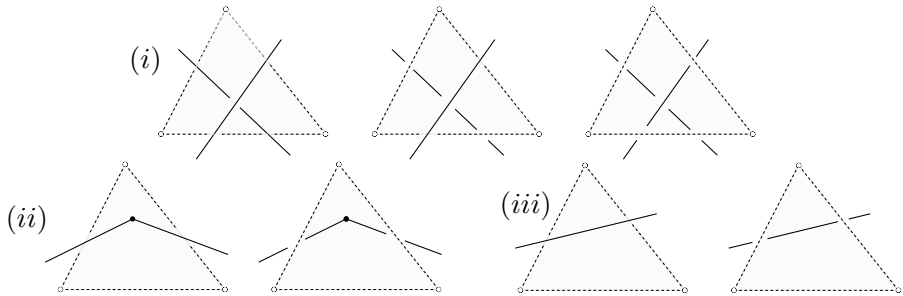
	
	Let us notice that the triangulation by the $\delta_i$'s of $\Delta_{pr}$, implies a triangulation of $\Delta$ by triangles $\Delta_i$'s that project onto the $\delta_i$'s on $\pi$. Let us also notice that by Lemma \ref{lemma_triangle_moves}, it is legitimate to perform consecutive moves in each one of the $\Delta_i$'s. And let for definiteness, the move $\Delta$ replaces $AB$ by $AC,CB$. Then since the triangles of the $\Delta_i$'s have $\Delta$ as their union, we can arrive at the same  result of replacing $AB$ by $AC,CB$   performing moves through the $\Delta_i$'s starting from triangles with sides on $AB$, and ending up with those with sides on $AC,CB$. 
	
	The $\Delta_i$ moves, which are moves between arcs in space, project on $\pi$ to corresponding $\delta_i$ moves between the projected arcs on the plane. But such a move for a triangle $\delta_i$ of type (i) is either an $\Omega_3$ move composed with a planar isotopy 1 move, or it decomposes to an $\Omega_2$  move,  an $\Omega_3$  move,  and some planar isotopies (Figure \ref{figure_decomposing_type_i_moves}). Similary,  for $\delta_i$ of type (ii) the corresponding move is either an $\Omega_2$ move or it decomposes to  some planar isotopies; for $\delta_i$ of type (iii) the corresponding move is an isotopy 1 move, an isotopy 3 move, or decomposes to an $\Omega_2$ and some isotopies; whereas for $\delta_i$ of type (iv) the corresponding move is a planar isotopy. Thus each $\Delta_i$ is a nice move and we are done.
	
	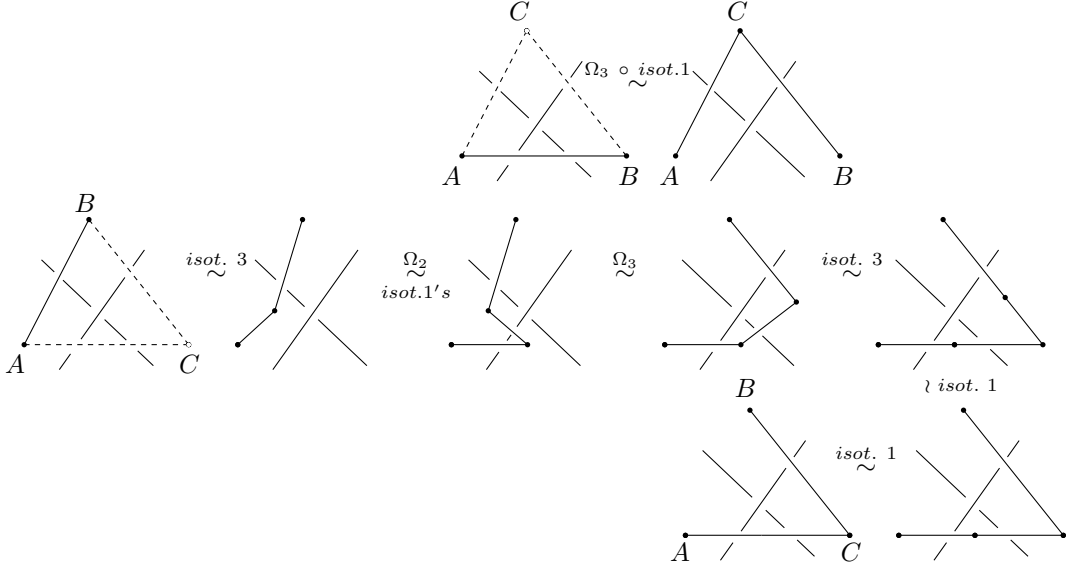
\begin{figure}[!h]
		\centering
		\psset{xunit=1.0cm,yunit=1.0cm,algebraic=true,dimen=middle,dotstyle=o,dotsize=5pt 0,linewidth=1.6pt,arrowsize=3pt 2,arrowinset=0.25}
		\begin{pspicture*}(-1.5034050059077912,-3.146435587457053)(12.885177344844362,4.780607466892581)
		\psline[linewidth=0.4pt]{->}(-0.6695375726110528,6.804753923346491)(2.1692976848565744,6.804753923346491)
		\psline[linewidth=0.4pt]{->}(-0.6695375726110528,6.804753923346491)(-0.6695375726110528,5.40563796425608)
		\psline[linewidth=0.4pt]{->}(-0.6695375726110528,6.804753923346491)(-6.4895721150954095,4.294551717384737)
		\psline[linewidth=0.4pt,linestyle=dashed,dash=2pt 2pt](4.57306240478127,2.6498302898114767)(5.43232671642856,4.3139750959131975)
		\psline[linewidth=0.4pt](8.271161973896188,4.3139750959131975)(9.598127113402128,2.6498302898114767)
		\psline[linewidth=0.4pt](4.802562113335167,3.777968193660319)(4.972209177201309,3.617328938495033)
		\psline[linewidth=0.4pt](5.641863831893957,2.983231168122345)(5.930207047642776,2.7101982116168246)
		\psline[linewidth=0.4pt](6.081715933957452,2.566734044929476)(6.289741978291292,2.369753631268228)
		\psline[linewidth=0.4pt](8.8479639250631,3.6814141454668152)(9.010129281912857,3.9095770312670566)
		\psline[linewidth=0.4pt]{->}(-0.6695375726110528,6.804753923346491)(2.974178179310998,6.804753923346491)
		\psline[linewidth=0.4pt]{->}(-0.6695375726110528,6.804753923346491)(5.444718839385009,6.793898792444937)
		\psline[linewidth=0.4pt](5.08833177711179,3.50737214034971)(5.460771527750112,3.154707951692181)
		\psline[linewidth=0.4pt](4.57306240478127,2.6498302898114767)(6.7592918559345,2.6498302898114767)
		\psline[linewidth=0.4pt,linestyle=dashed,dash=2pt 2pt](5.43232671642856,4.3139750959131975)(6.7592918559345,2.6498302898114767)
		\psline[linewidth=0.4pt](5.0394580210272935,2.317110096225533)(5.188508904121335,2.526821222439243)
		\psline[linewidth=0.4pt](5.327899658869849,2.7229407727249444)(5.91120112646038,3.543632372474418)
		\psline[linewidth=0.4pt](6.018186244120525,3.6941579449962503)(6.171294024445229,3.9095770312670566)
		\psline[linewidth=0.4pt](-1.0174724291491897,1.267765987698565)(-0.8478253652830476,1.107126732533279)
		\psline[linewidth=0.4pt](-0.17817071059039957,0.473028962160591)(0.11017250515841948,0.1999960056550707)
		\psline[linewidth=0.4pt](0.2616813914730951,0.05653183896772207)(0.4697074358069351,-0.14044857469352579)
		\psline[linewidth=0.4pt](-0.7317027653725665,0.9971699343879559)(-0.3592630147342444,0.6445057457304273)
		\psline[linewidth=0.4pt,linestyle=dashed,dash=2pt 2pt](-0.3877078260557969,1.8037728899514436)(0.9392573134501436,0.1396280838497228)
		\psline[linewidth=0.4pt](-0.7805765214570632,-0.19309210973622104)(-0.6315256383630219,0.01661901647748909)
		\psline[linewidth=0.4pt](-0.49213488361450786,0.21273856676319047)(0.0911665839760234,1.0334301665126642)
		\psline[linewidth=0.4pt](0.19815170163616802,1.1839557390344964)(0.35125948196087187,1.3993748253053027)
		\psline[linewidth=0.4pt](-0.3877078260557969,1.8037728899514436)(-1.2469721377030867,0.1396280838497228)
		\psline[linewidth=0.4pt,linestyle=dashed,dash=2pt 2pt](-1.2469721377030867,0.1396280838497228)(0.9392573134501436,0.1396280838497228)
		\psline[linewidth=0.4pt](7.411897662248897,2.6498302898114767)(8.271161973896188,4.3139750959131975)
		\psline[linewidth=0.4pt](8.76365814037567,3.5627978670112452)(7.878293278494921,2.317110096225533)
		\psline[linewidth=0.4pt](7.641397370802794,3.777968193660319)(7.836802784699834,3.592938288465776)
		\psline[linewidth=0.4pt](7.926836830046558,3.507684811898523)(8.33504884347171,3.1211477726375354)
		\psline[linewidth=0.4pt](8.484219869873215,2.9798973317086768)(9.128577235758918,2.369753631268228)
		\psline[linewidth=0.4pt](1.5918631197645405,0.1396280838497228)(2.0829085497142685,0.5891824837112023)
		\psline[linewidth=0.4pt](2.0829085497142685,0.5891824837112023)(2.4511274314118303,1.8037728899514436)
		\psline[linewidth=0.4pt](2.09945299005633,1.0044416752564882)(1.8213628283184375,1.267765987698565)
		\psline[linewidth=0.4pt](2.479572242733383,0.6445057457304273)(2.2478824869148286,0.8638933906205639)
		\psline[linewidth=0.4pt](3.190094739428499,1.3993748253053027)(2.058258736010564,-0.19309210973622104)
		\psline[linewidth=0.4pt](2.6606645468772276,0.473028962160591)(3.3085426932745623,-0.14044857469352579)
		\psline[linewidth=0.4pt](4.921743807181896,0.5891824837112023)(5.2899626888794575,1.8037728899514436)
		\psline[linewidth=0.4pt](4.938288247523957,1.0044416752564882)(4.660198085786065,1.267765987698565)
		\psline[linewidth=0.4pt](5.31840750020101,0.6445057457304273)(5.086717744382456,0.8638933906205639)
		\psline[linewidth=0.4pt](5.499499804344855,0.473028962160591)(6.1473779507421895,-0.14044857469352579)
		\psline[linewidth=0.4pt](4.921743807181896,0.5891824837112023)(5.442938642835327,0.1396280838497228)
		\psline[linewidth=0.4pt](4.430698377232168,0.1396280838497228)(5.442938642835327,0.1396280838497228)
		\psline[linewidth=0.4pt](4.897093993478191,-0.19309210973622104)(5.063864563959763,0.041550204545991454)
		\psline[linewidth=0.4pt](5.171831670622585,0.1934574127576385)(5.225996116665016,0.26966552870105875)
		\psline[linewidth=0.4pt](5.303521564244832,0.37874203052847477)(6.028929996896126,1.3993748253053027)
		\psline[linewidth=0.4pt](7.269533634699795,0.1396280838497228)(8.281773900302955,0.1396280838497228)
		\psline[linewidth=0.4pt](7.735929250945818,-0.19309210973622104)(7.90269982142739,0.041550204545991454)
		\psline[linewidth=0.4pt](8.281773900302955,0.1396280838497228)(9.018718781171952,0.7055279489060885)
		\psline[linewidth=0.4pt](9.018718781171952,0.7055279489060885)(8.128797946347085,1.8037728899514436)
		\psline[linewidth=0.4pt](7.499033343253692,1.267765987698565)(8.157242757668637,0.6445057457304273)
		\psline[linewidth=0.4pt](8.712711023292774,1.1812171281007837)(8.867765254363754,1.3993748253053027)
		\psline[linewidth=0.4pt](8.010666928090213,0.1934574127576385)(8.607534832742825,1.033236673954923)
		\psline[linewidth=0.4pt](8.338335061812483,0.473028962160591)(8.455550279645946,0.36203756120324015)
		\psline[linewidth=0.4pt](8.56690329935411,0.2565970912140936)(8.986213208209817,-0.14044857469352579)
		\psline[linewidth=0.4pt](10.108368892167423,0.1396280838497228)(11.120609157770582,0.1396280838497228)
		\psline[linewidth=0.4pt](10.574764508413445,-0.19309210973622104)(10.741535078895017,0.041550204545991454)
		\psline[linewidth=0.4pt](10.337868600721318,1.267765987698565)(10.996078015136264,0.6445057457304273)
		\psline[linewidth=0.4pt](11.551546280760402,1.1812171281007837)(11.70660051183138,1.3993748253053027)
		\psline[linewidth=0.4pt](10.84950218555784,0.1934574127576385)(11.446370090210452,1.033236673954923)
		\psline[linewidth=0.4pt](10.967633203814712,1.8037728899514436)(12.294598343320652,0.1396280838497228)
		\psline[linewidth=0.4pt](11.120609157770582,0.1396280838497228)(12.294598343320652,0.1396280838497228)
		\psline[linewidth=0.4pt](11.424580284505753,0.2387558092478126)(11.17717031928011,0.473028962160591)
		\psline[linewidth=0.4pt](11.594699520497196,0.0776694530435248)(11.825048465677444,-0.14044857469352579)
		\psline[linewidth=0.4pt]{->}(-0.6695375726110528,6.804753923346491)(-3.23554485688226,4.268492438630979)
		\psline[linewidth=0.4pt](10.381196865363844,-2.396633400865789)(11.393437130967003,-2.396633400865789)
		\psline[linewidth=0.4pt](10.847592481609865,-2.729353594451733)(11.014363052091438,-2.49471128016952)
		\psline[linewidth=0.4pt](10.610696573917739,-1.2684954970169469)(11.268905988332685,-1.8917557389850845)
		\psline[linewidth=0.4pt](11.824374253956822,-1.355044356614728)(11.979428485027801,-1.1368866594102092)
		\psline[linewidth=0.4pt](11.122330158754261,-2.3428040719578735)(11.719198063406873,-1.5030248107605888)
		\psline[linewidth=0.4pt](11.240461177011133,-0.7324885947640682)(12.567426316517073,-2.396633400865789)
		\psline[linewidth=0.4pt](11.393437130967003,-2.396633400865789)(12.567426316517073,-2.396633400865789)
		\psline[linewidth=0.4pt](11.697408257702174,-2.2975056754676992)(11.449998292476531,-2.063232522554921)
		\psline[linewidth=0.4pt](11.867527493693617,-2.458592031671987)(12.097876438873865,-2.6767100594090376)
		\psline[linewidth=0.4pt]{->}(2.1692976848565744,6.804753923346491)(-0.6695375726110528,6.804753923346491)
		\psline[linewidth=0.4pt](7.542361607896217,-2.396633400865789)(8.554601873499376,-2.396633400865789)
		\psline[linewidth=0.4pt](8.008757224142238,-2.729353594451733)(8.175527794623811,-2.49471128016952)
		\psline[linewidth=0.4pt](7.771861316450112,-1.2684954970169469)(8.430070730865058,-1.8917557389850845)
		\psline[linewidth=0.4pt](8.985538996489195,-1.355044356614728)(9.140593227560174,-1.1368866594102092)
		\psline[linewidth=0.4pt](8.283494901286634,-2.3428040719578735)(8.880362805939246,-1.5030248107605888)
		\psline[linewidth=0.4pt](8.401625919543505,-0.7324885947640682)(9.728591059049446,-2.396633400865789)
		\psline[linewidth=0.4pt](8.554601873499376,-2.396633400865789)(9.728591059049446,-2.396633400865789)
		\psline[linewidth=0.4pt](8.858573000234546,-2.2975056754676992)(8.611163035008904,-2.063232522554921)
		\psline[linewidth=0.4pt](9.02869223622599,-2.458592031671987)(9.259041181406237,-2.6767100594090376)
		\begin{scriptsize}
		\psdots[dotsize=2pt 0,dotstyle=*](-0.6695375726110528,6.804753923346491)
		\psdots[dotsize=2pt 0,dotstyle=*](2.1692976848565744,6.804753923346491)
		\psdots[dotsize=2pt 0,dotstyle=*](-0.6695375726110528,5.40563796425608)
		\psdots[dotsize=2pt 0,dotstyle=*](-6.4895721150954095,4.294551717384737)
		\psdots[dotsize=2pt 0,dotstyle=*](4.57306240478127,2.6498302898114767)
		\psdots[dotsize=2pt 0](5.43232671642856,4.3139750959131975)
		\psdots[dotsize=2pt 0,dotstyle=*](6.7592918559345,2.6498302898114767)
		\psdots[dotsize=2pt 0,dotstyle=*](7.411897662248897,2.6498302898114767)
		\psdots[dotsize=2pt 0,dotstyle=*](8.271161973896188,4.3139750959131975)
		\psdots[dotsize=2pt 0,dotstyle=*](9.598127113402128,2.6498302898114767)
		\psdots[dotsize=2pt 0,dotstyle=*](2.974178179310998,6.804753923346491)
		\psdots[dotsize=2pt 0,dotstyle=*](5.444718839385009,6.793898792444937)
		\psdots[dotsize=2pt 0,dotstyle=*](-1.2469721377030867,0.1396280838497228)
		\psdots[dotsize=2pt 0,dotstyle=*](-0.3877078260557969,1.8037728899514436)
		\psdots[dotsize=2pt 0](0.9392573134501436,0.1396280838497228)
		\psdots[dotsize=2pt 0,dotstyle=*](2.4511274314118303,1.8037728899514436)
		\psdots[dotsize=2pt 0,dotstyle=*](1.5918631197645405,0.1396280838497228)
		\psdots[dotsize=2pt 0,dotstyle=*](1.5918631197645405,0.1396280838497228)
		\psdots[dotsize=2pt 0,dotstyle=*](2.0829085497142685,0.5891824837112023)
		\psdots[dotsize=2pt 0,dotstyle=*](4.430698377232168,0.1396280838497228)
		\psdots[dotsize=2pt 0,dotstyle=*](4.921743807181896,0.5891824837112023)
		\psdots[dotsize=1pt 0,dotstyle=*](4.921743807181896,0.5891824837112023)
		\psdots[dotsize=2pt 0,dotstyle=*](5.2899626888794575,1.8037728899514436)
		\psdots[dotsize=2pt 0,dotstyle=*](4.430698377232168,0.1396280838497228)
		\psdots[dotsize=2pt 0,dotstyle=*](5.442938642835327,0.1396280838497228)
		\psdots[dotsize=2pt 0,dotstyle=*](8.128797946347085,1.8037728899514436)
		\psdots[dotsize=2pt 0,dotstyle=*](8.281773900302955,0.1396280838497228)
		\psdots[dotsize=2pt 0,dotstyle=*](7.269533634699795,0.1396280838497228)
		\psdots[dotsize=2pt 0,dotstyle=*](7.269533634699795,0.1396280838497228)
		\psdots[dotsize=2pt 0,dotstyle=*](9.018718781171952,0.7055279489060885)
		\psdots[dotsize=2pt 0,dotstyle=*](10.108368892167423,0.1396280838497228)
		\psdots[dotsize=2pt 0,dotstyle=*](11.120609157770582,0.1396280838497228)
		\psdots[dotsize=2pt 0,dotstyle=*](11.120609157770582,0.1396280838497228)
		\psdots[dotsize=2pt 0,dotstyle=*](10.967633203814712,1.8037728899514436)
		\psdots[dotsize=2pt 0,dotstyle=*](10.108368892167423,0.1396280838497228)
		\psdots[dotsize=2pt 0,dotstyle=*](12.294598343320652,0.1396280838497228)
		\psdots[dotsize=2pt 0,dotstyle=*](11.794180873592163,0.7672008122797176)
		\psdots[dotsize=1pt 0,dotstyle=*](-3.23554485688226,4.268492438630979)
		\psdots[dotsize=2pt 0,dotstyle=*](10.381196865363844,-2.396633400865789)
		\psdots[dotsize=2pt 0,dotstyle=*](11.393437130967003,-2.396633400865789)
		\psdots[dotsize=2pt 0,dotstyle=*](11.240461177011133,-0.7324885947640682)
		\psdots[dotsize=2pt 0,dotstyle=*](12.567426316517073,-2.396633400865789)
		\psdots[dotsize=2pt 0,dotstyle=*](11.393437130967003,-2.396633400865789)
		\psdots[dotsize=2pt 0,dotstyle=*](12.567426316517073,-2.396633400865789)
		\psdots[dotsize=2pt 0,dotstyle=*](10.381196865363844,-2.396633400865789)
		\psdots[dotsize=2pt 0,dotstyle=*](7.542361607896217,-2.396633400865789)
		\psdots[dotsize=2pt 0,dotstyle=*](8.401625919543505,-0.7324885947640682)
		\psdots[dotsize=2pt 0,dotstyle=*](9.728591059049446,-2.396633400865789)
		\psdots[dotsize=2pt 0,dotstyle=*](9.728591059049446,-2.396633400865789)
		\psdots[dotsize=2pt 0,dotstyle=*](7.542361607896217,-2.396633400865789)
		\end{scriptsize}
		\small
		\rput[tl](4.3,2.5){$A$}
		\rput[tl](6.65,2.5){$B$}
		\rput[tl](8.1,4.7){$C$}
		\rput[tl](7.2,2.5){$A$}
		\rput[tl](9.5,2.5){$B$}
		\rput[tl](5.2,4.7){$C$}
		\rput[tl](-1.5,0){$A$}
		\rput[tl](-0.58,2.15){$B$}
		\rput[tl](0.8114247935766407,0){$C$}
		\rput[tl](7.339577897158636,-2.5){$A$}
		\rput[tl](8.2,-0.32){$B$}
		\rput[tl](9.61,-2.5){$C$}
		\rput[tl](6.2,3.9312814253551203){$\overset{\Omega_3\ \circ \ isot.1}{\sim}$}
		\rput[tl](0.9,1.4){$\overset{isot. \ 3}{\sim}$}
		\rput[tl](3.5,1.4){$\underset{isot.1's}{\overset{\Omega_2}{\sim}}$}
		\rput[tl](6.573519114595443,1.4){$\overset{\Omega_3}{\sim}$}
		\rput[tl](9.35,1.4){$\overset{isot. \ 3}{\sim}$}
		\rput[tl](9.537833534079104,-1.1813282756644967){$\overset{isot. \ 1}{\sim}$}
		\tiny
		\rput[tl](10.720228611513598,-0.3){$\wr \ isot. \ 1$}\normalsize
		\end{pspicture*}
		\caption{Two cases of decomposing a move through a triangle $\delta_i$ of type (i) on $\pi$ to a sequence of Reidemeister, slide or planar isotopies moves.}
		\label{figure_decomposing_type_i_moves}
	\end{figure}
	
	Case (II). $\Delta=ABC$ is a space slide move. Then it does not exchange an edge by two others or vice versa as above, but it instead replaces in space one position of the initial or final edge of $c_1,c_2$ by another position, say from $AB$ to $CB$ with $A,C$ on a rail $\ell$.
	
	We spot  all crossing points of $AC$ with   $c_{pr}$. 	If any, we call these points in increasing distance from $A$ as $X_1,X_2,\ldots X_k$. Let $Y_i$ be an interior point in the segment $X_iX_{i+1}$ for $i=1,\ldots,k-1$.  We decompose move $\Delta$ to a sequence of moves through the triangles $BAY_1, BY_1Y_2,\ldots,BY_{k-2}Y_{k-1},BY_{k-1}C$; we are allowed to, by Lemma \ref{lemma_triangle_moves}. We'll prove that each one of these moves is as required, and then by Lemma \ref{lemma_triangle_moves} we'll get that $\Delta$ is also as required and we will be done. It is enough to prove that the move $\delta$ through $BAY_1$ satisfies our Lemma, since the reasoning will apply to all other triangles in this sequence as well. To this end:

	\begin{figure}[!h]
		\centering
		\psset{xunit=1.0cm,yunit=1.0cm,algebraic=true,dimen=middle,dotstyle=o,dotsize=5pt 0,linewidth=1.6pt,arrowsize=3pt 2,arrowinset=0.25}
		\begin{pspicture*}(1.282992947585356,0.12137923193432268)(4.861448000854534,5.558119716374894)
		\psaxes[labelFontSize=\scriptstyle,xAxis=true,yAxis=true,Dx=1.,Dy=1.,ticksize=-2pt 0,subticks=2]{->}(0,0)(1.282992947585356,0.12137923193432268)(4.861448000854534,5.558119716374894)
		\psline[linewidth=0.4pt]{->}(-0.6695375726110528,6.804753923346491)(2.1692976848565744,6.804753923346491)
		\psline[linewidth=0.4pt]{->}(-0.6695375726110528,6.804753923346491)(-0.6695375726110528,5.40563796425608)
		\psline[linewidth=0.4pt]{->}(-0.6695375726110528,6.804753923346491)(-6.4895721150954095,4.294551717384737)
		\psline[linewidth=0.4pt]{->}(-0.6695375726110528,6.804753923346491)(2.974178179310998,6.804753923346491)
		\psline[linewidth=0.4pt]{->}(-0.6695375726110528,6.804753923346491)(5.444718839385009,6.793898792444937)
		\psline[linewidth=0.4pt]{->}(-0.6695375726110528,6.804753923346491)(-3.23554485688226,4.268492438630979)
		\psline[linewidth=0.4pt]{->}(2.1692976848565744,6.804753923346491)(-0.6695375726110528,6.804753923346491)
		\psline[linewidth=0.4pt](4.341956574085272,3.4483313233044046)(1.9771664192162834,1.)
		\psline[linewidth=0.4pt,linestyle=dashed,dash=1pt 1pt](4.341956574085272,3.4483313233044046)(1.9771664192162834,4.863874725866838)
		\psline[linewidth=0.8pt,linestyle=dotted](1.9771664192162834,3.)(4.341956574085272,3.4483313233044046)
		\psline[linewidth=0.8pt,linestyle=dotted](2.726930951536077,2.594907812615137)(1.9771664192162834,1.)
		\psline[linewidth=0.8pt,linestyle=dotted](2.726930951536077,2.594907812615137)(1.9771664192162834,3.)
		\psline[linewidth=0.8pt,linestyle=dotted](2.726930951536077,2.594907812615137)(4.341956574085272,3.4483313233044046)
		\psline[linewidth=0.8pt,linestyle=dotted](4.341956574085272,3.4483313233044046)(1.9771664192162834,3.8480141663808562)
		\psline[linewidth=0.4pt](1.5907186699453741,3.8593192131308)(2.2901874311203367,3.1455755792787916)
		\psline[linewidth=0.4pt](1.4354071949408849,2.4816466632381213)(1.8655205286487384,2.412071847315856)
		\psline[linewidth=0.4pt](2.1141954322587564,2.3718463781493835)(2.4118136137345134,2.3237038804202625)
		\psline[linewidth=0.4pt](1.3907887521832185,4.686064513559656)(1.8250813377748645,4.521737589281734)
		\psline[linewidth=0.4pt](2.0821540507771648,4.424466833010593)(2.480345410635691,4.273799831983041)
		\psline[linewidth=0.8pt,linecolor=red](1.9771664192162834,0.38119853327726183)(1.9771664192162834,3.335710788555077)
		\psline[linewidth=0.8pt,linecolor=red](1.9771664192162834,3.6244983893452227)(1.9771664192162834,5.477154782502023)
		\begin{scriptsize}
		\psdots[dotsize=2pt 0,dotstyle=*](-0.6695375726110528,6.804753923346491)
		\psdots[dotsize=2pt 0,dotstyle=*](2.1692976848565744,6.804753923346491)
		\psdots[dotsize=2pt 0,dotstyle=*](-0.6695375726110528,5.40563796425608)
		\psdots[dotsize=2pt 0,dotstyle=*](-6.4895721150954095,4.294551717384737)
		\psdots[dotsize=2pt 0,dotstyle=*](2.974178179310998,6.804753923346491)
		\psdots[dotsize=2pt 0,dotstyle=*](5.444718839385009,6.793898792444937)
		\psdots[dotsize=1pt 0,dotstyle=*](-3.23554485688226,4.268492438630979)
		\psdots[dotsize=2pt 0](1.9771664192162834,4.863874725866838)
		\psdots[dotsize=2pt 0,dotstyle=*](1.9771664192162834,1.)
		\psdots[dotsize=2pt 0,dotstyle=*](4.341956574085272,3.4483313233044046)
		\psdots[dotsize=2pt 0](1.9771664192162834,3.)
		\psdots[dotsize=1pt 0](1.9771664192162834,3.4649847750992566)
		\psdots[dotsize=1pt 0](1.9771664192162834,4.464191882790386)
		\psdots[dotsize=2pt 0](2.726930951536077,2.594907812615137)
		\psdots[dotsize=1pt 0](1.9771664192162834,2.3940120903263407)
		\psdots[dotsize=2pt 0](1.9771664192162834,3.8480141663808562)
		\end{scriptsize}
		\rput[tl](1.52,1.2){$A$}
		\rput[tl](1.52,5.1){$C$}
		\rput[tl](4.45,3.7){$B'$}
		\tiny
		\rput[tl](1.55,2.3){$X_1$}
		\rput[tl](1.55,3.5){$X_2$}
		\rput[tl](1.55,4.4){$X_3$}
		\rput[tl](2,2.9){$Y_1$}
		\rput[tl](2,4){$Y_2$}
		\rput[tl](2.67,2.55){$M'$}
		\rput[tl](3.1,2.6){$\delta_{1pr}$}
		\rput[tl](2.0112399408822417,2.1){$\delta_{2pr}$}
		\rput[tl](2.6641510383208287,3.05){$\delta_{3pr}$}
		\normalsize
		\rput[tl](3.6058497365495596,4.553641104930908){$\Delta_{pr}$}
		\rput[tl](1.7098963574490476,0.6738424682285147){$\ell$}
		\end{pspicture*}
		\caption{Decomposing a space slide move $\Delta=ABC$ to smaller moves. $ABC=AY_1B\cup Y_1Y_2B \cup \cdots$, and $AY_1B=\delta_1 \cup \delta_2 \cup \delta_3$ are depicted here as projected on $\pi$. Solid black lines are part of the projected arc. Triangle moves via the $\delta_i$'a are good.}
		\label{figure_decomposing_type_i_moves_2}
	\end{figure}
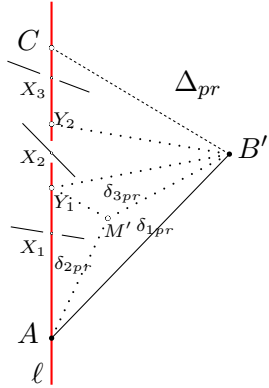

	$A$ and $Y_1$ project on $\pi$ onto themselves. Let us call the projection of $B$ as $B'$. We triangulate $B'AY_1$ so that one of the triangles is $AY_1M'$, where $M'$ is the projection of a point $M$ on $AX_1B$ chosen so close to the rail $\ell$ so that $AY_1M'$ contains no crossings of arcs of $pr_c$. Let us consider the following be moves in space: $\delta_1$ in triangle $AMB$ that replaces $AB$ by $AM,MB$, $\delta_2$ in triangle $MY_1B$ that replaces  $AM$ by $Y_1M$ (a slide move), and $\delta_3$ in triangle $CMB$ that replaces $Y_1M,MB$ by $Y_1B$. Then $\delta=\delta_3 \circ \delta_2, \delta_1$. But by Case (I), we know that $\delta_1,\delta_3$ are as required by our Lemma, whereas $\delta_2$ projects to a slide move thus it is also as required by our Lemma. Since the triangles of these moves are part of the triangle $ABC$ of move $\Delta$, Lemma \ref{lemma_triangle_moves} implies that $\delta$ is as required and we are done.
	\end{proof}

So if $\Delta$ is a triangle move between two rail arcs $c_1,c_2$ and $\Delta=\Delta_k \circ \cdots \circ \Delta_i$ a decomposition to nice and good moves as assured in Lemma \ref{lemma_nice_and_good_moves}, then Lemma \ref{lemma_triangle_moves} allows us to replace in the decomposition of $\Delta$, any good move $\Delta_i$ with a product of nice moves as in Lemma \ref{lemma_good_moves}. Thus we proved:

 \begin{lemma}
 		Any triangle move $\Delta$  between two rail arcs $c_1,c_2$ is a finite composition $\Delta_k \circ \cdots \circ \Delta_i$ of nice moves with triangles satisfying $\Delta_i\subseteq \Delta, \ \forall i$.
 \end{lemma}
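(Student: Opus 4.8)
The plan is to assemble the statement directly from the two preceding lemmas together with the substitution principle of Lemma \ref{lemma_triangle_moves}; no new geometry is needed. First I would invoke Lemma \ref{lemma_nice_and_good_moves} to write the given triangle move as $\Delta = \Delta_m \circ \cdots \circ \Delta_1$, where each $\Delta_j$ is either nice or good and each satisfies $\Delta_j \subseteq \Delta$. This already gives a decomposition into sufficiently \emph{small} moves; the only remaining task is to eliminate the good factors that are not themselves nice.

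Next I would treat each such good factor separately. For every index $j$ for which $\Delta_j$ is good but not already nice, Lemma \ref{lemma_good_moves} produces a finite composition $\Delta_j = \delta_{j,l_j} \circ \cdots \circ \delta_{j,1}$ of nice moves whose triangles all satisfy $\delta_{j,i} \subseteq \Delta_j$. Since $\Delta_j \subseteq \Delta$, transitivity of inclusion yields $\delta_{j,i} \subseteq \Delta$ for every such $i$, so all the finer pieces still lie inside the original triangle.

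Then I would splice these refinements back into the global decomposition. Because each $\delta_{j,i}$ is contained in $\Delta_j$, the first clause of Lemma \ref{lemma_triangle_moves} applies and guarantees that replacing $\Delta_j$ in $\Delta_m \circ \cdots \circ \Delta_1$ by $\delta_{j,l_j} \circ \cdots \circ \delta_{j,1}$ still yields a legitimate composite move carrying $c_1$ to $c_2$. Performing this substitution for every good non-nice factor and relabelling the resulting list of moves as $\Delta_k \circ \cdots \circ \Delta_1$, one obtains a composition consisting solely of nice moves, each with triangle contained in $\Delta$, which is exactly the asserted conclusion.

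I do not expect a genuine obstacle here, since all the substantive work has been front-loaded into Lemmas \ref{lemma_nice_and_good_moves} and \ref{lemma_good_moves}. The single point that requires care is purely combinatorial: one must check that the substitutions are legitimate, i.e. that the nested inclusions $\delta_{j,i} \subseteq \Delta_j \subseteq \Delta$ are precisely the hypothesis needed to apply the first clause of Lemma \ref{lemma_triangle_moves}, and that these inclusions are preserved when several good factors are refined at once.
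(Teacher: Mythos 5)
Your proposal is correct and follows essentially the same route as the paper: decompose $\Delta$ via Lemma \ref{lemma_nice_and_good_moves} into nice and good submoves contained in $\Delta$, refine each good factor into nice moves via Lemma \ref{lemma_good_moves}, and justify the substitutions with the first clause of Lemma \ref{lemma_triangle_moves}, noting that the inclusions $\delta_{j,i}\subseteq\Delta_j\subseteq\Delta$ persist. No gap; this matches the paper's argument.
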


So if the two rail arcs $c_1,c_2$ are rail isotopic, thus related by a finite sequence of triangle moves, we can replace by Lemma \ref{lemma_triangle_moves} any one of these moves by a sequence of moves as assured in Lemma \ref{lemma_good_moves}, and we get:

\begin{corollary}\label{corollary_nice_moves}
		If the two rail arcs $c_1,c_2$ are rail isotopic, then they are related by a finite sequence of nice moves.
\end{corollary}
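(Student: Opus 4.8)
The plan is to derive this immediately from the decomposition result just proved — that every triangle move between two rail arcs factors as a finite composition of \emph{nice} moves whose triangles are contained in the original triangle — together with the bookkeeping Lemma \ref{lemma_triangle_moves}. First I would recall from \S\ref{section_rail_isotopies} that a rail isotopy between two p.l.\ rail arcs is realized by a finite sequence of triangle moves (ordinary moves and space slide moves). So if $c_1,c_2$ are rail isotopic we may write
\[
c_1 = b_0 \overset{\Delta^{1}}{\sim} b_1 \overset{\Delta^{2}}{\sim} \cdots \overset{\Delta^{n}}{\sim} b_n = c_2 ,
\]
where each $\Delta^{j}$ is a triangle move carrying the rail arc $b_{j-1}$ onto $b_j$.

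Next I would apply the Lemma immediately preceding this Corollary to each $\Delta^{j}$ separately: it gives a factorization $\Delta^{j} = \Delta^{j}_{k_j}\circ\cdots\circ\Delta^{j}_1$ in which every $\Delta^{j}_i$ is a nice move and every triangle $\Delta^{j}_i$ satisfies $\Delta^{j}_i\subseteq\Delta^{j}$. Since each sub-triangle $\Delta^{j}_i$ lies inside the triangle of a move $\Delta^{j}$ that was already legitimate for $b_{j-1}$, the first bullet of Lemma \ref{lemma_triangle_moves} applies and lets us replace $\Delta^{j}$ in the above chain by the composite $\Delta^{j}_{k_j}\circ\cdots\circ\Delta^{j}_1$ while still connecting $b_{j-1}$ to $b_j$; in particular no sub-move interferes with the rest of the current arc, precisely because its triangle is contained in a triangle which did not. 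Performing this substitution for every $j$ and concatenating the resulting lists produces a single finite sequence of nice moves running from $c_1$ to $c_2$, which is the claim.

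I do not expect a genuine obstacle here: the content of the Corollary is essentially a repackaging of the last Lemma. The only point that needs care is the repeated appeal to Lemma \ref{lemma_triangle_moves} — one must check that substituting a triangle move by a factorization through sub-triangles of its own triangle preserves the property of connecting the same pair of rail arcs — but that is exactly the first bullet of that Lemma, applied $n$ times, and the remaining bookkeeping (reindexing the concatenated sequence of nice moves) is routine.
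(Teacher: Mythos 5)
Your proposal is correct and follows essentially the same route as the paper: realize the rail isotopy as a finite sequence of triangle moves, replace each move by its factorization into nice moves with sub-triangles contained in the original triangle (the Lemma just proved), and justify each substitution by the first bullet of Lemma \ref{lemma_triangle_moves}. This matches the paper's own (very brief) derivation of the Corollary.
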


The following is a special case of (one part of) the main Theorem that follows.
\begin{lemma}\label{lemma_arcs_with_same projection}
	If $c_1,c_2$ are two rail arcs with exactly the same (pointwise) projection $c_{1pr}=c_{2pr}$  on the plane $\pi$ of the rails, then there exists a rail isotopy between the arcs.
\end{lemma}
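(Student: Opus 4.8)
The plan is to join $c_1$ and $c_2$ by the obvious ``vertical'' homotopy that linearly slides the height function of $c_1$ to that of $c_2$, to check that the coincidence of the decorated projections keeps this homotopy inside the class of rail arcs, and finally to promote the resulting isotopy of arcs to an honest ambient rail isotopy of $\mathbb{R}^3$. Here $c_{1pr}=c_{2pr}$ is read as equality of the \emph{decorated} diagrams, i.e.\ the two projections coincide pointwise together with the over/under information at all self--crossings and at all crossings with the rails — which is exactly the hypothesis that makes the statement a special case of the main theorem.

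First I would normalise the picture: choose coordinates with $\pi=\{z=0\}$ and $\ell_1,\ell_2\subset\pi$. Because $c_{1pr}=c_{2pr}$ as decorated diagrams, $c_1$ and $c_2$ traverse the common underlying immersed curve in the same way — at a transverse double point the over--branch of the projection is the over--branch for each of the two lifts (heights vary continuously, so the pairing of the four branch--ends at a crossing is dictated by the common over/under datum), and the endpoints are distinguished by lying on $\ell_1$, respectively $\ell_2$. Hence, after reparametrising $c_2$, I may write
\[
c_1(t)=\bigl(\gamma(t),z_1(t)\bigr),\qquad c_2(t)=\bigl(\gamma(t),z_2(t)\bigr),\qquad t\in[0,1],
\]
where $\gamma\colon[0,1]\to\pi$ is the common generic projection and $z_1,z_2$ are p.l.\ functions with $z_1(0)=z_2(0)=z_1(1)=z_2(1)=0$; moreover at every transverse self--crossing $\gamma(t)=\gamma(t')$ and at every transverse crossing of $\gamma$ with a rail, the common decoration forces $z_1$ and $z_2$ to be nonzero there with the same sign/ordering.

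Next I would set, for $\sigma\in[0,1]$, $c^{\sigma}(t)=\bigl(\gamma(t),(1-\sigma)z_1(t)+\sigma z_2(t)\bigr)$ and check that each $c^{\sigma}$ is a rail arc. Injectivity: if $c^{\sigma}(t)=c^{\sigma}(t')$ with $t\ne t'$ then $\gamma(t)=\gamma(t')$ is a transverse double point, where $z_1(t)-z_1(t')$ and $z_2(t)-z_2(t')$ are nonzero of the same sign, so their convex combination is nonzero — a contradiction. Avoidance of the rails: an interior point of $c^{\sigma}$ can lie on a rail only over a transverse rail--crossing of $\gamma$, where $z_1$ and $z_2$ are nonzero of the same sign, hence so is $(1-\sigma)z_1+\sigma z_2$. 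And $c^{\sigma}(0)=(\gamma(0),0)=c_1(0)\in\ell_1$ and $c^{\sigma}(1)=c_1(1)\in\ell_2$ for every $\sigma$. Thus $\sigma\mapsto c^{\sigma}$ is a p.l.\ isotopy through rail arcs from $c_1$ to $c_2$ that is constant on the two endpoints.

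Finally I would upgrade this to an ambient rail isotopy. By a preliminary rail isotopy supported in small neighbourhoods of the leg and of the head — over the initial and final sub--arcs of $\gamma$, which are embedded, disjoint from the rest of $\gamma$, and meet the rails only at the respective endpoints, so the $z$--coordinate there may be deformed freely without ever meeting a rail — I may assume $z_1=z_2$ on $[0,\varepsilon]\cup[1-\varepsilon,1]$. Then $c^{\sigma}$ is stationary near the endpoints, and the trace of its moving part, $\bigcup_{\sigma}c^{\sigma}([\varepsilon,1-\varepsilon])$, is a \emph{compact} subset of the open manifold $U=\mathbb{R}^3\setminus(\ell_1\cup\ell_2)$ (by the rail--crossing argument above it misses $\ell_1\cup\ell_2$). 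Applying the p.l.\ isotopy--extension theorem inside $U$, rel the fixed ends, yields an ambient p.l.\ isotopy of $U$ with support a compact neighbourhood of that trace inside $U$; extending it by the identity gives a p.l.\ isotopy of $\mathbb{R}^3$ which is the identity on $\ell_1\cup\ell_2$ and which carries $c_1$ onto $c_2$ — that is, a rail isotopy. (Equivalently, one may finely subdivide $[0,1]\times[0,1]$ and realise $c^{\sigma}$ as a composite of triangle moves whose triangles are thin slivers inside the trace of the moving part, hence disjoint from the rails, so every move is a legitimate rail--respecting triangle move.) The substantive point in all of this is the middle step: the common over/under data at the self--crossings \emph{and} at the rail--crossings is precisely what is needed to keep the linear interpolation of heights embedded and off the rails for every $\sigma$; the concluding step is routine p.l.\ topology, the only delicacy being the endpoints, which the preliminary adjustment removes.
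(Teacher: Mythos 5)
Your argument is correct, but it takes a genuinely different route from the paper's. The paper explicitly distrusts the very step you take — it remarks that a simultaneous vertical slide of the whole arc is ``not clear at all'' to work — and instead argues locally: after two rounds of subdivision it isolates each crossing of the common projection inside nested cylinders $C_A\subset D_A$, pushes first the upper and then the lower strand vertically into place by isotopies supported in $D_A$ (the matching over/under datum is what excludes the obstructed configuration), and only afterwards performs a simultaneous vertical push of the remaining vertices, using the matching data at rail crossings to see that the swept quadrilaterals miss the rails. You justify the global slide directly: writing both arcs as graphs of height functions over the common generic projection, the convex combination $(1-\sigma)z_1+\sigma z_2$ stays embedded and off the rails because at every self-crossing and every rail-crossing the relevant height differences (resp.\ heights) are nonzero of the same sign for $c_1$ and $c_2$, and convex combinations preserve this — the same key observation as the paper's, used once globally instead of crossing-by-crossing, which makes for a shorter proof. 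What the paper's route buys is that it stays entirely inside the triangle-move framework of Lemma \ref{lemma_triangle_moves}, so the output is a rail isotopy in the paper's own sense with no external machinery; your concluding step instead invokes PL isotopy extension, where two small caveats arise: the track $(t,\sigma)\mapsto\bigl(\gamma(t),(1-\sigma)z_1(t)+\sigma z_2(t)\bigr)$ is bilinear, hence PL only after an approximation or subdivision, and isotopy extension in codimension $2$ requires local unknottedness (automatic for $1$-complexes, but worth saying). Your parenthetical alternative — realizing the interpolation by thin vertical triangle moves, legitimate because the trace of the moving part is compact and at positive distance from the rails and the height gaps at crossings are uniformly bounded below — sidesteps both points and is in spirit closest to what the paper actually does.
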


\begin{proof}
	If necessary, we subdivide the two arcs so that they get the same number of vertices and every vertex of each one of them lies on the same vertical line (with respect to $\pi$) with a vertex of the other. Since this subdivision can be performed via triangle moves, the resulting arcs $c'_1,c'_2$ are related to the original ones via isotopies. Pointwise the subdivided arcs have not changed.
	
	We perform a further subdivision of $c'_1,c'_2$ as thus: if $e_1,f_1$ are two edges of $c_1'$ whose projections have a crossing point $A$ on $\pi$, and $e_2,f_2$ the corresponding edges of $c_2$ with the same projections on $\pi$, then we chose on $e_{1pr}=e_{2pr}$ two nearby points on each of the two sides of $A$ and lift them vertically to two points on each of $e_1,e_2$ as new vertices; and similarly for $f_1,f_2$. We do so for every crossing point on the projections on $\pi$, and remaining very close to each crossing point, the segments on the old edges between the new vertices, remain disjoint, even if they happen to appear on the same old edge. Since the insertion of all these new vertices can be performed via triangle moves, the new arcs $c_1'',c_2''$ are related to $c_1',c_2'$, and then to the original, ones via isotopies. Pointwise the subdivided arcs have not changed.
	
	So actually we need to show that there exists an isotopy between $c_1'',c_2''$. This is not that hard to see, but it is rather technical:
	
	$c_1'',c_2''$ have the same number of vertices, paired so that any one of the first, along with its pair from the second lie on the same vertical line. Let's call such vertices as corresponding. Let us also call two edges, one from each arc, as corresponding whenever their endpoints are corresponding. 	We cannot just slide each vertex of $c_1''$ on its vertical  (w.r.t. to $\pi$) line to make it take the place of the corresponding vertex of $c_2''$ because this  causes a sliding of the issuing edges from these vertices, and these edges may be obstructed by other edges below them. Even if we try to slide all vertices at once in order to prevent such obstructions, it is not clear at al that such a simultaneous slide will have the desired result. So we deal first with the possible obstructions. Throughout below, we think of $c_2''$ as the fixed ideal position to push $c_1''$ to. And we perform isotopies on space changing the position of $c_1''$, but the ideal position of $c_2''$ remains fixed. 
	
	Obstructions occur whenever edges of $c_1$ project on $\pi$ forming crossing points. We'll deal first with the crossing points of two projected edges, ignoring any crossing points of a projected edge with the rails.
	
\begin{figure}[!h]
	\centering
	\psset{xunit=1.0cm,yunit=1.0cm,algebraic=true,dimen=middle,dotstyle=o,dotsize=5pt 0,linewidth=1.6pt,arrowsize=3pt 2,arrowinset=0.25}
	\begin{pspicture*}(0.81607641007183,0.7537939510585828)(9.854059197199275,9.844941342816208)
	\psaxes[labelFontSize=\scriptstyle,xAxis=true,yAxis=true,Dx=1.,Dy=1.,ticksize=-2pt 0,subticks=2]{->}(0,0)(0.81607641007183,0.7537939510585828)(9.854059197199275,9.844941342816208)
	\rput{-1}(4.98366553214101,2.290913617237049){\psellipse[linewidth=0.4pt](0,0)(0.605282899166271,0.45350950072752944)}
	\rput{-1}(4.98366553214101,2.2909136172370497){\psellipse[linewidth=0.4pt](0,0)(1.1574925601195851,0.8537493459564961)}
	\psline[linewidth=0.4pt](0.494620070686265,0.7537939510585828)(0.494620070686265,9.844941342816208)
	\psline[linewidth=0.4pt]{->}(0.494620070686265,3.529052040127685)(0.494620070686265,10.097223006506315)
	\rput{-1}(4.98366553214101,8.85908458361568){\psellipse[linewidth=0.4pt](0,0)(1.1574925601195851,0.8537493459564961)}
	\rput{-1}(4.98366553214101,8.85908458361568){\psellipse[linewidth=0.4pt](0,0)(0.605282899166271,0.45350950072752944)}
	\psline[linewidth=0.8pt,linestyle=dotted](6.140529608425473,8.820946207474433)(6.140529608425473,2.252775241095803)
	\psline[linewidth=0.4pt]{->}(0.494620070686265,3.529052040127685)(0.494620070686265,8.964779736441034)
	\psline[linewidth=0.8pt,linestyle=dotted](2.4490214371841574,2.374473312675188)(2.4490214371841574,7.810201008988538)
	\psline[linewidth=0.8pt,linestyle=dotted](7.706912908210799,2.2011362312127702)(7.706912908210799,7.63686392752612)
	\psline[linewidth=0.8pt,linestyle=dotted](2.4490214371841574,7.810201008988538)(7.706912908210799,7.63686392752612)
	\psline[linewidth=0.8pt,linestyle=dotted](3.2171542549463363,6.860311123390323)(3.2171542549463363,1.4245834270769735)
	\psline[linewidth=0.8pt](2.4490214371841574,5.329367386579625)(3.011132511908649,5.530006001089392)
	\psline[linewidth=0.8pt](3.262585755796804,5.619759134831568)(3.826801455856547,5.821148969394302)
	\psline[linewidth=0.8pt,linecolor=cyan](3.826801455856547,5.821148969394302)(4.423618912155867,6.0341755987056525)
	\psline[linewidth=0.8pt,linecolor=cyan](5.5847900445058745,6.4486413114077745)(6.1405296084254735,6.647005693728016)
	\psline[linewidth=0.8pt](6.1405296084254735,6.647005693728016)(7.706912908210799,7.206106891323531)
	\psline[linewidth=0.8pt](2.4490214371841574,6.454893959152483)(3.034371800253787,6.407724638325015)
	\psline[linewidth=0.8pt](3.3547356696196218,6.381908738841497)(3.815949767808518,6.344742697543533)
	\psline[linewidth=0.8pt](7.706912908210799,6.031196998630801)(6.151621755627318,6.156527111986529)
	\psline[linewidth=0.8pt,linecolor=cyan](3.815949767808518,6.344742697543533)(4.463994526861864,6.292521270721406)
	\psline[linewidth=0.8pt,linecolor=cyan](5.603586609210737,6.200689458637329)(6.151621755627318,6.156527111986529)
	\psline[linewidth=0.8pt,linecolor=red](5.603586609210737,6.200689458637329)(4.463994526861864,6.292521270721406)
	\psline[linewidth=0.8pt](3.2171542549463363,3.431075078413389)(4.0289336185377955,3.8352458741295363)
	\psline[linewidth=0.8pt,linecolor=cyan](4.0289336185377955,3.8352458741295363)(4.544466555221577,4.091920737500699)
	\psline[linewidth=0.8pt,linecolor=red](4.544466555221577,4.091920737500699)(5.422864509060439,4.52925976813065)
	\psline[linewidth=0.8pt,linecolor=cyan](5.422864509060439,4.52925976813065)(5.938397445744228,4.785934631501815)
	\psline[linewidth=0.8pt](3.2171542549463363,4.352322868313289)(4.026538144933626,4.32117259498423)
	\psline[linewidth=0.8pt,linecolor=cyan](4.026538144933626,4.32117259498423)(4.528244677256722,4.301863715963814)
	\psline[linewidth=0.8pt,linecolor=cyan](5.441229842739471,4.266726202353787)(5.8498086375070875,4.251001474828424)
	\psline[linewidth=0.8pt](2.4490214371841574,2.374473312675188)(3.054550058582813,2.3545108306510563)
	\psline[linewidth=0.8pt](3.3949988414513874,2.343287244402642)(7.706912908210799,2.2011362312127702)
	\psline[linewidth=0.8pt,linestyle=dotted](3.8268014558565473,2.329051993378296)(3.8268014558565473,3.469122079132818)
	\psline[linewidth=0.8pt,linestyle=dotted](3.8268014558565473,3.828665289815815)(3.8268014558565473,4.141311559974942)
	\psline[linewidth=0.8pt,linestyle=dotted](3.8268014558565473,4.485222457149981)(3.8268014558565473,5.642013656738753)
	\psline[linewidth=0.8pt,linestyle=dotted](3.8268014558565473,6.2360415700410945)(3.8268014558565473,6.)
	\psline[linewidth=0.8pt,linestyle=dotted](3.8268014558565473,6.533055526692265)(3.8268014558565473,7.)
	\psline[linewidth=0.8pt,linestyle=dotted](3.8268014558565473,7.2990388885821265)(3.8268014558565473,8.897222959756927)
	\psline[linewidth=0.8pt,linestyle=dotted](4.423618912155867,2.309376692621175)(4.423618912155867,3.8084614580867795)
	\psline[linewidth=0.8pt,linestyle=dotted](4.423618912155867,7.812075438449528)(4.423618912155867,8.010601751525368)
	\psline[linewidth=0.8pt,linestyle=dotted](4.423618912155867,8.877547658999806)(4.423618912155867,8.391110518254058)
	\psline[linewidth=0.8pt,linestyle=dotted](5.9383974457442275,9.3273029816195)(5.9383974457442275,8.67235612844483)
	\psline[linewidth=0.8pt,linestyle=dotted](5.9383974457442275,2.759132015240869)(5.9383974457442275,3.8603083612497224)
	\psline[linewidth=0.8pt](7.075193683313262,3.316638165006541)(6.307580117345769,2.9401861065571078)
	\psline[linewidth=0.8pt](3.2171542549463363,1.4245834270769735)(6.083280314111992,2.8301852853356464)
	\psline[linewidth=0.8pt,linestyle=dotted](5.543712152126153,8.840621508231553)(5.543712152126153,8.324935080562112)
	\psline[linewidth=0.8pt,linestyle=dotted](5.543712152126153,2.2724505418529235)(5.543712152126153,3.6622006922483377)
	\psline[linewidth=0.8pt,linestyle=dotted](7.075193683313262,8.752365861319891)(6.266985203049154,8.356005297175567)
	\psline[linewidth=0.8pt,linestyle=dotted](4.032075169073548,7.259963590722661)(3.2171542549463363,6.860311123390323)
	\psline[linewidth=0.8pt,linestyle=dotted](7.075193683313262,8.752365861319891)(7.075193683313262,7.87198865852776)
	\psline[linewidth=0.8pt,linestyle=dotted](7.075193683313262,7.492282292941773)(7.075193683313262,7.178611817022914)
	\psline[linewidth=0.8pt,linestyle=dotted](7.075193683313262,6.4191990858509405)(7.075193683313262,6.798905451436927)
	\psline[linewidth=0.8pt,linestyle=dotted](7.075193683313262,5.808367106430003)(7.075193683313262,3.316638165006541)
	\psline[linewidth=0.8pt](7.075193683313262,5.351925663686552)(6.2809742201172005,4.956497636306009)
	\psline[linewidth=0.8pt](5.938397445744228,4.785934631501815)(6.084326838872081,4.858590332772563)
	\psline[linewidth=0.8pt,linestyle=dotted](4.983665532141008,8.85908458361568)(4.983665532141008,8.449802693115132)
	\psline[linewidth=0.8pt,linestyle=dotted](4.983665532141008,8.268203996530529)(4.983665532141008,8.136132217196273)
	\psline[linewidth=0.8pt,linestyle=dotted](4.983665532141008,2.2909136172370497)(4.983665532141008,6.815414423853709)
	\psline[linewidth=0.8pt](6.064107858255647,4.242753868985137)(5.8498086375070875,4.251001474828424)
	\psline[linewidth=0.8pt](7.075193683313262,4.203840814006767)(6.262556954780726,4.235116277385979)
	\psline[linewidth=0.8pt](1.1417871745693557,1.136327912480684)(1.9507268229916772,3.3155122714959147)
	\psline[linewidth=0.8pt](9.54485413471143,3.3155122714959147)(8.735914486289108,1.136327912480684)
	\psline[linewidth=0.8pt](8.735914486289108,1.136327912480684)(1.1417871745693557,1.136327912480684)
	\psline[linewidth=0.8pt](1.9507268229916772,3.3155122714959147)(2.330433188577665,3.3155122714959147)
	\psline[linewidth=0.8pt](2.594576747246178,3.3155122714959147)(3.1228638645832043,3.3155122714959147)
	\psline[linewidth=0.8pt](7.547268472280798,3.3155122714959147)(6.309095541022144,3.3155122714959147)
	\psline[linewidth=0.8pt](9.54485413471143,3.3155122714959147)(7.827921003366094,3.3155122714959147)
	\psline[linewidth=0.8pt,linestyle=dotted](3.815949767808518,6.344742697543533)(3.815949767808518,7.016076779935211)
	\psline[linewidth=0.8pt,linestyle=dotted](4.463994526861864,6.292521270721406)(4.463994526861864,6.963855353113083)
	\psline[linewidth=0.8pt,linestyle=dotted](5.603586609210737,6.200689458637329)(5.603586609210737,6.872023541029006)
	\psline[linewidth=0.8pt,linestyle=dotted](6.151621755627318,6.156527111986529)(6.151621755627318,6.827861194378206)
	\psline[linewidth=0.8pt,linestyle=dotted](3.815949767808518,7.016076779935211)(4.463994526861864,6.963855353113083)
	\psline[linewidth=0.8pt,linestyle=dotted](4.463994526861864,6.963855353113083)(5.603586609210737,6.872023541029006)
	\psline[linewidth=0.8pt,linestyle=dotted](5.603586609210737,6.872023541029006)(6.151621755627318,6.827861194378206)
	\psline[linewidth=0.8pt,linestyle=dotted](4.423618912155867,6.0341755987056525)(4.423618912155867,5.362841516313975)
	\psline[linewidth=0.8pt,linestyle=dotted](5.5847900445058745,6.4486413114077745)(5.5847900445058745,5.777307229016097)
	\psline[linewidth=0.8pt,linestyle=dotted](3.826801455856547,5.149814887002624)(6.1405296084254735,5.975671611336339)
	\psline[linewidth=0.8pt,linecolor=red](4.423618912155867,6.0341755987056525)(5.5847900445058745,6.4486413114077745)
	\psline[linewidth=0.8pt,linecolor=red](4.528244677256722,4.301863715963814)(5.441229842739471,4.266726202353787)
	\begin{scriptsize}
	\psdots[dotsize=2pt 0,dotstyle=*](4.983665532141008,2.2909136172370497)
	\psdots[dotsize=1pt 0,dotstyle=*](0.494620070686265,3.529052040127685)
	\psdots[dotsize=1pt 0,dotstyle=*](0.494620070686265,10.097223006506315)
	\psdots[dotsize=1pt 0](2.4490214371841574,2.374473312675188)
	\psdots[dotsize=1pt 0,dotstyle=*](0.494620070686265,8.964779736441034)
	\psdots[dotsize=1pt 0](7.706912908210799,2.2011362312127702)
	\psdots[dotsize=1pt 0](3.2171542549463363,1.4245834270769735)
	\psdots[dotsize=1pt 0](7.075193683313262,3.316638165006541)
	\psdots[dotsize=1pt 0,dotstyle=*](2.4490214371841574,6.454893959152483)
	\psdots[dotsize=1pt 0,dotstyle=*](2.4490214371841574,5.329367386579625)
	\psdots[dotsize=1pt 0,dotstyle=*](7.706912908210799,7.206106891323531)
	\psdots[dotsize=1pt 0,dotstyle=*](7.706912908210799,6.031196998630801)
	\psdots[dotsize=1pt 0,dotstyle=*](3.826801455856547,5.821148969394302)
	\psdots[dotsize=1pt 0,dotstyle=*](6.1405296084254735,6.647005693728016)
	\psdots[dotsize=1pt 0,dotstyle=*](4.423618912155867,6.0341755987056525)
	\psdots[dotsize=1pt 0,dotstyle=*](5.5847900445058745,6.4486413114077745)
	\psdots[dotsize=1pt 0,dotstyle=*](4.463994526861864,6.292521270721406)
	\psdots[dotsize=1pt 0,dotstyle=*](5.603586609210737,6.200689458637329)
	\psdots[dotsize=1pt 0,dotstyle=*](6.151621755627318,6.156527111986529)
	\psdots[dotsize=1pt 0,dotstyle=*](3.815949767808518,6.344742697543533)
	\psdots[dotsize=1pt 0,dotstyle=*](3.2171542549463363,4.352322868313289)
	\psdots[dotsize=1pt 0,dotstyle=*](3.2171542549463363,3.431075078413389)
	\psdots[dotsize=1pt 0,dotstyle=*](7.075193683313262,5.351925663686552)
	\psdots[dotsize=1pt 0,dotstyle=*](7.075193683313262,4.203840814006767)
	\psdots[dotsize=1pt 0,dotstyle=*](4.0289336185377955,3.8352458741295363)
	\psdots[dotsize=1pt 0,dotstyle=*](4.544466555221577,4.091920737500699)
	\psdots[dotsize=1pt 0,dotstyle=*](5.422864509060439,4.52925976813065)
	\psdots[dotsize=1pt 0,dotstyle=*](5.938397445744228,4.785934631501815)
	\psdots[dotsize=1pt 0,dotstyle=*](4.026538144933626,4.32117259498423)
	\psdots[dotsize=1pt 0,dotstyle=*](5.441229842739471,4.266726202353787)
	\psdots[dotsize=1pt 0,dotstyle=*](4.528244677256722,4.301863715963814)
	\psdots[dotsize=1pt 0,dotstyle=*](5.8498086375070875,4.251001474828424)
	\end{scriptsize}
	\rput[tl](4.1477249668952805,7.310761855445174){$S_1$}
	\rput[tl](4.8211432922106585,7.293040320568453){$S_2$}
	\rput[tl](5.60089082678636,7.222154181061571){$S_3$}
	\rput[tl](4.023674222758237,5.45){$T_1$}
	\rput[tl](5.069244780484746,5.75){$T_2$}
	\rput[tl](5.671776966293242,5.915){$T_3$}
	\rput[tl](2.5705083628671574,7.935){$\pi_1$}
	\rput[tl](3.3,7.15){$\pi_2$}
	\rput[tl](1.6,1.55){$\pi$}
	\rput[tl](4.431269524922808,9.63){$C_A$}
	\rput[tl](3.45,9.667725994049002){$D_A$}
	\rput[tl](2.588229897743878,6.974052692787484){$c_1''$}
	\rput[tl](2.5527868279904373,5.999368274567855){$c_2''$}
	\rput[tl](3.3148128276894178,4.865190042457742){$c_2''$}
	\rput[tl](3.208483618429095,4.08544250788204){$c_1''$}
	\rput[tl](4.838864827087379,2.2){$A$}
	\rput[tl](4.094560362265119,6.735){$e_1$}
	\rput[tl](4.643927943443454,6.65){$e_2$}
	\rput[tl](5.6,6.45){$e_3$}
	\rput[tl](4.236332641278883,4.209493252019083){$f_1$}
	\rput[tl](4.679371013196895,4.42215167053973){$f_2$}
	\rput[tl](5.724941570923403,5.006962321471507){$f_3$}
	\rput[tl](4.041395757634958,6.176583623335061){$E_1$}
	\rput[tl](4.626206408566733,6.3183559023488245){$E_2$}
	\rput[tl](5.8,6.7){$E_3$}
	\rput[tl](3.9705096181280757,4.85){$F_1$}
	\rput[tl](4.555320269059852,4.77658236807414){$F_2$}
	\rput[tl](5.547726222156198,4.55){$F_3$}
	\rput[tl](5.0338017107313044,5.25){$\epsilon$}
	\end{pspicture*}
	\caption{The parts of $c_1'',c_2''$ over a neighborhood of each crossing $A$ of their projections are isolated via  two cylinders $C_A,D_A$, the first inside the second. Red edges are inside $C_A$, blue are between $C_A,D_A$.   We move $c_1''$ to a new position, by a p.l. isotopy fixing everything outside $D_A$ and pushing vertically everything inside $D_A$, so that the $e_2$ red arc assumes the position $E_2$. The blue arcs $e_1,e_3$ remain segments and retain their endpoints on the surface of $D_A$, but have moved their endpoints on the surface of $C_A$  vertically. The over/under relation of the red arcs $e_2,f_2$ has not changed, thus $(e_2\equiv E_2, f_2)$, is isotopic to $(E_2,F_2)$. We perform this isotopy vertically, in a similar way to the previous one, and now $c_1''$ is made to coincide with $c_2''$ over the neighborhood of the crossing $A$.}
	\label{figure_providing_a_rail_isotopy}
\end{figure}
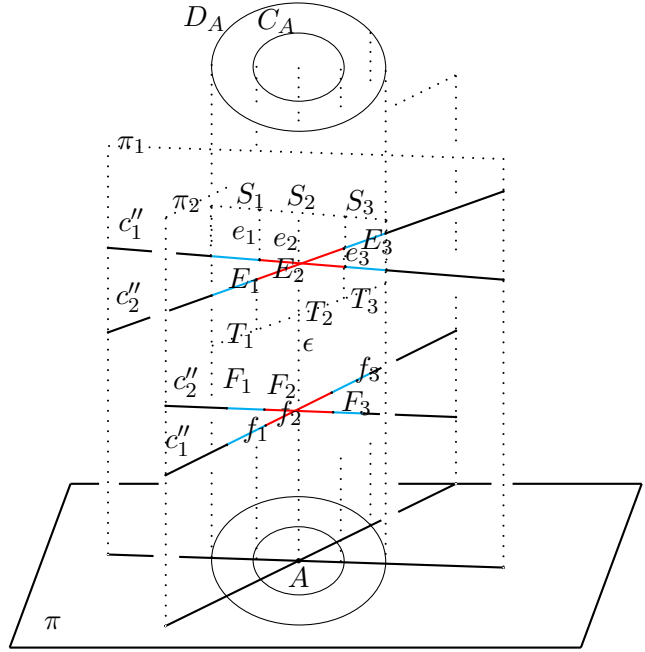
	
	If $A$ is a  crossing point of the common projection of $c_1'',c_2''$ on $\pi$ (Figure \ref{figure_providing_a_rail_isotopy}), then on the first arc there exist  consecutive edges $e_1,e_2,e_3$ on a line segment and also exist consecutive edges $f_1,f_2,f_3$ on another line segment, with the projections of  $e_2,f_2$ containing $A$. Let $E_1,E_2,E_3$ on a line segment, and $F_1,F_2,F_3$ on another line segment be the corresponding edges on the second arc of the edges $e_1,e_2,e_3$ and $f_1,f_2,f_3$ respectively. Let $\pi_1$ be the vertical plane to $\pi$ containing the edges $e_i,E_i$, and $\pi_2$ the vertical plane containing the corresponding edges $f_i,F_i$. Call $S_1,S_2,S_3$ the zones on $\pi_1$  that lie on and between the vertical lines passing from the endpoints of $e_i,E_i$ for $i=1,2,3$. And call $T_1,T_2,T_3$ the zones on $\pi_2$  that lie on and between the vertical lines passing from the endpoints of $f_i,F_i$ for $i=1,2,3$.   Because of our construction of the second subdivision above, a small solid infinite cylinder $C_A$ (surface union its interior) contains $S_2 \cup T_2$ and nothing more of $c_1'',c_2''$. Similarly, an infinite solid  cylinder $D_A$ contains $(\cup_i S_i) \cup (\cup_i T_i)$ and nothing more of $c_1'',c_2''$. Now, $e_2,f_2,E_2,F_2$ are segments intersecting the vertical line $\epsilon$ through $A$ at points $1,2,3,4$ respectively. Make $\epsilon$ an axis and compare relative positions of the points $1,2,3,4$ using their coordinates. For definiteness, let $1$ be above $2$ (i.e. have greater coordinate). Since $c''_{1pr},c''_{2pr}$ carry the same over/under data, $3$ should be above $4$ too. In $S_2 \cup T_2$ we perform a vertical (w.r.t. $\pi$) push in $S_2,T_2$ up until $e_2$ is made to coincide with $E_2$. Then $1$ coincides with $3$, and  the points $2,4$ of $\epsilon$ both lie below $1\equiv3$ ((a) of Figure \ref{figure_providing_a_rail_isotopy_2} cannot happen). Then the edges $f_2,F_2$ define in the zone $T_2$ on their plane $\pi_2$, a quadrangle $q$ either as edges or as diagonals. In both cases, we can push the edge $f_2$ vertically inside $q$ (Figure \ref{figure_providing_a_rail_isotopy_2} (b)), until it coincides with $F_2$ without disturbing edge $e_2$ that lies above.
	
	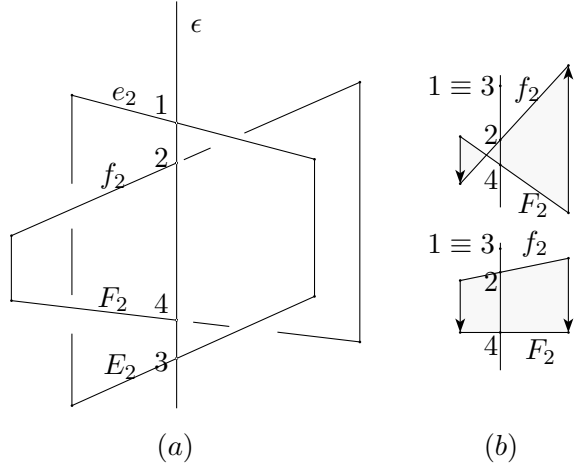
\begin{figure}[!h]
		\centering
		\newrgbcolor{eqeqeq}{0.8784313725490196 0.8784313725490196 0.8784313725490196}
		\psset{xunit=1.0cm,yunit=1.0cm,algebraic=true,dimen=middle,dotstyle=o,dotsize=5pt 0,linewidth=1.6pt,arrowsize=3pt 2,arrowinset=0.25}
		\begin{pspicture*}(0.6571315462090326,0.42817791707609426)(9.121245047781064,7.1806246808534615)
		\psaxes[labelFontSize=\scriptstyle,xAxis=true,yAxis=true,Dx=1.,Dy=1.,ticksize=-2pt 0,subticks=2]{->}(0,0)(0.6571315462090326,0.42817791707609426)(9.121245047781064,7.1806246808534615)
		\pspolygon[linewidth=0.pt,linecolor=eqeqeq,fillcolor=eqeqeq,fillstyle=solid,opacity=0.2](6.687223539907845,5.202000745421019)(8.124992565053862,4.187104962964994)(8.124992565053862,6.146232031910747)(6.687223539907845,4.573866162744054)
		\pspolygon[linewidth=0.pt,linecolor=eqeqeq,fillcolor=eqeqeq,fillstyle=solid,opacity=0.2](6.687223539907845,2.595242227311618)(6.687223539907845,3.2861902682562776)(8.124992565053862,3.582201538139282)(8.124992565053862,2.595242227311614)
		\psline[linewidth=0.4pt](1.5249378051406488,5.751266786293768)(4.750257152703776,4.9006331122111835)
		\psline[linewidth=0.4pt](1.5249378051406488,1.6221491600178881)(4.750257152703776,3.0753150199089703)
		\psline[linewidth=0.4pt](3.8177084836263337,5.2480342312551596)(5.352437551719306,5.924355515499525)
		\psline[linewidth=0.4pt](4.257437047067973,2.5995306248873975)(5.352437551719306,2.4696153107762213)
		\psline[linewidth=0.4pt](2.9171342742312705,7.)(2.9171342742312705,1.586706090264447)
		\psline[linewidth=0.4pt](0.7199450044767285,3.882918121799391)(0.7199450044767285,3.019233070618565)
		\psline[linewidth=0.4pt](5.352437551719306,5.924355515499525)(5.352437551719306,2.4696153107762213)
		\psline[linewidth=0.4pt](4.750257152703776,4.9006331122111835)(4.750257152703776,3.0753150199089703)
		\psline[linewidth=0.4pt](0.7199450044767285,3.882918121799391)(2.9171342742312705,4.8511710203353)
		\psline[linewidth=0.4pt](0.7199450044767285,3.019233070618565)(2.9171342742312705,2.7585495979358208)
		\psline[linewidth=0.4pt](1.5249378051406488,5.751266786293768)(1.5249378051406488,4.573866162744052)
		\psline[linewidth=0.4pt](1.5249378051406488,3.977138309200936)(1.5249378051406488,3.093036554785691)
		\psline[linewidth=0.4pt](1.5249378051406488,2.5795388627446902)(1.5249378051406488,1.6221491600178881)
		\psline[linewidth=0.4pt](3.6168595305039055,2.675531347191609)(3.136977518219329,2.7324665011914746)
		\psline[linewidth=0.4pt](3.0128269833462307,4.89334068875884)(3.368389023667661,5.050029045510659)
		\psline[linewidth=0.4pt](6.687223539907845,5.202000745421019)(8.124992565053862,4.187104962964994)
		\psline[linewidth=0.4pt](6.687223539907845,4.573866162744054)(8.124992565053862,6.146232031910747)
		\psline[linewidth=0.4pt](6.687223539907845,3.2861902682562776)(8.124992565053862,3.582201538139282)
		\psline[linewidth=0.4pt](6.687223539907845,2.595242227311618)(8.124992565053862,2.595242227311614)
		\psline[linewidth=0.4pt](7.221137935183261,6.018575702901075)(7.221137935183261,4.2597988714055735)
		\psline[linewidth=0.4pt](7.221137935183261,3.78869793439785)(7.221137935183261,2.092734561170046)
		\psline[linewidth=0.4pt]{->}(6.687223539907845,5.202000745421019)(6.687223539907845,4.573866162744054)
		\psline[linewidth=0.4pt]{->}(8.124992565053862,4.187104962964994)(8.124992565053862,6.146232031910747)
		\psline[linewidth=0.4pt]{->}(6.687223539907845,3.2861902682562776)(6.687223539907845,2.595242227311618)
		\psline[linewidth=0.4pt]{->}(8.124992565053862,3.582201538139282)(8.124992565053862,2.595242227311614)
		\begin{scriptsize}
		\psdots[dotsize=1pt 0,dotstyle=*](1.5249378051406488,5.751266786293768)
		\psdots[dotsize=1pt 0,dotstyle=*](4.750257152703776,4.9006331122111835)
		\psdots[dotsize=1pt 0,dotstyle=*](1.5249378051406488,1.6221491600178881)
		\psdots[dotsize=1pt 0,dotstyle=*](4.750257152703776,3.0753150199089703)
		\psdots[dotsize=1pt 0,dotstyle=*](0.7199450044767285,3.882918121799391)
		\psdots[dotsize=1pt 0,dotstyle=*](5.352437551719306,5.924355515499525)
		\psdots[dotsize=1pt 0,dotstyle=*](0.7199450044767285,3.019233070618565)
		\psdots[dotsize=1pt 0,dotstyle=*](5.352437551719306,2.4696153107762213)
		\psdots[dotsize=1pt 0](2.9171342742312705,2.249402514223554)
		\psdots[dotsize=1pt 0](2.9171342742312705,5.384094091148988)
		\psdots[dotsize=1pt 0](2.9171342742312705,4.8511710203353)
		\psdots[dotsize=1pt 0](2.9171342742312705,2.7585495979358208)
		\psdots[dotsize=1pt 0,dotstyle=*](7.221137935183261,5.157762990054394)
		\psdots[dotsize=1pt 0,dotstyle=*](7.221137935183261,4.82511999581484)
		\psdots[dotsize=1pt 0,dotstyle=*](7.221137935183261,3.3961138202247456)
		\psdots[dotsize=1pt 0,dotstyle=*](7.221137935183261,2.595242227311616)
		\psdots[dotsize=1pt 0,dotstyle=*](6.687223539907845,4.573866162744054)
		\psdots[dotsize=1pt 0,dotstyle=*](8.124992565053862,6.146232031910747)
		\psdots[dotsize=1pt 0,dotstyle=*](6.687223539907845,5.202000745421019)
		\psdots[dotsize=1pt 0,dotstyle=*](8.124992565053862,4.187104962964994)
		\psdots[dotsize=1pt 0,dotstyle=*](6.687223539907845,3.2861902682562776)
		\psdots[dotsize=1pt 0,dotstyle=*](6.687223539907845,2.595242227311618)
		\psdots[dotsize=1pt 0,dotstyle=*](8.124992565053862,3.582201538139282)
		\psdots[dotsize=1pt 0,dotstyle=*](8.124992565053862,2.595242227311614)
		\psdots[dotsize=1pt 0,dotstyle=*](7.221137935183261,5.877245421798759)
		\psdots[dotsize=1pt 0,dotstyle=*](7.221137935183261,3.7101811115632324)
		\end{scriptsize}
		\rput[tl](3.1,6.75663383754651){$\epsilon$}
		\rput[tl](2.62,5.75){$1$}
		\rput[tl](2.62,5.05){$2$}
		\rput[tl](2.62,2.33){$3$}
		\rput[tl](2.62,3.1){$4$}
		\rput[tl](2.054730992665268,5.85){$e_2$}
		\rput[tl](1.8976973469960279,4.85){$f_2$}
		\rput[tl](1.9448074406968,2.3){$E_2$}
		\rput[tl](1.8662906178621799,3.2){$F_2$}
		\rput[tl](7,5.35){$2$}
		\rput[tl](7,4.75){$4$}
		\rput[tl](7,3.4){$2$}
		\rput[tl](7,2.55){$4$}
		\rput[tl](6.25,6){$1\equiv 3$}
		\rput[tl](6.25,3.9){$1\equiv 3$}
				\rput[tl](7.4,6){$f_2$}
				\rput[tl](7.45,4.45){$F_2$}
					\rput[tl](7.5,3.95){$f_2$}
				\rput[tl](7.55,2.5){$F_2$}
		\rput[tl](2.65,1.25){$(a)$}
		\rput[tl](7,1.25){$(b)$}
		\end{pspicture*}
		\caption{The situation in (a) cannot happen because $c_1'',c_2''$ carry the same over/under data. After pushing $e_2$ to assume position $E_2$, we make $f_2$ to assume position $F_2$ by moving it on $\pi_2$ as shown in (b).}
		\label{figure_providing_a_rail_isotopy_2}
	\end{figure}
	
	We take care to perform the above vertical pushes as p.l. isotopies in space fixing everything on the surface and outside cylinder $D_A$. As a result, in their new positions $e_2,f_2$ coincide with $E_2,F_2$, and $e_1,e_3,f_1,f_3,E_1,E_3,F_1,F_3$ change their carrier lines, all projections on $\pi$ remain as before the isotopy. Performing for each crossing point such isotopies consecutively, and even all at once, we get a new $c_1''$, say $c_1'''$. Some of its vertices still may not coincide with those of $c''$. So at the exterior of the union of the $C_A$'s we perform a simultaneous vertical pushing of all vertices of $c_1'''$ that do not already coincide with their corresponding vertices of $c_2''$ until they do coincide. This push is not obstructed by edges crossing the moving ones, but it can in principal be obstructed by the rails. Nevertheless, for corresponding edges, say $e,f$ of $c_1''',c_2''$, the common projection to $\pi$ is for both an underpassing or an overpassing for the crossing rail. This means that both pairs of corresponding endpoints-vertices, form vertical segments that do not intersect the rails, and as parallel segments, they form a quadrilateral $q$ not containing the rails. So the push of such vertices takes place in these quadrilaterals and is not obstructed by the rails. As above we perform this push as a p.l. isotopy which does not disturb those vertices which were previously made to coincide with their corresponding of $c_2''$, and we are done.  
	\end{proof}

We can now prove the following:

\begin{theorem}
	Two rail arcs in $\mathbb{R}^3$ are rail isotopic iff their rail knotoid diagram projections on the plane $\pi$ of the rails are rail equivalent. In other words, rail isotopy in $\mathbb{R}^3$ corresponds to rail equivalence on $\pi$ (rail arcs are isotopic iff they correspond to the same rail knotoid).
\end{theorem}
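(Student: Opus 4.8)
The plan is to establish the two implications separately. The forward implication is essentially a bookkeeping consequence of the reduction already carried out, while the converse is a lifting argument that, at the last step, is fed into Lemma~\ref{lemma_arcs_with_same projection}. As a preliminary remark one should note that, since every rail arc is rail isotopic to one with a generic projection to $\pi$, the rail knotoid diagram attached to a rail arc is well defined up to rail equivalence: any two generic perturbations of a fixed rail arc are rail isotopic, so the forward implication (applied to them) shows their projections are rail equivalent. Thus it is harmless to assume throughout that $c_1,c_2$ already project generically.

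For the forward implication, suppose $c_1$ and $c_2$ are rail isotopic. By Corollary~\ref{corollary_nice_moves} they are related by a finite sequence of nice moves $\Delta_1,\ldots,\Delta_m$, where the $j$-th move is performed on the arc $(\Delta_{j-1}\circ\cdots\circ\Delta_1)(c_1)$. By the definition of a nice move, the projection $\Delta_{j,pr}$ of $\Delta_j$ on $\pi$ is a finite composition of Reidemeister, slide and planar isotopy moves between the projections of the arcs immediately before and after $\Delta_j$; and these are precisely the rail knotoid equivalence moves of Figure~\ref{figure_rail_moves}. Concatenating these compositions for $j=1,\ldots,m$ exhibits a finite sequence of rail knotoid equivalence moves carrying $c_{1pr}$ to $c_{2pr}$, so the two rail knotoid diagrams are rail equivalent.

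For the converse, suppose $c_{1pr}$ and $c_{2pr}$ are rail equivalent, say joined by a finite sequence of rail knotoid equivalence moves $D_0=c_{1pr}\to D_1\to\cdots\to D_n=c_{2pr}$. The key point is that each such move lifts to a rail isotopy: if $\tilde D$ is a rail arc with $\tilde D_{pr}=D_{i-1}$, then the move $D_{i-1}\to D_i$ is supported in a small disk $d\subset\pi$ over which $\tilde D$ consists only of the standard strands appearing in the move picture, so we may replace the portion of $\tilde D$ lying over $d$ by the standard spatial model realizing $D_i$ (a triangle move for the $\Omega$-type moves, a space slide move for a slide move, an isotopy supported over $d$ for a planar isotopy), keeping $\tilde D$ fixed outside the vertical cylinder over $d$. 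This is a rail isotopy: for the $\Omega$-type and planar moves the moving strands run strictly above or below the rail (which lies in $\pi$), so the interior of the arc never meets a rail, while a slide move lifts exactly to a space slide move; and the resulting rail arc projects pointwise onto $D_i$. Iterating from $\tilde D=c_1$ produces a rail arc $c_2'$, rail isotopic to $c_1$, with $c'_{2pr}=c_{2pr}=D_n$ pointwise. Lemma~\ref{lemma_arcs_with_same projection} then furnishes a rail isotopy between $c_2'$ and $c_2$, and composing the two rail isotopies shows $c_1$ is rail isotopic to $c_2$, as required; the final parenthetical assertion is the combination of the two implications.

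The main obstacle is the lifting step in the converse: one must verify, case by case through Figure~\ref{figure_rail_moves}, that a local three-dimensional model exists with the correct strands and over/under data on the boundary cylinder over $d$, and that performing it is genuinely a rail isotopy --- in particular that it does not drive the interior of the arc onto a rail and that, by the locality of the move, the rest of the arc (which does not enter the cylinder over $d$) is left untouched. This is the usual ``Reidemeister moves lift to space'' argument adapted to the presence of the rails; the genuinely new cases are the rail $\Omega$-moves, the slide moves and the rail planar isotopies, and each is dispatched exactly as indicated above.
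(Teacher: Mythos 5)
Your proposal is correct and follows essentially the same route as the paper: the forward direction via Corollary \ref{corollary_nice_moves} (nice moves project to the generating rail knotoid moves), and the converse by lifting each diagrammatic move to a spatial rail isotopy supported locally, then invoking Lemma \ref{lemma_arcs_with_same projection} to pass from the lifted arc sharing the projection $c_{2pr}$ to $c_2$ itself. The preliminary remark on well-definedness up to generic projection and the explicit cylinder-over-a-disk formulation of the lifting step are just slightly more detailed phrasings of what the paper does.
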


\begin{proof}

	$(\Rightarrow)$   If  $c_1,c_2$ are rail arcs which are rail isotopic in $\mathbb{R}^3$, then by Corollary \ref{corollary_nice_moves} there exists a rail isotopy between them expressed by a finite sequence of nice triangle moves. By definition, the projection of any such move is a Reidemeister or slide move or a planar isotopy move. Thus $c_{pr1},c_{pr2}$ are equivalent rail knotoid diagram projections as wanted.
	
	$(\Leftarrow)$  Let for two arcs $c_1,c_2$  in $\mathbb{R}^3$, their corresponding rail knotoid diagram projections  $c_{1pr},c_{2pr}$ on $\pi$ differ by a single Reidemeister move, slide move or planar isotopy move $\delta$. In each case one can readily check that there exist a few obvious triangles $\Delta_1,\ldots$ in space so that for their projected moves on $\pi$ it is $(\cdots \circ \Delta_1)_{pr}=\delta$ wehereas in space $(\cdots \circ \Delta_1)(c_1)$ coincides with $c_2$, thus $c_1,c_2$ are rail isotopic as wanted. Let us notice that no matter what kind of move $\delta$ is, the $\Delta_i$'s indeed provide us  with triangle moves in space: when the rails are not involved in $\delta$ the result is immediate, and whenever a part of a rail is involved in $\delta$, the $\Delta_i$'s either do not have common points with the rails, or they have a whole side on a rail, thus by the definition of a triangle move, we get the required result.

	In the general case of two rail arcs $c_1,c_2$ in $\mathbb{R}^3$ with equivalent rail knotoid diagram projections $c_{1pr},c_{2pr}$ on $\pi$, we note that $c_{2pr}$ comes from $c_{1pr}$ via a finite sequence of Reidemeister moves, slide moves or planar isotopies. Let on $\pi$ be $d_0=c_{1pr},d_1,\ldots,d_k=c_{2pr}$ the diagrams obtained consecutively by such a sequence of moves, and let in space, $f_0=c_1,f_1,\ldots,f_k$ be arcs with projections the $d_i$'s respectively. Then by what we proved just above, there exist a space isotopy between any $f_i$ and the next one $f_{i+1}$, thus there exists such an isotopy between $c_1=f_0$ and $f_k$. But $f_k$ and $c_2$ share the same projection $c_{2pr}$ on $\pi$. Thus Lemma \ref{lemma_arcs_with_same projection} assures the existence of an isotopy between $f_k$ and $c_2$, and we finally get a desired isotopy between $c_1$ and $c_2$.
\end{proof}

\section{Rail knotoids and theta-curves}

Closing this article, it is worth mentioning that a rail arc together with the two rails is a kind of a trivalent graph  embedded in the 3-space $\mathbb{R}^3$, containing the end points of the rail arc as its only two vertices, the rail arc as an edge, and the  four half-lines  of the rails emanating from the two vertices, as infinitely extended edges. Clearly there is a direct connection of these graphs with the $\theta$-curves, where a theta-curve is  a graph with the form of the Greek letter $\theta$, embedded in the 3-sphere or the 3-space as a trivalent graph with only two vertices and exactly three edges (upper, middle, lower) each one of which joins the two vertices \cite{Wol}, \cite{Kau}  \cite{Yam}. From our infinitely extended trivalent graph, one gets a theta-curve as in Figure \ref{figure_theta-curves}, where the two horizontal line segments joining the two rails are chosen far away from the rail arc (say outside a 3-disk containing the rail arc), one segment on either side  (on each rail, the corresponding endpoint of the rail arc lies between the endpoints of the two horizontal segments). Conversely, from a $\theta$-curve we can get a rail knotoid as follows: from each one of the upper and lower edges of the $\theta$-curve, we first delete an arc from its interior leaving two small disjoint arcs touching one vertex each, then  we make the two arcs around each vertex into a line segment, and finally make the two segments parallel  extending them indefinitely to form the parallel lines of the rails; the middle edge of the theta-curve becomes a rail arc.  These remarks suggest the possibility of exchanging information between the theory of rail knotoids and the theory of theta-curves. Theta-curves exhibit a richness of properties and recently, Turaev has connected them to the spherical knotoids as well \cite{Tu}. 

	\begin{figure}[!h]
	\centering
	\psset{xunit=1.0cm,yunit=1.0cm,algebraic=true,dimen=middle,dotstyle=o,dotsize=5pt 0,linewidth=1.6pt,arrowsize=3pt 2,arrowinset=0.25}
	\begin{pspicture*}(5.360057220771243,2.553288575002056)(7.778380719300864,7.389935572061283)
	\psaxes[labelFontSize=\scriptstyle,xAxis=true,yAxis=true,Dx=0.5,Dy=0.5,ticksize=-2pt 0,subticks=2]{->}(0,0)(5.360057220771243,2.553288575002056)(7.778380719300864,7.389935572061283)
	\psline[linewidth=0.8pt,linecolor=red](5.7897163177173985,3.579614158436346)(5.7897163177173985,6.007590970387144)
	\psline[linewidth=0.8pt,linecolor=red](7.348194144442746,3.579614158436346)(7.348194144442746,4.56234264157902)
	\psline[linewidth=0.8pt,linecolor=red](7.348194144442746,4.750538252321563)(7.348194144442746,6.007590970387144)
	\parametricplot[linewidth=0.8pt]{1.3124896428446133}{2.0491738013646073}{1.*2.0441623491706924*cos(t)+0.*2.0441623491706924*sin(t)+6.730724779917389|0.*2.0441623491706924*cos(t)+1.*2.0441623491706924*sin(t)+3.123699349255933}
	\parametricplot[linewidth=0.8pt]{-0.897963385319354}{1.212895220174818}{1.*0.21069737658829024*cos(t)+0.*0.21069737658829024*sin(t)+7.336666585591969|0.*0.21069737658829024*cos(t)+1.*0.21069737658829024*sin(t)+4.843622530429749}
	\parametricplot[linewidth=0.8pt]{3.456529644292434}{5.190203848893322}{1.*0.643976505465421*cos(t)+0.*0.643976505465421*sin(t)+7.171847961037518|0.*0.643976505465421*cos(t)+1.*0.643976505465421*sin(t)+5.250697016198361}
	\parametricplot[linewidth=0.8pt]{1.727268012851006}{2.949250896956655}{1.*0.9302669017999806*cos(t)+0.*0.9302669017999806*sin(t)+7.493161332981229|0.*0.9302669017999806*cos(t)+1.*0.9302669017999806*sin(t)+5.088688895717715}
	\psline[linewidth=0.8pt,linecolor=red](5.7897163177173985,5.966145079751118)(5.7897163177173985,6.562810939433099)
	\psline[linewidth=0.8pt]{->}(0.,7.532357133936048)(3.868811445485548,7.532357133936048)
	\psline[linewidth=0.8pt]{->}(3.868811445485548,7.532357133936048)(0.,7.532357133936048)
	\rput[tl](5.816344673324002,3.064330521861144){$\ell_1$}
	\rput[tl](7.413350757258657,3.036953274707979){$\ell_2$}
	\psline[linewidth=0.8pt,linestyle=dotted,linecolor=red](5.7897163177173985,6.562810939433099)(5.7897163177173985,7.2367330934889305)
	\psline[linewidth=0.8pt](5.7897163177173985,3.579614158436346)(7.348194144442746,3.579614158436346)
	\psline[linewidth=0.8pt,linestyle=dotted,linecolor=red](5.7897163177173985,3.579614158436346)(5.7897163177173985,2.9808685315747065)
	\psline[linewidth=0.8pt](7.348194144442746,6.562810939433099)(5.7897163177173985,6.562810939433099)
	\psline[linewidth=0.8pt,linestyle=dotted,linecolor=red](7.348194144442746,7.2367330934889305)(7.348194144442746,6.562810939433099)
	\psline[linewidth=0.8pt,linecolor=red](7.348194144442746,6.562810939433099)(7.348194144442746,6.007590970387144)
	\psline[linewidth=0.8pt,linestyle=dotted,linecolor=red](7.348194144442746,3.579614158436346)(7.348194144442746,2.9808685315747065)
	\begin{scriptsize}
	\psdots[dotsize=2pt 0,dotstyle=*](7.348194144442746,6.007590970387144)
	\psdots[dotsize=2pt 0,dotstyle=*](5.7897163177173985,4.938390172463857)
	\psdots[dotsize=1pt 0,dotstyle=*](3.868811445485548,7.532357133936048)
	\psdots[dotsize=3pt 0,dotstyle=*,linecolor=darkgray](0.,7.532357133936048)
	\end{scriptsize}
	\end{pspicture*}
	\caption{The solid red segments on the two rails, together with the two horizontal line segments joining the two rails and also together with the pictured rail arc, form a theta-curve with the endpoints of the rail arc as its vertices, and  the rail knotoid as its middle edge.}
	\label{figure_theta-curves}
\end{figure}
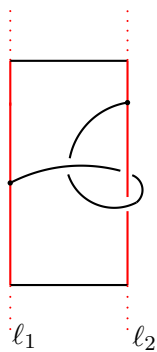

\end{document}